\theoremstyle{plain}
  \newtheorem{theorem}{Theorem}[section]
  \newtheorem{lemma}[theorem]{Lemma}
  \newtheorem{corollary}[theorem]{Corollary}
  \newtheorem{proposition}[theorem]{Proposition}
  \newtheorem{claim}[theorem]{Claim}
\theoremstyle{definition}
  \newtheorem{definition}[theorem]{Definition}
\theoremstyle{remark}
  \newtheorem{remark}[theorem]{Remark}
  \newtheorem*{ack}{Acknowledgment}
\numberwithin{equation}{section}
\def\supp{\mathop{\mathrm{supp}}\nolimits}
\begin{document}
\title[]{Relaxation of optimal transport problem \\ via strictly convex functions}
\author[]{Asuka Takatsu$^{\ast\dagger\ddagger}$}
\address{\hspace{-10pt}\scriptsize$\ast$ Department of Mathematical Sciences, Tokyo Metropolitan University, Tokyo {192-0397}, Japan.}
\email{asuka@tmu.ac.jp}
\address{\hspace{-10pt}\scriptsize$\dagger$ Mathematical Institute, Tohoku University, Sendai 980-8578, Japan.}
\address{\hspace{-10pt}\scriptsize$\ddagger$ RIKEN Center for Advanced Intelligence Project (AIP), Tokyo 103-0027, Japan.}
\date{\today}
\keywords{optimal transport,  entropic relaxation, Bregman divergences.}
\subjclass[2020]{49Q22,90C25}
%%%%%%
%%%%%%%%%%%%%%%%%%%%%%%%%%%%%%%%%%%%%%%%
\maketitle
%%%%%%%%%%%%%%%%%%%%%%%%%%%%%%%%%%%%%%%%
\vspace{-10pt}
\begin{abstract}
An optimal transport problem on finite spaces is a linear program.
Recently, a relaxation of the optimal transport problem via strictly convex functions,  
especially via the Kullback--Leibler divergence,  sheds new light on data sciences.
This paper provides the mathematical foundations and an iterative process based on a gradient descent
for the relaxed optimal transport problem via Bregman divergences.
\end{abstract}
%%%%%%%%%%%%%%%%%%%%%%%%%%%%%%%%%%%%%%%%

%%%%%%%%%%%%%%%%%%%%%%%%%%%%%%%%%%%%%%%%
\section{Introduction}
A  \emph{optimal transport problem} discussed in this paper is a variational problem as follows.
Given $C=(c_{ij})_{1\leq i,j \leq N}\in M_{N}(\mathbb{R})$ and $ x=(x_i)_{i=1}^N, y=(y_j)_{j=1}^N \in \mathbb{R}^N$ with 
\[
x_i, y_j \geq 0 \quad \text{for}\ 1\leq i,j\leq N\quad \text{and}\quad \sum_{i=1}^Nx_i=\sum_{i=1}^Ny_i=1,
\]
find $\Pi =(\pi_{ij})_{1\leq i,j \leq N}\in M_N(\mathbb{R})$ minimizing 
\[
\sum_{i,j=1}^N c_{ij} \pi_{ij} 
\]
under the constraints 
\begin{equation}\label{cc}
\pi_{ij}\geq 0, \quad
\sum_{k=1}^N\pi_{ik}=x_i, \quad 
\sum_{k=1}^N \pi_{kj}=y_j, \quad \text{for $1\leq i,j\leq N$}.
\end{equation}
Since this variational problem is a linear program,  a minimizer may lie on the boundary of the constraint set and not be unique.
Furthermore,  a  gradient descent is not useful to find a minimizer.
%

%%%
In data sciences,  a relaxation of the optimal transport problem via strictly convex functions achieves substantial success,
where one of pioneering works is a fast algorithm for the relaxed transport problem via the Kullback--Leibler divergence proposed by Cuturi~\cite{Cuturi}. 
The \emph{Kullback--Leibler divergence} between  $\Pi\in M_N(R)$ satisfying~\eqref{cc}  and $x\otimes y$ is defined by
\[
\mathrm{KL}(\Pi, x\otimes y):=\sum_{\substack{1\leq i,j\leq N, \pi_{ij}\neq0}} \pi_{ij} \log \frac{\pi_{ij}}{x_i y_j}.
\]
The fast algorithm is called \emph{Sinkhorn's algorithm} 
since the convergence of this algorithm is attributed  to an iterative process by Sinkhorn~\cites{sinkhorn64}, \cite{sinkhorn} 
(for historical perspective, see~\cite{PC}*{Remark~4.5} and the references therein).
We notice that  if we run  Sinkhorn's  iteration and stop at the finite step,
then the output  $\Pi\in M_N(\mathbb{R})$ may not satisfy~\eqref{cc}.
%%%%%%%%%%%%%

%%%
As another strictly convex functions,   we focus attention on a Bregman divergence, 
which is a generalization of the Kullback--Leibler divergence via a strictly convex, continuous function $f:[0,1]\to \mathbb{R}$ with  $f\in C^1((0,1])$.
The \emph{Bregman divergence} associated to  $f$ between $\Pi\in M_N(R)$ satisfying~\eqref{cc}  and $x\otimes y$ is defined by 
\[
\mathcal{D}_f(\Pi, x\otimes y):=\sum_{\substack{1\leq i,j\leq N, x_iy_j\neq0}}
\Big(f(\pi_{ij})-f(x_iy_j)-f'(x_iy_j)\left(\pi_{ij}-x_iy_j\right)\Big).
\]
If $f(r)=r \log r$,  where by convention $0 \log 0:=0$,
then $\mathcal{D}_f(\Pi, x\otimes y)=\mathrm{KL}(\Pi, x\otimes y)$.
We notice that Sinkhorn's iterative process is not applicable for Bregman divergences other than  the Kullback--Leibler divergence.
In Section~4, we provide an iterative process to find the relaxed minimizer via  Bregman divergences,
where  the output $\Pi\in M_N(\mathbb{R})$ always satisfies~\eqref{cc} even if we stop the iteration at the finite step
(Theorem~\ref{descent}, Corollary~\ref{cordescent}).
The iterative process is  based on a gradient descent.
%%%

%%%
For a relaxed optimal transport problem, although there are a lot of successful applications, 
mathematical argument is sometimes not rigorous.
After a brief review of the optimal transport problem in Section~2,
we provide the mathematical foundations of the relaxed  optimal transport problem via Bregman divergences  in Section~3.
We first  provide a criterion for a strictly convex function $f$ such that  a relaxed minimizer lies in the interior of the constraint set (Lemma~\ref{int}).
Then  we prove the continuity (Theorem~\ref{dconti}) and the monotonicity (Theorem~\ref{mono}) in the relaxed  optimal transport problem.
Moreover,  we justify a dual relaxed  optimal transport problem %, which in the case of the Kullback--Leibler divergence is asserted in~\cite{Cuturi} 
(Theorem~\ref{inc}).
%%%

%%%%%%%%%%%%%%
\section{Optimal transport problem}
We briefly recall some notions in the optimal transport problem.
When it will introduce no confusion, 
we shall use the same notation $\langle \cdot, \cdot \rangle$ for  the standard inner product on Euclidean space and  the Frobenius inner product on the space of matrices of a fixed size.
The norm induced from $\langle \cdot, \cdot \rangle$ is denoted by $\|\cdot\|_2$.

%%%
For $N\in \mathbb{N}$ with $N\geq 2$, define 
\begin{align*}
\mathcal{P}_N&:=\left\{ x=(x_i)_{i=1}^N\in \mathbb{R}^N\ \Big|\  x_i \geq 0 \quad \text{for}\ 1\leq i\leq N, \quad\sum_{i=1}^Nx_i=1 \right\},\\
\mathcal{P}_{N \times N}&:=\left\{ \Pi=(\pi_{ij})_{i,j=1}^N \in M_{N}(\mathbb{R}) \ \Big|\  \pi_{ij} \geq 0\quad  \text{for}\ 1\leq i,j \leq N, \quad\sum_{i,j=1}^N \pi_{ij}=1 \right\}.\end{align*}
We call $\Pi\in \mathcal{P}_{N \times N}$ a \emph{coupling} (or \emph{transport plan}) between $x,y\in \mathcal{P}_N$ if 
\begin{equation}\label{constraints}
\sum_{k=1}^N\pi_{ik}=x_i, \quad 
\sum_{k=1}^N \pi_{kj}=y_j, \quad \text{for $1\leq i,j\leq N$}.
\end{equation}
We denote by  $\Pi(x,y)$ the set of couplings between  $x,y\in \mathcal{P}_N$.
Then $ \Pi(x,y)$ is nonempty due to $x \otimes y \in \Pi(x,y)$,  where  $x \otimes y$ is the outer product of $x,y$, that is, 
\[
(x \otimes y)_{ij}=x_i y_j \quad \text{for $1\leq i,j\leq N$}.
\] 
It is easy to see that $\Pi(x,y)$  is a convex compact subset of $(M_{N}(\mathbb{R}), \|\cdot\|_2)$.

%%%
For $x\in \mathcal{P}_N$ and $\Pi \in \mathcal{P}_{N \times N}$, define 
\[
\supp x:=\{ i \ |\ 1\leq i\leq N, \ x_i\neq 0\}, \quad
\supp\Pi:=\{ (i,j) \ |\ 1\leq i,j\leq N, \pi_{ij}\neq 0\}.
\]
\begin{lemma}\label{supp}
For  $x,y \in \mathcal{P}_N$ and $\Pi \in \Pi(x,y)$, 
\[
\supp x \times \supp y =\supp x \otimes  y, \qquad\supp\Pi \subset  \supp x \otimes  y.
\]
\end{lemma}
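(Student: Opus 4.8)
The plan is to prove the two assertions separately, both by unwinding the definitions of the support sets.

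For the first equality, $\supp x \times \supp y = \supp (x\otimes y)$, I would argue by double inclusion — or more efficiently, observe that for a pair $(i,j)$ with $1\leq i,j\leq N$, the condition $(x\otimes y)_{ij}\neq 0$ means $x_i y_j \neq 0$, and since $x_i, y_j$ are real numbers, this holds if and only if $x_i\neq 0$ \emph{and} $y_j\neq 0$, i.e. if and only if $i\in\supp x$ and $j\in\supp y$. That is exactly the statement that $(i,j)\in\supp x\times\supp y$. This part is essentially immediate and uses nothing beyond the fact that a product of reals vanishes iff one factor does.

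For the inclusion $\supp\Pi\subset\supp(x\otimes y)$, suppose $(i,j)\in\supp\Pi$, so $\pi_{ij}>0$ (using $\Pi\in\mathcal{P}_{N\times N}$, hence $\pi_{ik}\geq 0$ for all $k$). From the marginal constraint \eqref{constraints} we get $x_i=\sum_{k=1}^N \pi_{ik}\geq \pi_{ij}>0$, so $i\in\supp x$; symmetrically, $y_j=\sum_{k=1}^N \pi_{kj}\geq \pi_{ij}>0$, so $j\in\supp y$. Hence $(i,j)\in\supp x\times\supp y=\supp(x\otimes y)$ by the first part. Conversely, if $(i,j)\notin\supp(x\otimes y)$, then $x_i=0$ or $y_j=0$; in the first case $0=x_i=\sum_k\pi_{ik}$ with all summands nonnegative forces $\pi_{ij}=0$, and similarly in the second case, so $(i,j)\notin\supp\Pi$.

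I do not anticipate any real obstacle here: the only mild subtlety is remembering to invoke nonnegativity of the entries of $\Pi$ (which is built into membership in $\mathcal{P}_{N\times N}$, guaranteed since $\Pi\in\Pi(x,y)\subset\mathcal{P}_{N\times N}$) so that a single entry of a row or column sum is dominated by that sum. Everything else is just chasing the definitions of $\supp$ and of the outer product.
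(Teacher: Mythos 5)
Your proposal is correct and follows essentially the same route as the paper: the first equality is immediate from the fact that a product of reals vanishes iff a factor does, and the inclusion $\supp\Pi\subset\supp x\otimes y$ comes from bounding $\pi_{ij}$ by the row and column sums $x_i$ and $y_j$ using nonnegativity of the entries. The extra contrapositive paragraph at the end is redundant but harmless.
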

\begin{proof}
It is trivial that $\supp x \times \supp y =\supp x \otimes  y$. 
For $(i, j)\in \supp\Pi $, we find that 
\[
0<\pi_{ij}\leq \sum_{k=1}^N \pi_{ik}=x_{i}, \qquad
0< \pi_{ij}\leq \sum_{k=1}^N\pi_{ik}=y_{j},
\]
which implies $(i, j)\in   \supp x \otimes  y$.
\end{proof}

%%%
For $C=(c_{ij})_{1\leq i,j \leq N}\in M_{N}(\mathbb{R})$, 
we define a function $\mathcal{C}:\mathcal{P}_N \times \mathcal{P}_N  \to \mathbb{R}$ by 
\begin{equation}\label{wass}
\mathcal{C}(x,y)
:=\inf_{\Pi \in \Pi(x,y)} \langle C, \Pi\rangle
=\inf_{\Pi \in \Pi(x,y)} \left( \sum_{i,j=1}^N c_{ij} \pi_{ij} \right).
\end{equation}
Since  $\Pi(x,y)$ is compact and  the function  on $(\Pi(x,y), \|\cdot\|_2)$ sending $\Pi$ to  $\langle C, \Pi\rangle$ is continuous, 
there exists a coupling $\Pi\in \Pi(x,y)$ attaining the infimum in~\eqref{wass}.
Such a coupling is called an \emph{optimal coupling} between~$x,y$. 
Throughout this paper,
we choose a matrix $C\in M_{N}(\mathbb{R})$ arbitrarily and fix it unless otherwise indicated.
%%%%

%%%%%
The following characterization of optimal couplings is well-known.
Although the proof in the case of finite spaces is easy, the direct proof is less common.
For the sake of completeness, we give a direct proof.
%%%
Let  $\mathfrak{S}_M$ be the set of permutations on $M$-letters.
\begin{definition}
A subset $S \subset \{1, \cdots, N\}^2$ is called \emph{$C$-cyclically monotone}
if 
\begin{equation}\label{monono}
\sum_{m=1}^M c_{i_m j_m}
\leq
\sum_{m=1}^M c_{i_{\sigma(m)}j_m}.
\end{equation}
holds for
any family $\{(i_m, j_m)\}_{m=1}^M$ of points in $S$ and any $\sigma\in \mathfrak{S}_M$.
\end{definition}
%%%
It is easy to see that a subset of a $C$-cyclically monotone set is $C$-cyclically monotone. 
%%%
\begin{proposition}{\rm (cf.\,\cite{Vi}*{Theorem~5.10})}\label{cmono}
Given $x,y \in \mathcal{P}_N$, $\Pi \in \Pi(x,y)$ is optimal if and only if $\supp\Pi$ is $C$-cyclically monotone.
\end{proposition}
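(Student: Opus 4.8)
The plan is to prove both implications directly, exploiting the finiteness of the space to avoid the measure-theoretic machinery of the general case.

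For the ``only if'' direction, suppose $\Pi \in \Pi(x,y)$ is optimal and pick any family $\{(i_m, j_m)\}_{m=1}^M$ of points in $\supp\Pi$ and any $\sigma \in \mathfrak{S}_M$. The idea is to construct a competitor coupling $\widetilde{\Pi}$ by shifting a small amount of mass: set $\varepsilon := \min_{1 \leq m \leq M} \pi_{i_m j_m} > 0$ (or a suitable fraction thereof, to handle repeated indices in the family), and define $\widetilde{\Pi}$ by subtracting $\varepsilon$ (appropriately weighted by the multiplicity with which each pair appears) from each entry $\pi_{i_m j_m}$ and adding it to $\pi_{i_{\sigma(m)} j_m}$. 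One checks that $\widetilde{\Pi}$ has nonnegative entries and the same row and column sums as $\Pi$, hence $\widetilde{\Pi} \in \Pi(x,y)$; optimality of $\Pi$ gives $\langle C, \Pi \rangle \leq \langle C, \widetilde{\Pi}\rangle$, which rearranges precisely to~\eqref{monono}. The bookkeeping when some pair $(i,j)$ occurs several times in the family, or when $i_{\sigma(m)} = i_{m'}$ for various $m'$, is the one genuinely fiddly point; I would handle it by first reducing to the case where all the $(i_m, j_m)$ are distinct (a family with repeats can be split, or the inequality for it follows from instances with distinct points), or by simply carrying the multiplicities through the mass-shift carefully.

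For the ``if'' direction, suppose $\supp\Pi$ is $C$-cyclically monotone but, for contradiction, $\Pi$ is not optimal; let $\Pi^* \in \Pi(x,y)$ be an optimal coupling, so $\langle C, \Pi^* \rangle < \langle C, \Pi\rangle$. Consider $\Delta := \Pi^* - \Pi$, whose row and column sums all vanish. The key combinatorial fact is that such a $\Delta$ can be decomposed into a nonnegative combination of ``cycles'': signed matrices supported on the edges of a closed cycle in the bipartite graph, alternating $+1$ and $-1$. Concretely, one shows that whenever $\Delta \neq 0$ there is a cycle $(i_1, j_1), (i_2, j_1), (i_2, j_2), \dots, (i_M, j_M), (i_1, j_M)$ along which $\Delta$ is positive on the ``$\Pi^*$ side'' edges $(i_m, j_m)$ and negative on the ``$\Pi$ side'' edges $(i_{m+1}, j_m)$; subtracting a small multiple of this cycle from $\Delta$ reduces the number of nonzero entries, so by induction $\Delta = \sum_\ell \lambda_\ell Z_\ell$ with $\lambda_\ell > 0$ and each $Z_\ell$ such a cycle. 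Since $\langle C, \Delta \rangle < 0$, some cycle $Z_\ell$ satisfies $\langle C, Z_\ell \rangle < 0$. But the negative edges of that cycle lie in $\supp\Pi$ (that is how the cycle was extracted from $\Delta$), and $\langle C, Z_\ell\rangle < 0$ is exactly a violation of~\eqref{monono} for the family $\{(i_{m+1}, j_m)\}$ with the cyclic permutation sending $j_m$'s partner $i_{m+1}$ to $i_m$ — contradicting $C$-cyclic monotonicity of $\supp\Pi$.

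The main obstacle is the cycle-decomposition lemma for $\Delta$ in the second direction: making precise that a matrix with zero row and column sums decomposes into alternating bipartite cycles, and tracking which edges land in $\supp\Pi$. I expect to prove it by induction on the size of the support of $\Delta$, using that a nonzero $\Delta$ with vanishing marginals must contain a closed alternating walk (start at any nonzero entry, follow a row to another nonzero entry of opposite sign, then a column, and so on; finiteness forces a repeat). Once that lemma is in hand, both directions are short; the first direction's multiplicity bookkeeping is a minor nuisance by comparison.
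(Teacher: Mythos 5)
Your proposal is correct, and while the first half matches the paper, the second half takes a genuinely different route. The ``only if'' direction is the same mass-shifting competitor argument as the paper's (the paper runs it contrapositively with $\varepsilon=\min_{1\le m\le M}\pi_{i_m j_m}$, and is in fact no more careful than you are about repeated pairs, so your multiplicity bookkeeping is if anything an improvement). For the ``if'' direction, however, the paper never compares $\Pi$ with an optimal competitor: it builds a dual potential $\xi_i:=\sup\sum_{m}\left(c_{i_m j_m}-c_{i_{m+1}j_m}\right)$ over chains in $\supp\Pi$ (the Rockafellar-type construction), takes the $C$-transform $\xi^C_j=\min_i\{\xi_i+c_{ij}\}$, shows $\xi^C_j-\xi_i\le c_{ij}$ everywhere with equality on $\supp\Pi$, and concludes optimality by weak duality; this is precisely what lets the paper read off the Kantorovich duality immediately afterwards. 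You instead decompose $\Delta=\Pi^\ast-\Pi$, which has vanishing marginals, into sign-consistent alternating bipartite cycles and note that a negative-cost cycle whose negative edges lie in $\supp\Pi$ violates \eqref{monono} for the family $\{(i_{m+1},j_m)\}$ under a cyclic shift --- the standard flow/cycle-cancellation argument. Your plan is sound: the conformal cycle decomposition (alternating walk plus finiteness, then induction on the size of $\supp\Delta$, with the subtracted multiple chosen as the minimum absolute value along the cycle so that signs never flip) is exactly the right lemma, and your identification of the violated permutation is correct. The trade-off is that your argument is purely combinatorial and needs no potentials, but it yields only the optimality criterion, whereas the paper's construction produces a dual maximizer and hence the Kantorovich duality as a free by-product.
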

%%%
\begin{proof}
Let  $\Pi\in \Pi(x,y)$ be an optimal coupling.
If there exist $\{(i_m, j_m)\}_{m=1}^M\subset \supp\Pi$
and $\sigma\in \mathfrak{S}_M$  for which \eqref{monono} is not valid,
then, for $\varepsilon:=\min_{1\leq m\leq M }\{ \pi_{i_mj_m} \}$,  we define $\Pi^\varepsilon \in M_{N}(\mathbb{R})$ by
\[
\pi^\varepsilon_{ij}
:=\begin{cases}
\pi_{ij}-\varepsilon   &\text{if\ } (i,j)\in \{ (i_m, j_m)\}_{m=1}^M,\\
\pi_{ij}+\varepsilon   &\text{if\ } (i,j)\in \{ (i_{\sigma(m)},j_m)\}_{m=1}^M, \\
\pi_{ij} & \text{otherwise}.
\end{cases}
\]
We see that $\Pi^\varepsilon\in \Pi(x,y)$ and 
\[
0\leq   \langle C, \Pi^\varepsilon\rangle-  \langle C, \Pi \rangle
=-\varepsilon \sum_{m=1}^M  c_{i_m j_m}+\varepsilon \sum_{m=1}^M  c_{i_{\sigma(m)}j_m}
=\varepsilon \sum_{m=1}^M  (c_{i_{\sigma(m)}j_m}- c_{i_m j_m})<0,
\]
which is a contradiction.
Thus $\supp\Pi$ is $C$-cyclically monotone.
%%%

%%%
Conversely,  assume that $\supp\Pi$ is $C$-cyclically monotone.
We define $\xi\in \mathbb{R}^N$ by
\[
\xi_i:= \sup_{M\in \mathbb{N}}\max\left\{  \sum_{m=1}^{M}  \left(c_{i_{m}j_{m}}-c_{i_{m+1}  j_{m}}\right)
             \ \bigg| \ (i_m, j_m)\in  \supp\Pi \ \ \text{for}\ 1\leq m\leq  M,\ \   i_{M+1}=i\right\}.
\]
For $z\in \mathbb{R}^N$, if we define $z^C\in \mathbb{R}^N$ by
\[
z_j^C:=\min_{1\leq i\leq N}\{z_i+c_{ij}\} \quad \text{for}\ 1\leq j\leq N,
\]
then we observe that 
\[
\sum_{j=1}^N\xi^C_jy_j -\sum_{i=1}^N\xi_ix_i
=\sum_{i,j=1}^N \left(\xi^C_j-\xi_i\right) \pi'_{ij}     
\leq \sum_{i,j=1}^N c_{ij} \pi'_{ij}=\langle C, \Pi'\rangle     
\quad \text{for}\ \Pi' \in \Pi(x,y).
\]
%%%%
For $(i',j')\in  \supp\Pi$, it follows that 
\begin{align*}
\xi_i
%%%
&\geq 
 \sup_{M\in \mathbb{N}}\max\left\{  \sum_{m=1}^{M}  \left(c_{i_{m}j_{m}}-c_{i_{m+1}  j_{m}}\right)
             \ \bigg| \begin{array}{l}  (i_m, j_m)\in  \supp\Pi, \ 1\leq m\leq M-1,\\ (i_M, j_M)=(i',j'), \  i_{M+1}=i \end{array}\right\}\\
%%%
&= \xi_{i'}+\left(c_{i' j'}-c_{i  j'}\right).            
\end{align*}
%%%
Thus we find that 
\[
\xi^C_{j'}=\min_{1\leq i\leq N}\{ \xi_i+c_{i  j'} \} \geq  \xi_{i'}+c_{i' j'},
\]
which means that $\xi^C_{j}-\xi_{i} =c_{ij}$ if $(i,j)\in  \supp\Pi$.
It turns out that 
\begin{align*}
\sum_{j=1}^N\xi^C_jy_j -\sum_{i=1}^N\xi_ix_i
&\leq  \min_{\Pi' \in \Pi(x,y)} \langle C, \Pi'\rangle \\
&\leq  \langle C, \Pi \rangle
 =\sum_{i,j=1}^N \left(\xi^C_j-\xi_i\right) \pi_{ij}   
=\sum_{j=1}^N\xi^C_jy_j -\sum_{i=1}^N\xi_ix_i,
\end{align*}
implying the optimality of $\Pi$.
\end{proof}
%%%

By the proof, we notice the following relation. 
\begin{align*}
\mathcal{C}(x,y)
&=
\sup\left\{
\left\langle (-\xi, \eta), (x,y)\right\rangle \ \big|\ (\xi,\eta) \in \mathbb{R}^N\times\mathbb{R}^N ,\ \eta_j-\xi_i\leq  c_{ij}\quad \text{for\ }1\leq i,j \leq N\right\} \\
&=
\sup\left\{
\left\langle (-z, z^C), (x,y)\right\rangle \ \big|\ z \in \mathbb{R}^N \right\}.
\end{align*}
This relation called the \emph{Kantorovich duality}
(see \cite{Vi}*{Chapter~5} for more details).
%%%%%%%%%%%%%%%%
\begin{corollary}\label{convop}
Let $((x^n, y^n))_{n\in \mathbb{N}}$ be a sequence in $(\mathcal{P}_N \times \mathcal{P}_N, \|\cdot\|_2)$ converging to~$(x,y)$.

For an optimal coupling $\Pi^n\in \Pi(x^n,y^n)$, 
$(\Pi^n)_{n\in \mathbb{N}}$ contains a convergent subsequence and the limit of any convergent subsequence  is an optimal coupling between~$x,y$.
Hence $\mathcal{C}$ is continuous  on $(\mathcal{P}_N \times \mathcal{P}_N , \|\cdot\|_2)$.

Conversely, for an optimal coupling $\Pi_\ast \in \Pi(x,y)$,
there exists  a sequence of optimal couplings $\Pi^n_\ast\in \Pi(x^n,y^n)$  containing a  subsequence converging to~$\Pi_\ast$.
\end{corollary}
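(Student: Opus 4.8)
The plan is to establish the three assertions of Corollary~\ref{convop} in turn, using compactness of the constraint sets together with the characterization of optimality via $C$-cyclical monotonicity from Proposition~\ref{cmono}.

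\textbf{Precompactness and the limit of subsequences.}
First I would observe that every $\Pi^n$ lies in $\mathcal{P}_{N\times N}$, which is a compact subset of $(M_N(\mathbb{R}), \|\cdot\|_2)$; hence $(\Pi^n)_{n\in\mathbb{N}}$ has a convergent subsequence $(\Pi^{n_k})_k$, say with limit $\Pi_\infty$. Passing to the limit in the constraints~\eqref{constraints} for $(x^{n_k}, y^{n_k})$ and using $(x^{n_k}, y^{n_k})\to(x,y)$, one gets $\Pi_\infty\in\Pi(x,y)$. To see $\Pi_\infty$ is optimal, I would compare $\langle C,\Pi_\infty\rangle$ with $\langle C,\Pi'\rangle$ for an arbitrary $\Pi'\in\Pi(x,y)$: pick (by the last assertion, or directly) couplings $\Pi'^{n}\in\Pi(x^n,y^n)$ approximating $\Pi'$ — but to avoid circularity I would instead argue that $\langle C,\Pi^{n_k}\rangle=\mathcal{C}(x^{n_k},y^{n_k})\le\langle C,\tilde\Pi^{n_k}\rangle$ for any sequence $\tilde\Pi^{n_k}\in\Pi(x^{n_k},y^{n_k})$, and build such a sequence converging to $\Pi'$ by a simple linear-algebra gluing (e.g. rescaling $x\otimes y$ style corrections, or transporting $\Pi'$ through the map $(x,y)\mapsto x\otimes y$ and back). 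Then taking $k\to\infty$ gives $\langle C,\Pi_\infty\rangle\le\langle C,\Pi'\rangle$, so $\Pi_\infty$ is optimal. Since every convergent subsequence has an optimal limit and all optimal couplings have the same cost $\mathcal{C}(x,y)$, we get $\mathcal{C}(x^n,y^n)=\langle C,\Pi^n\rangle\to\mathcal{C}(x,y)$, proving continuity of $\mathcal{C}$.

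\textbf{The converse: approximating a fixed optimal $\Pi_\ast$.}
For the last assertion, given an optimal $\Pi_\ast\in\Pi(x,y)$, I would construct $\Pi^n_\ast\in\Pi(x^n,y^n)$ with $\Pi^n_\ast\to\Pi_\ast$ and each $\Pi^n_\ast$ optimal. The natural idea is: first build \emph{some} $\tilde\Pi^n\in\Pi(x^n,y^n)$ with $\tilde\Pi^n\to\Pi_\ast$ (again by an explicit gluing/correction argument, e.g. writing $\tilde\Pi^n = \Pi_\ast + $ a small rank-correction supported so as to fix the marginals, whose norm is controlled by $\|x^n-x\|_2+\|y^n-y\|_2$), then replace $\tilde\Pi^n$ by a genuinely optimal $\Pi^n_\ast\in\Pi(x^n,y^n)$ and show that $\Pi^n_\ast$ still converges to $\Pi_\ast$. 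For this last point I would invoke the first part: any subsequential limit of $(\Pi^n_\ast)$ is an optimal coupling between $x,y$, so all subsequential limits have cost $\mathcal{C}(x,y)$; this alone does not pin down the limit as $\Pi_\ast$ when optimal couplings are non-unique, so here I would instead use $C$-cyclical monotonicity: show $\supp\Pi_\ast$ stays ``compatible'' so that one can take $\Pi^n_\ast$ of the form ``$\Pi_\ast$ with a vanishing correction,'' i.e. actually prove that the correction applied to the optimal $\tilde\Pi^n$ can be chosen to keep optimality while keeping the distance to $\Pi_\ast$ small — e.g. by a Kantorovich-dual argument: fix a dual optimizer $z$ for $(x,y)$, perturb it to a dual optimizer $z^n$ for $(x^n,y^n)$ (the dual problem is a finite LP depending continuously, on the relevant face, on $(x,y)$), and take $\Pi^n_\ast$ supported in $\{(i,j): (z^n)^C_j - z^n_i = c_{ij}\}$ close to $\Pi_\ast$.

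\textbf{Main obstacle.}
The routine compactness half is straightforward; the real work is the converse, specifically producing optimal couplings $\Pi^n_\ast$ that converge to the \emph{prescribed} $\Pi_\ast$ rather than to some other optimal coupling. The cleanest route is the dual/support-set one: identify the face of the polytope $\Pi(x,y)$ consisting of optimal couplings with the polytope of nonnegative matrices having marginals $(x,y)$ and support inside a fixed $C$-cyclically monotone (dual-active) set $S$, check that for $n$ large one may take the same $S$ (or a set converging to $S$) for $(x^n,y^n)$, and then appeal to the elementary fact that the vertices/points of a transportation polytope depend continuously on the marginals as long as the support pattern is fixed and feasible. Making ``$S$ works for $(x^n,y^n)$'' precise — i.e. that $\Pi(x^n,y^n)$ contains a matrix supported in (a small thickening of) $S$ close to $\Pi_\ast$ — is the one point requiring genuine care, and I would handle it with the explicit marginal-correction construction restricted to indices in $\supp x\times\supp y$ used already in Lemma~\ref{supp}.
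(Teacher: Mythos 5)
The first half of your plan is workable but already rougher than necessary: to prove that a subsequential limit $\Pi_\infty$ is optimal you compare against an arbitrary $\Pi'\in\Pi(x,y)$, which forces you to approximate $\Pi'$ by couplings in $\Pi(x^{n_k},y^{n_k})$ — precisely the nontrivial construction you defer as ``a simple linear-algebra gluing.'' The paper sidesteps this entirely: since $\Pi^{n(l)}\to\Pi_\infty$, one has $\supp\Pi_\infty\subset\supp\Pi^{n(l)}$ for $l$ large, each $\supp\Pi^{n(l)}$ is $C$-cyclically monotone by Proposition~\ref{cmono}, a subset of a $C$-cyclically monotone set is again $C$-cyclically monotone, and Proposition~\ref{cmono} then gives optimality of $\Pi_\infty$ with no approximation needed.

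The genuine gap is in the converse, which is the heart of the corollary, and your sketch leaves both crucial steps unproven. First, the ``small rank-correction'' $\Pi_\ast+(x^n-x)\otimes y+x^n\otimes(y^n-y)$ (or any variant) does fix the marginals and has small norm, but it can produce \emph{negative} entries exactly where $\Pi_\ast$ vanishes; nonnegativity is the entire difficulty, and controlling it by $\|x^n-x\|_2+\|y^n-y\|_2$ does not address it. Second, your dual route rests on the claim that a dual optimizer $z$ for $(x,y)$ can be perturbed to a dual optimizer $z^n$ for $(x^n,y^n)$ ``depending continuously'' on the marginals; optimal-solution sets of linear programs are not continuous in the data, and even if one arranges $z^n\to z$, the active set of $z^n$ need not contain $\supp\Pi_\ast$, nor need $\Pi(x^n,y^n)$ contain any matrix supported in (or near) the active set of $z$ that is close to $\Pi_\ast$ — e.g.\ when $\supp x\otimes y\subsetneq\supp x^n\otimes y^n$ the new rows and columns must carry mass outside that set. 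You flag this yourself as the one point ``requiring genuine care,'' but that point is the statement to be proved. For comparison, the paper resolves it by writing $x^n=\delta_n x+(1-\delta_n)\widetilde x^n$, $y^n=\delta_n y+(1-\delta_n)\widetilde y^n$ with $\delta_n\to1$ as in \eqref{delta} (the min-ratio factor is what guarantees $\widetilde x^n,\widetilde y^n\in\mathcal{P}_N$), taking $Q^n$ optimal in $\Pi(\widetilde x^n,\widetilde y^n)$ with $Q^{n(l)}\to Q$ optimal for $(x,y)$, and setting $\Pi^n_\ast:=\delta_n\Pi_\ast+(1-\delta_n)Q^n\in\Pi(x^n,y^n)$, which converges to $\Pi_\ast$ with nonnegativity automatic; optimality of $\Pi^{n(l)}_\ast$ follows because it has the same cost as $Q^{n(l)}_\ast:=\delta_{n(l)}Q+(1-\delta_{n(l)})Q^{n(l)}$, whose support equals $\supp Q^{n(l)}$ for large $l$ and is therefore $C$-cyclically monotone. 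Until you supply an argument of comparable strength for your two missing steps, the converse half of your proposal is a strategy outline rather than a proof.
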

\begin{proof}
%
%%%
By the compactness of $(\mathcal{P}_{N\times N}, \|\cdot\|_2)$,
there is a convergent subsequence  $(\Pi^{n(l)})_{l\in \mathbb{N}}$ of $(\Pi^n)_{n\in \mathbb{N}}$.
Set $\Pi:=\lim_{l\to \infty} \Pi^{n(l)}$.
We see that 
\begin{equation}\label{marginal}
\sum_{k=1}^N\pi_{ik}=\sum_{k=1}^N \lim_{l \to \infty}\pi^{n(l)}_{ik}=x_i,\quad
\sum_{k=1}^N \pi_{kj}=\sum_{k=1}^N \lim_{l \to \infty}\pi^{n(l)}_{kj}=y_j, \quad \text{for $1\leq i,j\leq N$},
\end{equation}
hence $\Pi \in \Pi(x,y)$.
For  $l\in \mathbb{N}$ large enough,  $\supp \Pi \subset \supp \Pi^{n(l)}$ holds.
By  Proposition~\ref{cmono},  $\supp \Pi^{n(l)}$ is $C$-cyclically monotone 
and so is $\supp \Pi$, hence $\Pi$ is optimal.

To prove the converse implication, 
we choose a subsequence of $((x^n,y^n))_{n\in \mathbb{N}}$, still denoted by $((x^n,y^n))_{n\in \mathbb{N}}$,
such that $(x^n,y^n)\neq (x,y)$ and  $\supp x \otimes y \subset  \supp  x^n \otimes y^n$ for $n\in \mathbb{N}$.
Define $\delta_n\in (0,1)$ and $\widetilde{x}^n, \widetilde{y}^n\in \mathcal{P}_N$ by
\begin{equation}\label{delta}
\begin{split}
&\delta_n:=\min_{(i,j)\in\supp x\otimes y}\left\{ \frac{x_i^n}{x_i}, \frac{y_j^n}{y_j}\right\}\cdot  
\left(1-\sqrt{ \max_{(i,j)\in\supp x^n\otimes y^n}\{ |x_i^n-x_i|, |y_j^n-y_j|\} } \right)\xrightarrow{n\to\infty}1,\\
%%%%%%
&\widetilde{x}^n:=\frac{x^n-\delta_n x }{1-\delta_n}\xrightarrow{n\to\infty}x,\qquad
 \widetilde{y}^n:=\frac{y^n-\delta_n y }{1-\delta_n}\xrightarrow{n\to\infty}y,
\end{split}
\end{equation}
respectively.
For an optimal coupling $Q^n \in \Pi( \widetilde{x}^n, \widetilde{y}^n)$,
we denote by $(Q^{n(l)})_{l\in \mathbb{N}}$ and $Q$ a convergent subsequence of  $(Q^n)_{n\in \mathbb{N}}$ and its limit, respectively. 
Then $Q\in \Pi(x,y)$ is optimal.
If we set 
\[
Q_\ast^n:=\delta_n Q +(1-\delta_n) Q^n,\qquad
%%%%
\Pi_\ast^n:=\delta_n \Pi_\ast +(1-\delta_n) Q^n,
\]
then $Q_\ast^n, \Pi_\ast^n \in \Pi(x^n,y^n)$ and $\lim_{n\to\infty}\Pi_\ast^n=\Pi_\ast$.
It follows that 
\[
\langle C, Q_\ast^n\rangle
=\delta_n \mathcal{C}(x,y)+(1-\delta_n )\mathcal{C}(\widetilde{x}^n, \widetilde{y}^n)
=
\langle C, \Pi_\ast^n\rangle.
\]
For  $l\in \mathbb{N}$ large enough,  $\supp Q \subset \supp Q^{n(l)}$ holds,  hence $\supp Q_\ast^{n(l)} =\supp  Q^{n(l)}$,
which is $C$-cyclically monotone.
Then $Q_\ast^{n(l)}$ is optimal by Proposition~\ref{cmono}, and $\Pi_\ast^{n(l)}$ is also optimal.
This completes the proof of the corollary.
\end{proof}

\begin{remark}
By Corollary~\ref{convop}, a sequence of optimal couplings contains a convergent subsequence,
however  the sequence itself may not converge.
Indeed, if  $C\in M_{N}(\mathbb{R})$ is the all 1s matrix, then $\supp x\otimes y$ is $C$-cyclically monotone for any $x,y \in \mathcal{P}_N$,
consequently  every element in  $\Pi(x,y)$ is optimal by Proposition~\ref{cmono}.
\end{remark}
%%%%%%%%%%%%%%%%%%

%%%%%%%%%%%%%%%%%%
\section{relaxation}
%%%%
We introduce a relaxation  of  the optimal transport problem~\eqref{wass} via strictly convex functions.
%%%
Throughout this section,
we choose a strictly convex, continuous function $f$ on $[0,1]$  such that $f\in C^1((0,1])$ and fix it unless otherwise indicated.
Then the limit $f'(0):=\lim_{\varepsilon \downarrow 0} f'(\varepsilon)$ exists in $[-\infty,\infty)$
and  $\lim_{\varepsilon \downarrow 0} \varepsilon f'(\varepsilon)=0$
by  the convexity of $f$.
To make sense of the later function  when $r_0=0$,
we employ throughout the convention that  $\pm\infty \cdot 0:=0$.
Define  $D_f: [0,1] \times [0,1] \to (-\infty,\infty]$ by 
\[
D_f(r,r_0)=f(r)-f(r_0)-f'(r_0)(r-r_0).
\]
It follows from the strict convexity of $f$ that $D_f(r,r_0)\in[0,\infty]$  and $D_f(r,r_0)=0$ if and only if $r=r_0$.
The function $D_f$ is continuous on $[0,1] \times (0,1]$ and lower semicontinuous on $[0,1]\times [0,1]$.
Moreover, for $r_0\in (0,1]$, the function $D_f(\cdot, r_0)$ is strictly convex,  continuous on $[0,1]$.
%
%%%%%%%%%%%%%%%%%%%%%%%%%%%%
\begin{definition}
Fix  $C\in M_{N}(\mathbb{R})$
and a strictly convex, continuous function $f$ on $[0,1]$  such that $f\in C^1((0,1])$.
\begin{enumerate}
\setlength{\leftskip}{-17pt}
\item
We define the \emph{Bregman divergence} associated to $f$ on  $\mathcal{P}_{N \times N}\times \mathcal{P}_{N \times N}$ by 
\[
\mathcal{D}_f(\Pi, \widetilde{\Pi}):= 
\displaystyle \sum_{i,j=1}^N D_f(\pi_{ij},   \widetilde{\pi}_{ij})
\] 
%
%%%%%%%%%%%%%
\item
For $\gamma\geq 0$ and $x,y\in \mathcal{P}_N$, 
we define a function  $\mathcal{F}^\gamma_{x,y}$  on $\Pi(x,y)$ by 
\[
\mathcal{F}^\gamma_{x,y}(\Pi) :=\langle C, \Pi\rangle+\gamma  \mathcal{D}_f(\Pi, x\otimes y).
\]
We call $P^\gamma \in \Pi(x,y)$ an $(f,\gamma)$-coupling between $x,y$ if 
\[
\mathcal{F}^\gamma_{x,y}(P^\gamma)=\inf_{\Pi\in \Pi(x,y)}\mathcal{F}^\gamma_{x,y}(\Pi).
\]
\item
For $\gamma \geq 0$ and $\lambda\geq 0$, we define two functions $\mathcal{C}^\gamma$ and  $\mathcal{C}_\lambda$  on $\mathcal{P}_N \times\mathcal{P}_N$ by 
\begin{align*}
\mathcal{C}^\gamma(x,y)& :=\inf\left\{ \langle C, P^\gamma\rangle \bigm|
\text{$P^\gamma \in \Pi(x,y)$ is  an $(f,\gamma)$-coupling}
\right\},\\
\mathcal{C}_\lambda(x,y)&:= \inf\{  \langle C, \Pi\rangle \bigm| \ \Pi \in \Pi(x,y),\ \mathcal{D}_f(\Pi, x\otimes y)  \leq \lambda \},
\end{align*}
respectively.
%%%
\item
We define a function $\Lambda$ on $\mathcal{P}_N \times\mathcal{P}_N$ by 
\[
\Lambda(x,y):=\inf\left\{ \mathcal{D}_f(\Pi, x\otimes y) \bigm| \Pi\in \Pi(x,y)  \text{\ is an optimal coupling}\right\}.
\]
%%%%%%%%%%%%%%%%%%
\end{enumerate}
\end{definition}
\begin{remark}
A relaxed optimal transport problem via Bregman divergences is also studied by
{Dessein}, {Papadakis} and {Rouas}~\cite{DPR},
where a Bregman divergence is determined by  a convex function $\phi$  on $M_{N}(\mathbb{R})$
and all component of $C$ are nonnegative.
They used the Bregman projection
and  chose the projection point of the minimizer of $\phi$ instead of $x\otimes y$ as a base point.
The choice of the base point is crucial for Bregman divergences other than the Kullback--Leibler divergence.
In our method, the Bregman projection does not appear and the assumption of convex functions is milder  than theirs.
For example, the function  $f(r):=-r^{1/2}e^{-r}-r$  can be treated in our setting, 
but  $\phi(\Pi):=\sum_{i,j=1}^N f(\pi_{ij})$ cannot be treated in the setting in~\cite{DPR}.
\end{remark}

It is trivial that 
an $(f,0)$-coupling is an optimal coupling and  $\mathcal{C}^0=\mathcal{C}$.
For $x,y\in \mathcal{P}_N$ and $\Pi\in \Pi(x,y)$, we see that 
\[
\mathcal{D}_f(\Pi, x\otimes y)=\sum_{(i,j)\in \supp x\otimes y} D_f(\pi_{ij},  x_iy_j) \in[0,\infty),
\]
and $\mathcal{D}_f(\Pi, x\otimes y)=0$ if and only if $\Pi=x\otimes y$.
By the strict convexity and the continuity of $\mathcal{D}_f(\cdot, x\otimes y)$ on the compact set  $(\Pi(x,y), \|\cdot\|_2)$, we have the following.
\begin{proposition}
For $x,y \in \mathcal{P}_N$ and $\gamma>0$,  an $(f,\gamma)$-coupling between $x,y$ is uniquely determined.
Moreover, there exists a unique optimal coupling $\Pi_\ast \in \Pi(x,y)$ such that $\mathcal{D}_f(\Pi_\ast , x\otimes y)=\Lambda(x,y)$.
\end{proposition}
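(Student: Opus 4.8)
The plan is to derive both assertions from one elementary principle: a continuous, strictly convex real-valued function on a nonempty convex compact set attains its minimum at exactly one point. The only ingredient that is not purely formal is the strict convexity of $\mathcal{D}_f(\cdot,x\otimes y)$ on $\Pi(x,y)$, so I would establish that first. By Lemma~\ref{supp}, every $\Pi\in\Pi(x,y)$ satisfies $\pi_{ij}=0$ for $(i,j)\notin\supp x\otimes y$, so on $\Pi(x,y)$ one has $\mathcal{D}_f(\Pi,x\otimes y)=\sum_{(i,j)\in\supp x\otimes y}D_f(\pi_{ij},x_iy_j)$, a finite sum of the functions $D_f(\cdot,x_iy_j)$ with $x_iy_j\in(0,1]$, each continuous and strictly convex on $[0,1]$. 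In particular $\mathcal{D}_f(\cdot,x\otimes y)$ is continuous on $\Pi(x,y)$; and if $\Pi^0\neq\Pi^1$ in $\Pi(x,y)$, then they must differ in some coordinate $(i,j)\in\supp x\otimes y$, since off that set both vanish, so combining strict convexity of $D_f(\cdot,x_iy_j)$ in that one coordinate with convexity of the remaining summands yields a strict inequality along the segment from $\Pi^0$ to $\Pi^1$. Hence $\mathcal{D}_f(\cdot,x\otimes y)$ is strictly convex on the convex set $\Pi(x,y)$.

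For the $(f,\gamma)$-coupling, since $\gamma>0$ and $\Pi\mapsto\langle C,\Pi\rangle$ is affine and continuous, $\mathcal{F}^\gamma_{x,y}=\langle C,\cdot\rangle+\gamma\,\mathcal{D}_f(\cdot,x\otimes y)$ is continuous and strictly convex on the nonempty convex compact set $(\Pi(x,y),\|\cdot\|_2)$. Continuity together with compactness gives a minimizer, so an $(f,\gamma)$-coupling exists; and if $P^\gamma\neq Q^\gamma$ were two of them, then $\tfrac12(P^\gamma+Q^\gamma)\in\Pi(x,y)$ and strict convexity would force $\mathcal{F}^\gamma_{x,y}\!\left(\tfrac12(P^\gamma+Q^\gamma)\right)<\tfrac12\mathcal{F}^\gamma_{x,y}(P^\gamma)+\tfrac12\mathcal{F}^\gamma_{x,y}(Q^\gamma)=\inf_{\Pi\in\Pi(x,y)}\mathcal{F}^\gamma_{x,y}(\Pi)$, a contradiction. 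Thus the $(f,\gamma)$-coupling is unique.

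For the last assertion I would apply the same principle to the set of optimal couplings $\mathrm{Opt}(x,y):=\{\Pi\in\Pi(x,y)\mid\langle C,\Pi\rangle=\mathcal{C}(x,y)\}$. As recalled in Section~2 this set is nonempty; it is closed in $\Pi(x,y)$, being the preimage of a point under the continuous map $\Pi\mapsto\langle C,\Pi\rangle$, hence compact; and it is convex, since a convex combination of two couplings of cost $\mathcal{C}(x,y)$ again lies in $\Pi(x,y)$ and has cost $\mathcal{C}(x,y)$. The continuous strictly convex function $\mathcal{D}_f(\cdot,x\otimes y)$ therefore attains its minimum over $\mathrm{Opt}(x,y)$ at a unique point $\Pi_\ast$, and by the definition of $\Lambda$ one has $\mathcal{D}_f(\Pi_\ast,x\otimes y)=\Lambda(x,y)$. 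The only delicate step is the strict convexity of $\mathcal{D}_f(\cdot,x\otimes y)$, where one must ensure that distinct couplings differ in a coordinate lying in $\supp x\otimes y$; this is exactly what Lemma~\ref{supp} supplies, and everything else is the standard compactness-plus-strict-convexity argument.
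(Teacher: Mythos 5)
Your proof is correct and follows essentially the same route the paper intends: the paper derives the proposition in one line from the strict convexity and continuity of $\mathcal{D}_f(\cdot,x\otimes y)$ on the compact convex set $(\Pi(x,y),\|\cdot\|_2)$, and you simply supply the omitted details (reduction of the sum to $\supp x\otimes y$ via Lemma~\ref{supp}, strict convexity coordinatewise, and the compactness--convexity of the set of optimal couplings for the second assertion). No gaps.
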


Computing $\mathcal{C}_\lambda(x,y)$ is equivalent to solving the variational problem~\eqref{wass} under the constraint $\mathcal{D}_f(\Pi, x\otimes y)=\lambda$, and reduces to  computing $\mathcal{C}^\gamma(x,y)$.
In the case $f(r)=r\log r$, this property is mentioned (but not proved)  in~\cite{Cuturi}.
See also \cite{DPR}*{Theorem~15}.
%%%%%%
\begin{theorem}\label{inc}
Given $x,y\in \mathcal{P}_N$ and  $\lambda \in [0, \Lambda(x,y)]$, it follows that
 \begin{align}\label{bdd}
\mathcal{C}_{\lambda} (x,y)
= \inf\left\{  \langle C, \Pi\rangle \bigm|  \Pi \in \Pi(x,y),\ \mathcal{D}_f(\Pi, x\otimes y) =\lambda\right\}.
\end{align}
Furthermore, if $\lambda=\mathcal{D}_f(P^\gamma,  x\otimes y )$ holds for some $\gamma \geq0$,  where $P^\gamma$ is  an $(f,\gamma)$-coupling between~$x,y$, 
then $\mathcal{C}_{\lambda} (x,y)=\mathcal{C}^\gamma(x,y)$.
\end{theorem}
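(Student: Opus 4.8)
Write $R(x,y)$ for the right-hand side of \eqref{bdd}. Since any $\Pi\in\Pi(x,y)$ with $\mathcal{D}_f(\Pi,x\otimes y)=\lambda$ is in particular admissible for $\mathcal{C}_\lambda(x,y)$, one has $\mathcal{C}_\lambda(x,y)\le R(x,y)$ at once; the plan is therefore to prove only the reverse inequality $R(x,y)\le\mathcal{C}_\lambda(x,y)$, and then separately the identity $\mathcal{C}_\lambda(x,y)=\mathcal{C}^\gamma(x,y)$.

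For $R(x,y)\le\mathcal{C}_\lambda(x,y)$ I would fix an arbitrary $\Pi\in\Pi(x,y)$ with $\mu:=\mathcal{D}_f(\Pi,x\otimes y)\le\lambda$ and build from it a $\Pi'\in\Pi(x,y)$ with $\mathcal{D}_f(\Pi',x\otimes y)=\lambda$ and $\langle C,\Pi'\rangle\le\langle C,\Pi\rangle$; taking the infimum over such $\Pi$ then gives the inequality, and applying the construction to $\Pi=x\otimes y$ also shows the constraint set defining $R(x,y)$ is nonempty. The device is convex interpolation toward the optimal coupling: let $\Pi_\ast\in\Pi(x,y)$ be the unique optimal coupling with $\mathcal{D}_f(\Pi_\ast,x\otimes y)=\Lambda(x,y)$, whose existence is the content of the proposition above, and put $\Pi_t:=(1-t)\Pi+t\,\Pi_\ast$ for $t\in[0,1]$, which lies in $\Pi(x,y)$ by convexity. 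Along this segment the cost $t\mapsto\langle C,\Pi_t\rangle=(1-t)\langle C,\Pi\rangle+t\,\mathcal{C}(x,y)$ is affine and, because $\mathcal{C}(x,y)\le\langle C,\Pi\rangle$, nonincreasing, so $\langle C,\Pi_t\rangle\le\langle C,\Pi\rangle$ for every $t$; meanwhile $g(t):=\mathcal{D}_f(\Pi_t,x\otimes y)$ is continuous on $[0,1]$ with $g(0)=\mu\le\lambda$ and $g(1)=\Lambda(x,y)\ge\lambda$, so the intermediate value theorem supplies $t_0\in[0,1]$ with $g(t_0)=\lambda$, and $\Pi':=\Pi_{t_0}$ does the job.

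For the last assertion, suppose $\lambda=\mathcal{D}_f(P^\gamma,x\otimes y)$ for an $(f,\gamma)$-coupling $P^\gamma$. When $\gamma=0$, $P^0$ is an optimal coupling, so $\mathcal{C}^0(x,y)=\mathcal{C}(x,y)$; and $\mathcal{C}_\lambda(x,y)=\mathcal{C}(x,y)$ too, since $\Pi_\ast$ is admissible for $\mathcal{C}_\lambda$ (because $\mathcal{D}_f(\Pi_\ast,x\otimes y)=\Lambda(x,y)\le\lambda$) while $\mathcal{C}_\lambda(x,y)\ge\mathcal{C}(x,y)$ holds trivially, the infimum being over a subset of $\Pi(x,y)$. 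When $\gamma>0$, $P^\gamma$ is the unique $(f,\gamma)$-coupling, so $\mathcal{C}^\gamma(x,y)=\langle C,P^\gamma\rangle$; admissibility of $P^\gamma$ for $\mathcal{C}_\lambda$ gives ``$\le$'', and for any admissible $\Pi$ the minimality $\mathcal{F}^\gamma_{x,y}(\Pi)\ge\mathcal{F}^\gamma_{x,y}(P^\gamma)$ rearranges to $\langle C,\Pi\rangle\ge\langle C,P^\gamma\rangle+\gamma\big(\lambda-\mathcal{D}_f(\Pi,x\otimes y)\big)\ge\langle C,P^\gamma\rangle$, giving ``$\ge$''. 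A quick comparison of $\mathcal{F}^\gamma_{x,y}(P^\gamma)$ with $\mathcal{F}^\gamma_{x,y}(\Pi_\ast)$ additionally shows $\lambda\le\Lambda(x,y)$, so such $\lambda$ indeed fall in the range covered by the first part.

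The step I expect to require the most care is the continuity of $g$ on $[0,1]$: $D_f$ is only lower semicontinuous at base points having a vanishing coordinate, which happens exactly when $f'(0)=-\infty$. This is where Lemma~\ref{supp} is needed: since $\supp\Pi_t\subset\supp x\otimes y$ for every $t$, one has $g(t)=\sum_{(i,j)\in\supp x\otimes y}D_f\big((\Pi_t)_{ij},x_iy_j\big)$, so $g$ only ever evaluates $D_f(\cdot,r_0)$ at $r_0=x_iy_j\in(0,1]$, where $D_f$ is continuous; together with the continuity of $t\mapsto\Pi_t$ this makes $g$ continuous. Everything else is the affinity-and-convexity bookkeeping indicated above, which is routine.
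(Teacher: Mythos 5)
Your proof is correct, and it shares the paper's key device---the segment $\Pi_t=(1-t)\Pi+t\,\Pi_\ast$ toward the $\mathcal{D}_f$-minimizing optimal coupling, the affinity of $t\mapsto\langle C,\Pi_t\rangle$, and the intermediate value theorem applied to $t\mapsto\mathcal{D}_f(\Pi_t,x\otimes y)$, whose continuity both arguments justify via $\supp\Pi_t\subset\supp x\otimes y$. The logical packaging differs, though. The paper first disposes of $\lambda\in\{0,\Lambda(x,y)\}$, then for $\lambda\in(0,\Lambda(x,y))$ uses compactness of the sublevel set $\{\mathcal{D}_f(\cdot,x\otimes y)\le\lambda\}$ to extract a minimizer $\Pi_\lambda$ with $\mathcal{C}_\lambda(x,y)=\langle C,\Pi_\lambda\rangle>\mathcal{C}(x,y)$, and shows by contradiction (interpolating $\Pi_\lambda$ toward $\Pi_\ast$) that this minimizer must sit exactly on the level set; the ``furthermore'' part is then read off by sandwiching $\mathcal{F}^\gamma_{x,y}(\Pi_\lambda)$ against $\mathcal{F}^\gamma_{x,y}(P^\gamma)$. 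You instead argue attainment-free and directly: for every admissible $\Pi$ you manufacture a competitor on the level set with no larger cost, which treats the endpoints $\lambda=0,\Lambda(x,y)$ uniformly and never needs the existence of $\Pi_\lambda$ or the strict inequality $\mathcal{C}_\lambda(x,y)>\mathcal{C}(x,y)$; likewise your proof of $\mathcal{C}_\lambda(x,y)=\mathcal{C}^\gamma(x,y)$ uses only the minimality of $P^\gamma$ tested against every admissible $\Pi$ (splitting off the easy case $\gamma=0$). What the paper's route buys is a slightly stronger byproduct---the infimum defining $\mathcal{C}_\lambda(x,y)$ is attained, and attained on the level set---while your route is a bit more elementary and self-contained; both are complete proofs of the stated theorem.
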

\begin{proof}
If  $\lambda\in\{ 0,\Lambda(x,y)\}$, then \eqref{bdd} trivially holds.
Assume $\lambda\in (0,\Lambda(x,y)) \neq \emptyset$.
By the continuity of $\mathcal{D}_f(\cdot,  x\otimes y )$ on $\Pi(x,y)$, 
the set 
\[
\{\Pi \in \Pi(x,y)\ |\  \mathcal{D}_f(\Pi, x\otimes y) \leq \lambda\}
\]
is compact in $(\Pi(x,y), \|\cdot\|_2)$.
Then  there exists $\Pi_\lambda\in \Pi(x,y)$ such that
\[
\mathcal{D}_f(\Pi_\lambda, x\otimes y)\leq \lambda,
\quad
\mathcal{C}_{\lambda}(x,y)= \langle C, \Pi_\lambda \rangle>\mathcal{C}(x,y).
\]
If $\mathcal{D}_f(\Pi_\lambda , x\otimes y)<\lambda$, then there exists  $t\in (0,1)$ such that 
\[
\mathcal{D}_f(  (1-t) \Pi_\lambda+t \Pi_\ast , x\otimes y ) =\lambda
\]
by  the intermediate value theorem,
where $\Pi_\ast \in \Pi(x,y)$ is an optimal coupling such that $\mathcal{D}_f(\Pi_\ast , x\otimes y)=\Lambda(x,y)$.
However,
\[
\mathcal{C}_\lambda(x,y)
\leq \langle C,  (1-t) \Pi_\lambda+t \Pi_\ast\rangle
=(1-t)\mathcal{C}_\lambda(x,y)+t\mathcal{C}(x,y)<\mathcal{C}_\lambda(x,y),
\]
which is a contradiction.
Thus $\mathcal{D}_f(\Pi_\lambda , x\otimes y)=\lambda$ and  \eqref{bdd} holds.
In addition,  in the case  of $\lambda=\mathcal{D}_f(P^\gamma,  x\otimes y )$, 
we see that
\begin{align*}
\mathcal{F}^\gamma_{x,y}(\Pi_\lambda)
=\mathcal{C}_\lambda(x,y) +\gamma \lambda 
\leq \langle C, P^\gamma\rangle+\gamma \lambda
=
\mathcal{F}^\gamma_{x,y}(P^\gamma) 
\leq  \mathcal{F}^\gamma_{x,y}(\Pi_\lambda),
\end{align*}
which implies $\mathcal{C}_{\lambda} (x,y)=\mathcal{C}^\gamma(x,y)$.
\end{proof}
%%%%
By Theorems~\ref{dconti}, \ref{mono} and  Proposition~\ref{second} below, 
for $x,y\in \mathcal{P}_N$, 
if  $\supp x\otimes y$ is not $C$-cyclically monotone and $f'(0)=-\infty$, then 
\[
(0,\Lambda(x,y))
=\left\{ \mathcal{D}_f(P^\gamma, x\otimes y ) \ |\ \text{$\gamma>0$ and $P^\gamma \in \Pi(x,y)$ is an $(f,\gamma)$-coupling}\right\}.
\]

We prove the continuity of $\Lambda$.
%%%%
\begin{proposition}\label{lambdaconti}
The function $\Lambda$ is lower semicontinuous on $\mathcal{P}_N \times \mathcal{P}_N$
and continuous on the interior of $\mathcal{P}_N \times \mathcal{P}_N$.
Moreover, if either $f'(0)\in \mathbb{R}$ or $f(r)=r\log r$, 
then  $\Lambda$ is continuous on $\mathcal{P}_N \times \mathcal{P}_N$.
\end{proposition}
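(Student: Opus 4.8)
The plan is to prove the lower and upper semicontinuity of $\Lambda$ separately; the main tools are the stability of optimal couplings from Corollary~\ref{convop} and the semicontinuity properties of $D_f$ recorded before the definition. One recurring elementary observation is that for any $\Pi\in\Pi(x,y)$ and any $i,j$ the marginal constraints force $\pi_{ij}\le\min\{x_i,y_j\}\le\sqrt{x_iy_j}$ (cf.\ the proof of Lemma~\ref{supp}); this is what will let us control entries of couplings over indices outside $\supp x\otimes y$.

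For lower semicontinuity at $(x,y)$, I would take a sequence $(x^n,y^n)\to(x,y)$ attaining $\liminf_n\Lambda(x^n,y^n)$, let $\Pi^n_\ast\in\Pi(x^n,y^n)$ be the optimal coupling with $\mathcal{D}_f(\Pi^n_\ast,x^n\otimes y^n)=\Lambda(x^n,y^n)$, and pass to a subsequence with $\Pi^n_\ast\to\Pi$. By the first part of Corollary~\ref{convop} the limit $\Pi$ is an optimal coupling between $x$ and $y$. Since $\mathcal{D}_f$ is a finite sum of copies of $D_f$, and $D_f$ is lower semicontinuous on $[0,1]\times[0,1]$, and $x^n\otimes y^n\to x\otimes y$, we obtain $\Lambda(x,y)\le\mathcal{D}_f(\Pi,x\otimes y)\le\liminf_n\mathcal{D}_f(\Pi^n_\ast,x^n\otimes y^n)=\liminf_n\Lambda(x^n,y^n)$.

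For upper semicontinuity, suppose it fails at some $(x,y)$, so that there is a sequence $(x^n,y^n)\to(x,y)$ with $\Lambda(x^n,y^n)\to L>\Lambda(x,y)$. Let $\Pi_\ast\in\Pi(x,y)$ be the optimal coupling minimizing $\mathcal{D}_f(\cdot,x\otimes y)$, so $\mathcal{D}_f(\Pi_\ast,x\otimes y)=\Lambda(x,y)$. By the last part of Corollary~\ref{convop} there are optimal couplings $\Pi^n_\ast\in\Pi(x^n,y^n)$ a subsequence of which converges to $\Pi_\ast$; passing to that subsequence, and noting that $\supp x\otimes y\subset\supp x^n\otimes y^n$ for all large $n$, we have $\Lambda(x^n,y^n)\le\mathcal{D}_f(\Pi^n_\ast,x^n\otimes y^n)$, and it suffices to show the right-hand side tends to $\Lambda(x,y)$, which contradicts $L>\Lambda(x,y)$. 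Splitting the sum $\mathcal{D}_f(\Pi^n_\ast,x^n\otimes y^n)=\sum_{(i,j)\in\supp x^n\otimes y^n}D_f((\Pi^n_\ast)_{ij},x^n_iy^n_j)$ into the indices in $\supp x\otimes y$ and those outside it, the first group contributes terms $D_f((\Pi^n_\ast)_{ij},x^n_iy^n_j)\to D_f((\Pi_\ast)_{ij},x_iy_j)$, because there $x_iy_j>0$ and $D_f$ is continuous on $[0,1]\times(0,1]$, and their total converges to $\mathcal{D}_f(\Pi_\ast,x\otimes y)=\Lambda(x,y)$. For the remaining finitely many indices $(i,j)\notin\supp x\otimes y$ one has $(\Pi^n_\ast)_{ij}\le\sqrt{x^n_iy^n_j}$ while $x^n_iy^n_j\to0$, so it remains only to verify that $D_f(r,r_0)\to0$ whenever $r,r_0\to0$ with $r\le\sqrt{r_0}$. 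If $(x,y)$ is interior to $\mathcal{P}_N\times\mathcal{P}_N$ this second group of indices is empty, so continuity there holds for every admissible $f$; if $f'(0)\in\mathbb{R}$ the limit follows at once from $f(r),f(r_0)\to f(0)$, $f'(r_0)\to f'(0)$ and $r-r_0\to0$; and if $f(r)=r\log r$ one uses the explicit identity $D_f(r,r_0)=r\log r-r\log r_0-r+r_0$ together with $r\log r\to0$ and $r|\log r_0|\le\sqrt{r_0}\,|\log r_0|\to0$.

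I expect the principal obstacle to be exactly this boundary estimate when $f(r)=r\log r$: this is where the a priori bound $(\Pi^n_\ast)_{ij}\le\sqrt{x^n_iy^n_j}$ from the marginals must be paired with the precise rate $\sqrt{t}\,\log t\to0$, and where the hypothesis on $f$ genuinely enters, since for a general admissible $f$ with $f'(0)=-\infty$ the term $f'(r_0)\,r$ need not vanish as $r,r_0\to0$. A secondary technical point is the routine upgrade from convergence along a subsequence, which is all that Corollary~\ref{convop} furnishes, to the full upper semicontinuity statement, handled by the standard remark that a real sequence converges to $\Lambda(x,y)$ once every subsequence admits a further subsequence converging to $\Lambda(x,y)$.
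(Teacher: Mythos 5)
Your proof is correct and follows essentially the same route as the paper: lower semicontinuity from compactness, the first part of Corollary~\ref{convop} and lower semicontinuity of $D_f$, and the upper bound from the converse part of Corollary~\ref{convop} together with splitting the sum into indices in $\supp x\otimes y$ and the boundary indices, where your bound $(\Pi^n_\ast)_{ij}\le\min\{x^n_i,y^n_j\}\le\sqrt{x^n_iy^n_j}$ is the same estimate the paper uses (via $|f'(x^n_iy^n_j)|\min\{x^n_i,y^n_j\}\to 0$ when $f'(0)\in\mathbb{R}$ or $f(r)=r\log r$). Your explicit contradiction argument for upgrading the subsequential limit to the full $\limsup$ is only a slightly more careful writing of the paper's concluding step.
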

\begin{proof}
Let $((x^n, y^n))_{n\in \mathbb{N}}$ be a sequence in $(\mathcal{P}_N \times \mathcal{P}_N, \|\cdot\|_2)$ converging to $(x,y)$.
Set 
\[
S':=\supp  x^n \otimes y^n \setminus \supp x \otimes y,
\]
which is empty  if $(x,y)$ is an interior point of $\mathcal{P}_N \times \mathcal{P}_N$.
Passing to subsequences if necessary, 
we may assume that
$\supp x \otimes y \subset  \supp  x^n \otimes y^n$ 
and $\supp  x^n \otimes y^n$ does not depend on $n\in \mathbb{N}$
without loss of generally. 
Let  $\Pi_\ast^n \in \Pi(x^n,y^n)$ be an optimal coupling such that  $\mathcal{D}_f(\Pi_\ast^n , x^n\otimes y^n)=\Lambda(x^n,y^n)$.
Since any convergent  subsequence of $(\Pi_\ast^n)_{n\in \mathbb{N}}$  converges to an optimal coupling between $x,y$ by Corollary \ref{convop},
we have
\[
\Lambda(x,y)\leq \liminf_{n\to \infty}  \Lambda(x^n,y^n),
\]
which implies the lower semicontinuity of $\Lambda$ on $\mathcal{P}_N \times \mathcal{P}_N$.
Again by Corollary \ref{convop}, there is a sequence of optimal couplings $\Pi^n \in \Pi(x^n,y^n)$ such that 
$\lim_{l\to \infty}\Pi^{n(l)}= \Pi_\ast$ for some $(n(l))_{l\in\mathbb{N}}\subset \mathbb{N}$.
If $(i,j)\in \supp x\otimes y$, then
\[
\lim_{l\to \infty} f'( x^{n(l)}_iy^{n(l)}_j   )\pi^{n(l)}_{ij}
=f'(x_iy_j) \pi_\ast{}_{ij}
\]
by $f\in  C^1((0,1])$.
On the other hand,  
since   $\pi^{n(l)}_{ij}  \leq \min \{ x^{n(l)}_i, y^{n(l)}_j \}$ holds for $1\leq i, j\leq N$,
if either $f'(0)\in \mathbb{R}$ or $f(r)=r\log r$, then 
\[
\lim_{l\to \infty} \left| f'( x^{n(l)}_iy^{n(l)}_j   )\pi^{n(l)}_{ij} \right|
\leq \lim_{l\to \infty} \left| f'( x^{n(l)}_iy^{n(l)}_j   )\right| \cdot \min\{ x^{n(l)}_i, y^{n(l)}_j\}=0
\quad
\text{for\ }(i,j)\in S'.
\]
Thus
if $f'(0)\in \mathbb{R}$, $f(r)=r\log r$, 
or $(x,y)$  is an interior point of $\mathcal{P}_N \times \mathcal{P}_N$, 
then 
\[
\limsup_{n\to \infty}  \Lambda(x^n,y^n)
\leq \lim_{l\to \infty}  \mathcal{D}_f(\Pi^{n(l)}, x^{n(l)}\otimes y^{n(l)} )
=\mathcal{D}_f(\Pi_\ast, x\otimes y)=\Lambda(x,y).
\]
This completes the proof of the proposition.
\end{proof}
%%%
\begin{remark}
In Proposition~\ref{lambdaconti},
if  $f'(0)=-\infty$, 
then $\Lambda$ is  not necessarily continuous on $\mathcal{P}_N\times \mathcal{P}_N$.
Indeed, let $f(r)=-r^{p}/p$ for $p\in (0,1/2)$ and $C=(\delta_{ij})_{1\leq i,j \leq 2}\in M_2(\mathbb{R})$.
We choose  $x^a, y^a \in \mathcal{P}_2$ and $\Pi^{a,s}\in \mathcal{P}_{2\times 2}$ as 
\begin{align*}
&x_1^a=a, \quad x_2^a=1-a, \qquad
y_1^a=1-a, \quad
y_2^a=a,\quad
\text{where}\ a\in [0,1/2],\\
%%%%
&\pi_{11}^{a,s}=s,\quad \pi_{12}^{a,s}=a-s, \quad \pi^{a,s}_{21}=1-a-s  \quad\pi_{22}^s=s,
\quad
\text{where}\ s\in [0,a],
\end{align*}
respectively.
Then $\Pi^{a,0}$ is an unique optimal coupling. 
We see that 
\begin{align*}
\Lambda(x^0,y^0)%=\mathcal{D}_f(\Pi^{0,0}, x^0\otimes y^0 )
=0,
\qquad
\lim_{a\downarrow 0}\Lambda(x^a,y^a)%&=\lim_{a\downarrow 0}\mathcal{D}_f(\Pi^{a,0}, x^a\otimes y^a)
=-\lim_{a\downarrow 0} f'(a^2)a=\lim_{a\downarrow 0}a^{2p-1}=\infty.
\end{align*}
\end{remark}

We consider a condition such that $(f,\gamma)$-couplings lie in the interior of $\Pi(x,y)$.
%%%%%%%%%%%%%%%%%%%%%%%
\begin{lemma}\label{int}
For $x, y\in \mathcal{P}_N$ and $\gamma>0$,  let $P^\gamma\in \Pi(x,y)$ be an $(f,\gamma)$-coupling.
Assume that $f'(0)=-\infty$.
Then
\[
  \supp P^\gamma=\supp x\otimes y
\]
and  there exists $(\alpha, \beta)\in \mathbb{R}^{\supp x\otimes y}$ such that 
\begin{equation}\label{lag}
c_{ij}+\gamma \left(f'(p^\gamma_{ij})-f'(x_iy_j)\right)=\alpha_i+\beta_j
\qquad \text{for\ }(i,j)\in \supp x\otimes y.
\end{equation}
\end{lemma}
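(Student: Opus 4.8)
The plan is to prove the two assertions in turn, the hypothesis $f'(0)=-\infty$ entering only in the first.

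\emph{Full support.} By Lemma~\ref{supp} one always has $\supp P^\gamma\subset\supp x\otimes y$, so it suffices to rule out the existence of $(m,n)\in\supp x\otimes y$ with $p^\gamma_{mn}=0$. Assuming such a pair, I would test $P^\gamma$ against the segment $\Pi^t:=(1-t)P^\gamma+t\,(x\otimes y)\in\Pi(x,y)$, $t\in[0,1]$, and read off the one-sided derivative at $t=0$ of
\[
\mathcal{F}^\gamma_{x,y}(\Pi^t)=\sum_{(i,j)\in\supp x\otimes y}\Big(c_{ij}\pi^t_{ij}+\gamma D_f(\pi^t_{ij},x_iy_j)\Big),\qquad \pi^t_{ij}=(1-t)p^\gamma_{ij}+t\,x_iy_j.
\]
Each summand with $p^\gamma_{ij}>0$ keeps $\pi^t_{ij}$ in $(0,1]$ for small $t$, hence (using $f\in C^1((0,1])$) is differentiable at $t=0$ with finite derivative; the summand indexed by $(m,n)$ has $\pi^t_{mn}=t\,x_my_n$ and
\[
\frac{D_f(t\,x_my_n,x_my_n)-D_f(0,x_my_n)}{t}
=x_my_n\left(\frac{f(t\,x_my_n)-f(0)}{t\,x_my_n}-f'(x_my_n)\right)\xrightarrow{t\downarrow 0}x_my_n\big(f'(0)-f'(x_my_n)\big),
\]
which is $-\infty$ since $f'(0)=-\infty$, $f'(x_my_n)\in\mathbb{R}$ and $x_my_n>0$. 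Adding the finitely many finite contributions, the difference quotient $t^{-1}\big(\mathcal{F}^\gamma_{x,y}(\Pi^t)-\mathcal{F}^\gamma_{x,y}(P^\gamma)\big)$ tends to $-\infty$, so $\mathcal{F}^\gamma_{x,y}(\Pi^t)<\mathcal{F}^\gamma_{x,y}(P^\gamma)$ for small $t>0$, contradicting minimality of $P^\gamma$. Hence $p^\gamma_{ij}>0$ for every $(i,j)\in\supp x\otimes y$, that is, $\supp P^\gamma=\supp x\otimes y$.

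\emph{The multiplier identity \eqref{lag}.} By Lemma~\ref{supp} every $\Pi\in\Pi(x,y)$ vanishes off $\supp x\otimes y$, so inside $W:=\{H\in M_N(\mathbb{R}) : h_{ij}=0 \text{ for }(i,j)\notin\supp x\otimes y\}$ the set $\Pi(x,y)$ is cut out by the affine conditions $\sum_k\pi_{ik}=x_i$ $(i\in\supp x)$ and $\sum_k\pi_{kj}=y_j$ $(j\in\supp y)$ together with $\pi_{ij}\ge 0$. By the first part none of these inequalities is active at $P^\gamma$, so for every $H$ in the tangent space $V:=\{H\in W : \sum_k h_{ik}=0\ (i\in\supp x),\ \sum_k h_{kj}=0\ (j\in\supp y)\}$ the point $P^\gamma+sH$ lies in $\Pi(x,y)$ for $|s|$ small; since $P^\gamma$ minimizes the $C^1$ function $s\mapsto\mathcal{F}^\gamma_{x,y}(P^\gamma+sH)$ at $s=0$, its derivative there vanishes, giving $\langle\,G,H\,\rangle=0$ where $G\in W$ has $(i,j)$-entry $c_{ij}+\gamma\big(f'(p^\gamma_{ij})-f'(x_iy_j)\big)$ on $\supp x\otimes y$. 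Thus $G\in V^{\perp}\cap W$, which is spanned by the matrices $R_i$ (the indicator of row $i$ restricted to $\supp x\otimes y$, $i\in\supp x$) and $L_j$ (the analogous column indicators, $j\in\supp y$); writing $G=\sum_i\alpha_i R_i+\sum_j\beta_j L_j$ and comparing $(i,j)$-entries over $\supp x\otimes y$ yields $c_{ij}+\gamma(f'(p^\gamma_{ij})-f'(x_iy_j))=\alpha_i+\beta_j$, which is \eqref{lag}.

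The substantive step is the first one: one must turn ``the directional derivative is $-\infty$'' into an honest decrease of $\mathcal{F}^\gamma_{x,y}$, which requires keeping the finitely many well-behaved summands under uniform control near $t=0$ while extracting the genuine $-\infty$ from the single degenerate cell, and it rests on the convex-analytic fact (recorded in the paper) that $f'(0)=\lim_{\varepsilon\downarrow0}f'(\varepsilon)$ coincides with the right derivative $\lim_{\varepsilon\downarrow0}\big(f(\varepsilon)-f(0)\big)/\varepsilon$. The second step is then routine finite-dimensional Lagrange-multiplier bookkeeping, made legitimate precisely because the first step places $P^\gamma$ in the relative interior of $\Pi(x,y)$.
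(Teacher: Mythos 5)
Your proposal is correct, and its overall architecture matches the paper's proof: full support is obtained by perturbing along the segment $\Pi^t=(1-t)P^\gamma+t\,x\otimes y$ and using $f'(0)=-\infty$ to make the first-order variation infinitely negative, and the identity \eqref{lag} is then a stationarity condition made legitimate by interiority. In the support step you argue with one-sided difference quotients where the paper computes $\lim_{t\downarrow0}\tfrac{d}{dt}\mathcal{F}^\gamma_{x,y}(\Pi^t)$; this is only a cosmetic difference (your version even sidesteps differentiability at the degenerate cells), but note that the fact you lean on, $\lim_{\varepsilon\downarrow0}\bigl(f(\varepsilon)-f(0)\bigr)/\varepsilon=f'(0)$, is not literally recorded in the paper --- the paper only records $f'(0):=\lim_{\varepsilon\downarrow0}f'(\varepsilon)$ and $\lim_{\varepsilon\downarrow0}\varepsilon f'(\varepsilon)=0$ --- so add the one-line justification (mean value theorem on $[0,\varepsilon]$ together with monotonicity of $f'$); also say explicitly that any further degenerate cells contribute $-\infty$ as well, never $+\infty$, so the sum of difference quotients still diverges to $-\infty$. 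In the multiplier step the implementations genuinely differ: the paper assumes $|\supp x|=|\supp y|=N$, checks that the Jacobian of the $2N-1$ constraint functions has full rank, invokes the classical Lagrange multiplier theorem, and dismisses the remaining support patterns as ``analogous''; you instead work in the affine slice $W$ of matrices supported on $\supp x\otimes y$, observe that interiority makes every direction $H\in V$ admissible, deduce $G\perp V$, and identify $V^\perp\cap W$ with the matrices of the form $\alpha_i+\beta_j$. That identification deserves its one-line dimension count (both spaces have dimension $|\supp x|+|\supp y|-1$, consistent with Remark~\ref{interior}), but granted that, your route is a clean alternative whose main advantage is that it treats all support patterns uniformly rather than by analogy.
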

\begin{proof}
It follows form  Lemma~\ref{supp} that 
\[
\supp P^\gamma \subset \supp x\otimes y=:S.
\]
Assume that $\supp P^\gamma \neq S$.
For $t\in (0,1)$, set 
\[
\Pi^t:=(1-t)  P^\gamma+t x\otimes y\in \Pi(x,y).
\]
Since $P^\gamma$ is a unique minimizer of $\mathcal{F}^\gamma_{x,y}$ on $\Pi(x,y)$, it turns out that 
\begin{align*}
0\leq \lim_{t \downarrow 0} \frac{d}{dt}\mathcal{F}^\gamma_{x,y}(\Pi^t)
&= \sum_{i,j\in \supp P^\gamma}  (x_iy_j -p^\gamma_{ij})\left\{ c_{ij}+\gamma \left(f'( p_{ij}^\gamma )-f'(x_iy_j)\right) \right\}  \\
&\qquad+\sum_{i,j\in  S\setminus \supp P^\gamma}  x_iy_j
\left\{ c_{ij}+\gamma \left( \lim_{t \downarrow 0} f'(t x_iy_j)-f'(x_iy_j)\right) \right\}  \\
&=-\infty.
\end{align*}
This is a contradiction, hence $\supp\Pi^\gamma=\supp x\otimes y$.
%%%

Assume $|\supp x|=|\supp y|=N$.
If we regard $\mathcal{P}_{N\times N}$ as a subset of $\mathbb{R}^{N\times N}$,
then $P^\gamma$ lies in an open set $(\mathbb{R}_{>0})^{N\times N}$.
For $1\leq i,j \leq N$, define $X_i, Y_j:(\mathbb{R}_{>0})^{N\times N}\to \mathbb{R}$ by
\[
X_i(Z):=\sum_{k=1}^N z_{ik}, \qquad
Y_j(Z):=\sum_{k=1}^N z_{kj}.
\]
We see that, for $Z\in (\mathbb{R}_{>0})^{N\times N}$, 
$Z\in \Pi(x,y)$ if and only if
\[
X_i(Z)=x_i\quad \text{for\ }1\leq i\leq N, \qquad
Y_j(Z)=y_j\quad \text{for\ }1\leq j\leq N-1.
\]
Since  the Jacobian matrix of $(X_1, \cdots, X_N, Y_1, \cdots, Y_{N-1})$ has always rank $2N-1$, 
the method of Lagrange multipliers yields that there is $(\alpha, \beta)\in \mathbb{R}^{N \times N}$ such that $\beta_N=0$ and 
\begin{align*}
c_{ij}+\gamma \left(f'(p^\gamma_{ij})-f'(x_iy_j)\right)
&=\nabla_{z_{ij}} \mathcal{F}^{\gamma}_{x,y}(P^\gamma)\\
&= \sum_{k=1}^N \left( \alpha_k \nabla_{z_{ij}} X_k(P^\gamma)+ \beta_k \nabla_{z_{ij}} Y_k(P^\gamma) \right)
=\alpha_{i}+\beta_j.
\end{align*}
The case of either $|\supp x|\neq N$ or  $|\supp y|\neq N$ is proved analogously.
\end{proof}
%%%%
\begin{remark}\label{interior}
To apply the method of Lagrange multipliers, an extremum should be attained  in the interior of the constraint set  as in the proof of Lemma~\ref{int}.
%However, as far as the author knows, there is no reference, where this interior condition is confirmed
%(for example, see~\cite{Cuturi}*{Lemma~2} and \cite{PC}*{Proposition~4.3} in the case of $f(r)=r\log r$).
%
Note that the number of the constraint conditions is $|\supp x|+|\supp  y|-1$, not  $|\supp x|+|\supp  y|$.
\end{remark}
%%%%
\begin{remark}\label{38}
In Lemma~\ref{int}, the assumption  $f'(0)=-\infty$ is necessary.
Indeed,  if  $f'(0) \in \mathbb{R}$ then $f\in C^1([0,1])$.
We choose  $x, y^a \in \mathcal{P}_2$ and $P^{a,s}\in \mathcal{P}_{2\times 2}$ as 
\begin{align*}
&x_1=x_2=\frac12, \qquad
y_1^a=1-a, \quad
y_2^a=a,\quad
\text{where}\ a\in [0,1/2],\\
%%%%
&p_{11}^{a,s}=\frac12-s,\quad p_{12}^{a,s}=s, \quad p_{21}^{a,s}=\frac12-a+s, \quad p_{22}^{a,s}=a-s,
\quad
\text{where}\ s\in [0,a],
\end{align*}
respectively.
Then $P(x,y^a)=\{P^{a,s}\}_{s\in  [0,a]}$.  
We see that 
\[
M_a:=\sup_{s \in [0,a]} \big| -f'(p_{11}^{a,s})+f'(p_{12}^{a,s})+f'(p_{21}^{a,s})-f'(p_{22}^{a,s}) \big|+1
\]
is finite  by the continuity of $f'$ on $[0,1]$.
%%%
If we take  $C=(\delta_{ij})_{1\leq i,j \leq 2}\in M_2(\mathbb{R})$ and $\gamma\in (0, 2/M_a)$,
then, for $s\in (0,a)$ with $a\neq 0$,  it turns out that 
\begin{align*}
\frac{\partial}{\partial s}\mathcal{F}^\gamma_{x,y}(P^{a,s})
%%%
=-2+\gamma \left(-f'(p_{11}^{a,s})+f'(p_{12}^{a,s})+f'(p_{21}^{a,s})-f'(p_{22}^{a,s}) \right) 
\leq -2+\gamma M_a<0,
%%%%
\end{align*}
implying that  $P^{a,a}$ is an $(f,\gamma)$-coupling, where  $\supp P^{a,a}\neq \supp x\otimes y^a$.
\end{remark}
%%%%%%%%%%%%%%%%%%%%%%%%%%%%%%%%%%%%%%%%
If we  define $\widetilde{f}:[0,1]\to \mathbb{R}$ by 
\[
\widetilde{f}(r):=f(r)-f(0)-r(f(1)-f(0)),
\]
then $\widetilde{f}$ is again strictly convex, continuous on $[0,1]$ and $\widetilde{f}\in C^1((0,1])$.
We see that $\widetilde{f}(0)=\widetilde{f}(1)=0$.
Moreover, $D_{\widetilde{f}}=D_f$ on  $[0,1] \times [0,1]$.
Thus, when we discuss the relaxed optimal transport problem  via the Bregman divergences associated to $f$,
we may assume $f(0)=f(1)=0$ without loss of generality.
In this case,  the convexity of $f$ yields that $f \leq 0$ on $[0,1]$.
We estimate the difference between $\mathcal{C}$ and $\mathcal{C}^\gamma$.
\begin{proposition}
Given $x,y \in \mathcal{P}_N$ and $\gamma>0$, 
let $P^\gamma\in \Pi(x,y)$ be an $(f,\gamma)$-coupling.  
Then 
%%%%%%%%%%%%%%%%%%%%%%
\begin{align}\label{errors}
\begin{split}
\frac{1}{\gamma}\left(\mathcal{C}^\gamma(x,y)-\mathcal{C}(x,y)\right)
&\leq \Lambda(x,y)- \mathcal{D}_f(P^\gamma, x\otimes y)\leq \Lambda(x,y).
\end{split}
\end{align}
%%%
In the case of $f'(0)=-\infty$, 
\[
\frac{1}{\gamma}\left(\mathcal{C}^\gamma(x,y)-\mathcal{C}(x,y)\right)
=-\mathcal{D}_f(\Pi_\ast, P^\gamma  )
+ \Lambda(x,y)
-\mathcal{D}_f(P^\gamma, x\otimes y ),
\]
where $\Pi_\ast\in \Pi(x,y)$ is an optimal coupling such that $\Lambda(x,y)=\mathcal{D}_f(\Pi_\ast, x\otimes y)$.
On the other hands, if  $f(0)=f(1)=0$, then 
\[
\Lambda(x,y)
\leq
-f'\left(\min_{(i,j)\in \supp x\otimes y}x_iy_j\right)
+
\sum_{(i,j)\in \supp x\otimes y}\left( -f(x_iy_j)+f'(x_iy_j)x_iy_j \right).
\]
\end{proposition}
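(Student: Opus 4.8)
The proof naturally splits into the three displayed assertions, so I would organise it as three self-contained arguments sharing the setup: fix $x,y$ and $\gamma>0$, write $P^\gamma$ for the $(f,\gamma)$-coupling and $\Pi_\ast$ for the optimal coupling realising $\Lambda(x,y)=\mathcal{D}_f(\Pi_\ast, x\otimes y)$.

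\textbf{The inequality \eqref{errors}.} The plan is to exploit the minimality of $P^\gamma$ against the competitor $\Pi_\ast$. Since $\mathcal{F}^\gamma_{x,y}(P^\gamma)\le \mathcal{F}^\gamma_{x,y}(\Pi_\ast)$, expanding both sides gives
\[
\langle C, P^\gamma\rangle + \gamma\,\mathcal{D}_f(P^\gamma, x\otimes y)\ \le\ \langle C, \Pi_\ast\rangle + \gamma\,\Lambda(x,y)\ =\ \mathcal{C}(x,y)+\gamma\,\Lambda(x,y).
\]
Rearranging and using $\langle C, P^\gamma\rangle\ge\mathcal{C}^\gamma(x,y)$ (indeed equality holds by uniqueness of the $(f,\gamma)$-coupling) yields $\gamma^{-1}(\mathcal{C}^\gamma(x,y)-\mathcal{C}(x,y))\le \Lambda(x,y)-\mathcal{D}_f(P^\gamma, x\otimes y)$, and the second inequality is immediate from $\mathcal{D}_f\ge 0$. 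This step is routine.

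\textbf{The identity when $f'(0)=-\infty$.} Here I would use Lemma~\ref{int}: since $f'(0)=-\infty$, we have $\supp P^\gamma=\supp x\otimes y$ and there is $(\alpha,\beta)$ with $c_{ij}+\gamma(f'(p^\gamma_{ij})-f'(x_iy_j))=\alpha_i+\beta_j$ on $\supp x\otimes y$. The key algebraic trick is to pair this relation with $\Pi_\ast-P^\gamma$, whose row and column sums vanish, so that $\sum_{ij}(\alpha_i+\beta_j)(\pi_\ast{}_{ij}-p^\gamma_{ij})=0$. This gives
\[
\langle C, \Pi_\ast - P^\gamma\rangle = -\gamma\sum_{(i,j)\in\supp x\otimes y} f'(p^\gamma_{ij})(\pi_\ast{}_{ij}-p^\gamma_{ij}) + \gamma\sum_{(i,j)} f'(x_iy_j)(\pi_\ast{}_{ij}-p^\gamma_{ij}).
\]
Now I would recognise the right-hand side, after adding and subtracting $f(\pi_\ast{}_{ij})$ and $f(p^\gamma_{ij})$ terms, as $-\gamma\big(\mathcal{D}_f(\Pi_\ast,P^\gamma) - \Lambda(x,y) + \mathcal{D}_f(P^\gamma, x\otimes y)\big)$; this is just the three-point (generalised Pythagorean) identity for Bregman divergences, $\mathcal{D}_f(\Pi_\ast, x\otimes y) = \mathcal{D}_f(\Pi_\ast, P^\gamma)+\mathcal{D}_f(P^\gamma, x\otimes y) + \sum(f'(p^\gamma_{ij})-f'(x_iy_j))(\pi_\ast{}_{ij}-p^\gamma_{ij})$. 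Since $\mathcal{C}(x,y)=\langle C,\Pi_\ast\rangle$ and $\mathcal{C}^\gamma(x,y)=\langle C,P^\gamma\rangle$, dividing by $\gamma$ gives the claimed identity. The main obstacle is bookkeeping: one must be careful that all sums range over $\supp x\otimes y$ (so that $f'(x_iy_j)$ makes sense), and that $\supp\Pi_\ast\subset\supp x\otimes y$ by Lemma~\ref{supp} while $\supp P^\gamma=\supp x\otimes y$ exactly, so no boundary terms are lost.

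\textbf{The upper bound on $\Lambda(x,y)$ when $f(0)=f(1)=0$.} The plan is to bound $\Lambda(x,y)\le\mathcal{D}_f(\Pi_\ast, x\otimes y)$ termwise. On $\supp x\otimes y$ we have $D_f(\pi_\ast{}_{ij}, x_iy_j)=f(\pi_\ast{}_{ij})-f(x_iy_j)-f'(x_iy_j)(\pi_\ast{}_{ij}-x_iy_j)$. Using $f\le 0$ on $[0,1]$ (a consequence of convexity and $f(0)=f(1)=0$) we drop $f(\pi_\ast{}_{ij})\le 0$; then $-f(x_iy_j)+f'(x_iy_j)x_iy_j$ appears as written, and the remaining term $-f'(x_iy_j)\pi_\ast{}_{ij}$ must be controlled. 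Since $f'$ is nondecreasing and $\pi_\ast{}_{ij}\le\min\{x_i,y_j\}$ — wait, more simply $f'(x_iy_j)\ge f'(\min_{(k,l)\in\supp x\otimes y} x_ky_l)$, so $-f'(x_iy_j)\le -f'(\min x_ky_l)$ if that minimum point has $f'\le 0$ there; one checks $f'(r)\le 0$ for small $r$ since $f\le 0$ and $f(0)=0$ force $f$ decreasing near $0$, hence $-f'(\min x_ky_l)\ge 0$. Then $\sum_{(i,j)}(-f'(x_iy_j))\pi_\ast{}_{ij}\le -f'(\min_{(k,l)}x_ky_l)\sum_{(i,j)}\pi_\ast{}_{ij} = -f'(\min_{(k,l)}x_ky_l)$ because $\Pi_\ast\in\mathcal{P}_{N\times N}$. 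Summing the three contributions over $\supp x\otimes y$ gives exactly the stated bound. The one subtlety to watch is the sign of $f'$ at the minimal mass point, which I would dispatch by the elementary observation above; everything else is direct estimation.
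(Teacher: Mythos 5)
Your plan follows the paper's proof essentially step for step: part one by testing $\mathcal{F}^\gamma_{x,y}$ against $\Pi_\ast$, part two by combining Lemma~\ref{int} with the vanishing of $\sum(\alpha_i+\beta_j)(\pi_\ast{}_{ij}-p^\gamma_{ij})$ and the Bregman three-point identity, and part three by a termwise estimate plus monotonicity of $f'$ (the paper gets the same intermediate expression $\sum_{(i,j)\in\supp x\otimes y}(-f(x_iy_j)-f'(x_iy_j)(\pi_\ast{}_{ij}-x_iy_j))$ via convexity of $D_f(\cdot,x_iy_j)$ tested at $0$ and $1$, you get it by dropping $f(\pi_\ast{}_{ij})\le 0$; both are fine). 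Two local corrections are needed. First, in part two there is a sign slip: your own three-point identity gives $\sum(f'(p^\gamma_{ij})-f'(x_iy_j))(\pi_\ast{}_{ij}-p^\gamma_{ij})=\Lambda(x,y)-\mathcal{D}_f(\Pi_\ast,P^\gamma)-\mathcal{D}_f(P^\gamma,x\otimes y)$, so the right-hand side of your display for $\langle C,\Pi_\ast-P^\gamma\rangle$ equals $+\gamma\bigl(\mathcal{D}_f(\Pi_\ast,P^\gamma)-\Lambda(x,y)+\mathcal{D}_f(P^\gamma,x\otimes y)\bigr)$, not $-\gamma(\cdots)$ as written; taken literally your bookkeeping would produce the negative of the asserted identity, whereas the corrected sign, together with $\langle C,\Pi_\ast-P^\gamma\rangle=\mathcal{C}(x,y)-\mathcal{C}^\gamma(x,y)$, yields exactly the claim.

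Second, in part three the side claim that $f'$ is nonpositive at $\min_{(k,l)\in\supp x\otimes y}x_ky_l$ is both unnecessary and false in general: $f\le 0$ with $f(0)=f(1)=0$ only forces $f'<0$ near $0$, and the minimal product need not be small (for $f(r)=r\log r$, $x$ a point mass and $y=(1/2,1/2)$ give $\min x_iy_j=1/2$ with $f'(1/2)=1-\log 2>0$). No sign information is required: from $-f'(x_iy_j)\le -f'\bigl(\min_{(k,l)\in\supp x\otimes y}x_ky_l\bigr)$ by monotonicity of $f'$, together with $\pi_\ast{}_{ij}\ge 0$ and $\sum_{(i,j)\in\supp x\otimes y}\pi_\ast{}_{ij}=1$ (which uses $\supp\Pi_\ast\subset\supp x\otimes y$, Lemma~\ref{supp}), the estimate $\sum_{(i,j)\in\supp x\otimes y}(-f'(x_iy_j))\pi_\ast{}_{ij}\le -f'\bigl(\min_{(k,l)\in\supp x\otimes y}x_ky_l\bigr)$ holds regardless of the sign of $f'$ at the minimum, which is exactly how the paper argues. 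With these two repairs your argument coincides with the paper's.
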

%%%
\begin{proof}
It follows from  the definition of $\mathcal{F}_{x,y}^\gamma$ with the nonnegativity of $\mathcal{D}_f$ that 
\begin{align*}
 \mathcal{C}^\gamma(x,y)
 &\leq
 \mathcal{C}^\gamma(x,y)+\gamma \mathcal{D}_f(P^\gamma, x\otimes y)
= \mathcal{F}_{x,y}^\gamma(P^\gamma)
\leq \mathcal{F}_{x,y}^\gamma(\Pi_\ast)=\mathcal{C}(x,y)+\gamma \Lambda(x,y),
\end{align*}
implying the inequalities in \eqref{errors}.
Assume $f'(0)= -\infty$.
By Lemma~\ref{int},  there exists $(\alpha, \beta)\in \mathbb{R}^{\supp x\otimes y}$ satisfying~\eqref{lag}.
Since we have
\[
\sum_{(i,j)\in \supp x\otimes y} (\alpha_i +\beta_j)\pi_{ij}
=\sum_{i\in \supp x} \alpha_i x_i +\sum_{j\in \supp y} \beta_j y_j
\quad \text{for\ } \Pi\in \Pi(x,y),
\]
we multiply \eqref{lag} by $\pi_\ast{}_{ij}-p^\gamma_{ij}$ and  take the sum over $(i,j)\in \supp x\otimes y$ to have
\begin{align*}
\frac{1}{\gamma}\left(\mathcal{C}^\gamma(x,y)-\mathcal{C}(x,y)\right)
&=\sum_{(i,j)\in \supp x\otimes y}
(\pi_\ast{}_{ij}-p^\gamma_{ij})\left(f'(p^\gamma_{ij})-f'(x_iy_j)\right)\\
&=
-\mathcal{D}_f(\Pi_\ast, P^\gamma)+\mathcal{D}_f(\Pi_\ast, x\otimes y)-\mathcal{D}_f(P^\gamma, x\otimes y).
\end{align*}

In the case of  $f(0)=f(1)=0$,  the convexity of $D_f(\cdot, r_0)$ on $[0,1]$ yields that 
\begin{align*}
\Lambda(x,y)
&=\mathcal{D}_f(\Pi_\ast, x\otimes y)
=\sum_{(i,j)\in \supp x\otimes y} D_f(\pi_\ast{}_{ij}, x_iy_j )\\
&\leq
\sum_{(i,j)\in \supp x\otimes y}\left\{
(1-\pi_\ast{}_{ij})  D_f(0, x_iy_j )+\pi_\ast{}_{ij}  D_f(1, x_iy_j )\right\}\\
%%%
&=
\sum_{(i,j)\in \supp x\otimes y}\left( -f(x_iy_j)-f'(x_iy_j)(\pi_\ast{}_{ij}- x_iy_j)\right)\\
&\leq 
-f'\left(\min_{(i,j)\in \supp x\otimes y}x_iy_j\right)
+
\sum_{(i,j)\in \supp x\otimes y}\left( -f(x_iy_j)+f'(x_iy_j)x_iy_j \right),
\end{align*}
where  the last inequality follows from the monotonicity of $f'$ on $(0,1]$.
This completes the proof of the proposition.
\end{proof}
%%%%%%%%%%%%%%%%%%%%%%%%%%%%%
We prove the continuity of $(f,\gamma)$-couplings.
%%%%%%%%%%%%%%%%%%
The limit of $(f,\gamma)$-coupling between two given points in $\mathcal{P}_N$ 
as $\gamma\downarrow  0$ and $\gamma\uparrow \infty$ are discussed in \cite{DPR}*{Properties~7--9} for Bregman divergences,
and in \cite{PC}*{Proposition~4.1} for the Kullback--Leibler divergences.
%%%
In Euclidean setting, %where the cost function is the squared distance function and $f(r)=r\log r$, 
$\mathcal{C}^{\gamma}$ is  lower semicontinuous, but not continuous in general (see \cite{CDPS}*{Lemma~2.4}).
\if0
In Euclidean setting, where the cost function is the squared distance function and $f(r)=r\log r$, 
the lower semicontinuity of  $\mathcal{C}^{\gamma}(x,y)$ in $(x,y)$ and 
the convergence $\mathcal{C}^{\gamma}(x,y)\to\mathcal{C}(x,y)$ as $\gamma \downarrow 0$
are proved in \cite{CDPS}*{Lemma~2.4, Theorem~2.7}.
\fi
%%%%%%%%%%%%%%%%%%%
\begin{theorem}\label{dconti}
Let $((x^n, y^n))_{n\in \mathbb{N}}$ be  a  sequence in $(\mathcal{P}_N \times \mathcal{P}_N, \|\cdot\|_2)$ converging to $(x,y)$
and $(\gamma^n)_{n\in \mathbb{N}}$ a sequence  in $(0,\infty)$ converging to $\gamma\in[0,\infty]$, respectively.
%%%
For an $(f,\gamma_n)$-coupling $P^{\gamma_n}$ between $x^n,y^n$,
\begin{align*}
\lim_{n\to \infty} P^{\gamma_n}=
\begin{cases}
x\otimes y &\gamma=\infty,\\
P^{\gamma} &\gamma\in (0,\infty),\\
\Pi_\ast &\gamma=0,
\end{cases}
\end{align*}
where $P^{\gamma}$ is an $(f,\gamma)$-coupling between $x,y$,
and $\Pi_\ast$ is  an optimal coupling between $x,y$ such that  $\mathcal{D}_f(\Pi_\ast , x\otimes y)=\Lambda(x,y)$.
%%%%%%
Hence the function on $(0,\infty) \times \mathcal{P}_N \times \mathcal{P}_N$
sending $(\gamma, x,y)$ to $\mathcal{C}^{\gamma}(x,y)$  is continuous 
and can be extended to $[0,\infty] \times \mathcal{P}_N \times \mathcal{P}_N$ continuously.
\end{theorem}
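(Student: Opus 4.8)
The plan is to combine sequential compactness of $(\mathcal{P}_{N\times N},\|\cdot\|_2)$ with a $\Gamma$-convergence-type identification of the limit. Fix $(x^n,y^n)\to(x,y)$ and $\gamma_n\to\gamma\in[0,\infty]$, and let $P^{\gamma_n}$ be an $(f,\gamma_n)$-coupling between $x^n,y^n$. Since the space is sequentially compact it suffices to show that every convergent subsequence $P^{\gamma_{n(l)}}\to P$ has the asserted limit; passing the marginal identities to the limit as in~\eqref{marginal} gives $P\in\Pi(x,y)$ in all cases, and throughout I will use the lower semicontinuity of $\mathcal{D}_f$ on $\mathcal{P}_{N\times N}\times\mathcal{P}_{N\times N}$ (inherited from that of $D_f$) and the continuity of $\langle C,\cdot\rangle$. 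The case $\gamma=\infty$ is then immediate: minimality gives $\langle C,P^{\gamma_n}\rangle+\gamma_n\mathcal{D}_f(P^{\gamma_n},x^n\otimes y^n)\le\mathcal{F}^{\gamma_n}_{x^n,y^n}(x^n\otimes y^n)=\langle C,x^n\otimes y^n\rangle$, so $\mathcal{D}_f(P^{\gamma_n},x^n\otimes y^n)\le 2\max_{i,j}|c_{ij}|/\gamma_n\to0$ and lower semicontinuity forces $\mathcal{D}_f(P,x\otimes y)=0$, i.e.\ $P=x\otimes y$.

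For $\gamma\in(0,\infty)$ I would run the usual two-sided argument. The lower bound $\mathcal{F}^\gamma_{x,y}(P)\le\liminf_l\mathcal{F}^{\gamma_{n(l)}}_{x^{n(l)},y^{n(l)}}(P^{\gamma_{n(l)}})$ follows from lower semicontinuity of $\mathcal{D}_f$, continuity of $\langle C,\cdot\rangle$, nonnegativity of $\mathcal{D}_f$ and $\gamma_n\to\gamma>0$. For the matching upper bound I would build a recovery sequence $Q^n\in\Pi(x^n,y^n)$ with $Q^n\to P^\gamma$ and $\mathcal{D}_f(Q^n,x^n\otimes y^n)\to\mathcal{D}_f(P^\gamma,x\otimes y)$, so that $\mathcal{F}^{\gamma_n}_{x^n,y^n}(Q^n)\to\mathcal{F}^\gamma_{x,y}(P^\gamma)$; comparing $P^{\gamma_n}$ with $Q^n$ and chaining the inequalities yields $\mathcal{F}^\gamma_{x,y}(P)\le\mathcal{F}^\gamma_{x,y}(P^\gamma)=\min_{\Pi(x,y)}\mathcal{F}^\gamma_{x,y}$, and uniqueness of the $(f,\gamma)$-coupling forces $P=P^\gamma$. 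For $\gamma=0$ the same recovery sequence with target $\Pi_\ast$ (now $\gamma_n\mathcal{D}_f(Q^n,x^n\otimes y^n)\to0$ since $\mathcal{D}_f(Q^n,x^n\otimes y^n)\to\Lambda(x,y)<\infty$) already gives $\langle C,P\rangle\le\langle C,\Pi_\ast\rangle=\mathcal{C}(x,y)$, so $P$ is optimal; to upgrade this to $P=\Pi_\ast$ I would compare $P^{\gamma_n}$ with optimal couplings $\Pi^n_\ast\in\Pi(x^n,y^n)$ converging to $\Pi_\ast$ built as in the converse half of Corollary~\ref{convop}, obtaining $\mathcal{D}_f(P^{\gamma_n},x^n\otimes y^n)\le\mathcal{D}_f(\Pi^n_\ast,x^n\otimes y^n)$, and pass to the limit with lower semicontinuity on the left; the right-hand side tends to $\Lambda(x,y)$ by the computation in the proof of Proposition~\ref{lambdaconti} whenever $f'(0)\in\mathbb{R}$, $f(r)=r\log r$, or $(x,y)$ is interior.

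The common technical core — and, I expect, the main obstacle — is the recovery sequence, notably when $f'(0)=-\infty$ and $(x,y)$ lies on the boundary of $\mathcal{P}_N\times\mathcal{P}_N$. I would take $Q^n:=\delta_n Z+(1-\delta_n)R^n$ with $\delta_n,\widetilde x^n,\widetilde y^n$ as in~\eqref{delta}, $R^n\in\Pi(\widetilde x^n,\widetilde y^n)$ to be chosen, and $Z$ the relevant target ($P^\gamma$ or $\Pi_\ast$); note $z_{ij}=0$ for $(i,j)\notin\supp x\otimes y$ by Lemma~\ref{supp} (and, for $Z=P^\gamma$ with $f'(0)=-\infty$, one even has $\supp Z=\supp x\otimes y$ by Lemma~\ref{int}). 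Then $Q^n\to Z$ automatically, and for $(i,j)\in\supp x\otimes y$ the summand $D_f(q^n_{ij},x^n_iy^n_j)$ converges by continuity of $D_f$ on $[0,1]\times(0,1]$. The delicate summands are those with $(i,j)\in\supp x^n\otimes y^n\setminus\supp x\otimes y$, where $z_{ij}=0$: if exactly one of $x_i,y_j$ vanishes the summand is controlled using $\varepsilon f'(\varepsilon)\to0$ together with the surviving marginal, whereas when $x_i=y_j=0$ one must choose $R^n$ so as to reroute the (vanishing) mass of the rows with $x_i=0$ and of the columns with $y_j=0$ onto indices carrying a nonvanishing marginal, leaving the doubly-vanishing entries empty; this rerouting is feasible precisely because the square-root correction in~\eqref{delta} makes that total mass $o(1-\delta_n)$. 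For the remaining $\gamma=0$ boundary case with $f'(0)=-\infty$ and $f$ not of logarithmic type, $\Lambda$ need not be continuous (cf.\ the remark following Proposition~\ref{lambdaconti}), so the comparison above must be supplemented; here I would combine the optimality of $P$ already established with the Euler--Lagrange relation~\eqref{lag} of Lemma~\ref{int} to force $P=\Pi_\ast$.

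Granting the three limits, compactness upgrades subsequential convergence to $\lim_nP^{\gamma_n}=x\otimes y,\ P^\gamma,\ \Pi_\ast$ respectively; since the $(f,\gamma_n)$-coupling is unique for $\gamma_n>0$ we have $\mathcal{C}^{\gamma_n}(x^n,y^n)=\langle C,P^{\gamma_n}\rangle\to\langle C,\lim_nP^{\gamma_n}\rangle$, which equals $\mathcal{C}^\gamma(x,y)$ for $\gamma\in(0,\infty)$ — giving continuity on $(0,\infty)\times\mathcal{P}_N\times\mathcal{P}_N$ — and equals $\mathcal{C}(x,y)$ resp.\ $\langle C,x\otimes y\rangle$ for $\gamma=0$ resp.\ $\gamma=\infty$, yielding the continuous extension to $[0,\infty]\times\mathcal{P}_N\times\mathcal{P}_N$.
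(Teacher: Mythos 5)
Your compactness-plus-recovery-sequence scheme is a genuinely different route from the paper's (which never builds a recovery sequence in the delicate regime, but instead proves convergence of $\mathcal{D}_f(P^{\gamma_n},x^n\otimes y^n)$ via a first-variation claim and then works with the Lagrange relations of Lemma~\ref{int}), and for $\gamma=\infty$, for $\gamma\in(0,\infty)$, and for $\gamma=0$ with $f'(0)\in\mathbb{R}$, $f(r)=r\log r$, or $(x,y)$ interior, your outline can be made to work: any $Q^n\in\Pi(x^n,y^n)$ automatically controls the entries of $\supp x^n\otimes y^n\setminus\supp x\otimes y$ having exactly one vanishing marginal (via $q^n_{ij}\le\min\{x^n_i,y^n_j\}$ and $\varepsilon f'(\varepsilon)\to0$), a cell forced to be $0$ contributes $f(0)-f(x^n_iy^n_j)+x^n_iy^n_jf'(x^n_iy^n_j)\to0$, and the swap/rerouting argument avoiding the doubly-vanishing block is feasible, so $\mathcal{D}_f(Q^n,x^n\otimes y^n)\to\mathcal{D}_f(Z,x\otimes y)$ for the target $Z$; combined with lower semicontinuity of $\mathcal{D}_f$ and uniqueness of the $(f,\gamma)$-coupling this identifies the limit. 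This buys you a shortcut past the paper's multiplier analysis for $\gamma\in(0,\infty)$, at the price of writing out the rerouting construction carefully.

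The genuine gap is the case you dispose of in one sentence: $\gamma=0$, $(x,y)$ on the boundary of $\mathcal{P}_N\times\mathcal{P}_N$ (so $S'\neq\emptyset$) and $f'(0)=-\infty$ with $f$ not of logarithmic type. There your comparison $\mathcal{D}_f(P^{\gamma_n},x^n\otimes y^n)\le\mathcal{D}_f(\Pi^n_\ast,x^n\otimes y^n)$ gives nothing, since the right-hand side need not converge to $\Lambda(x,y)$ (this is exactly the failure of continuity of $\Lambda$ noted after Proposition~\ref{lambdaconti}), and your proposed substitute --- ``combine the optimality of $P$ with the relation \eqref{lag}'' --- does not parse into an argument: \eqref{lag} is a statement about $(f,\gamma)$-couplings with $\gamma>0$, whereas the limit $P$ is merely an optimal coupling, and optimality alone cannot single out $\Pi_\ast$ among the possibly many optimal couplings. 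What is actually needed, and what occupies the bulk of the paper's proof, is to use \eqref{lagn} at level $n$ (Lemma~\ref{int} applied to $P^{\gamma_n}$), test it against both $\Pi:=\lim_n P^{\gamma_n}$ and $\Pi_\ast$ (both have marginals $x,y$ and cost $\mathcal{C}(x,y)$), which after dividing by $\gamma_n$ yields
\[
\sum_{(i,j)\in \supp \Pi}\bigl(f'(p^{\gamma_n}_{ij})-f'(x^n_iy^n_j)\bigr)\pi_{ij}
=\sum_{(i,j)\in \supp \Pi_\ast}\bigl(f'(p^{\gamma_n}_{ij})-f'(x^n_iy^n_j)\bigr)(\Pi_\ast)_{ij};
\]
the left-hand side converges, forcing $\supp\Pi_\ast\subset\supp\Pi$ (otherwise a term on the right tends to $-\infty$ while all terms are bounded above) and, in the limit, the vanishing of the directional derivative of $\mathcal{D}_f(\cdot,x\otimes y)$ at $\Pi$ toward $\Pi_\ast$; strict convexity together with $\Lambda(x,y)\le\mathcal{D}_f(\Pi,x\otimes y)$ then forces $\Pi=\Pi_\ast$. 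No mechanism of this kind appears in your sketch, so the hardest case of the theorem is left unproved.
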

%%%%%%%%%%%%%%%%%%%%
\begin{proof}
By the compactness of  $(\mathcal{P}_{N\times N}, \|\cdot\|_2)$, $(P^{\gamma_{n}})_{n\in \mathbb{N}}$ contains a convergent subsequence,
still denoted by $(P^{\gamma_{n}})_{n\in \mathbb{N}}$.
An argument similar to that in~\eqref{marginal} yields that  $\Pi:=\lim_{n\to \infty} P^{\gamma_n} \in \Pi(x,y)$.
We may assume that
$\supp x \otimes y \subset  \supp  x^n \otimes y^n$ 
and $\supp  x^n \otimes y^n$ does not depend on $n\in \mathbb{N}$
without loss of generally. 
Set 
\[
S':= \supp x^n \otimes y^n \setminus \supp x\otimes y
\]
Then $\pi_{ij}=\lim_{n\to \infty}p^{\gamma_n}_{ij}=0$ for $(i,j)\in S'$.
%
%%%%%%%%%%%%%%%%%%
\begin{claim}\label{claim}
Assume one of $S'=\emptyset$,   $f'(0)\in \mathbb{R}$  or $\gamma\neq0$.
Then
\[
\lim_{n\to \infty}  \mathcal{D}_f(P^{\gamma_{n}},x^{n}\otimes y^{n}) 
=\mathcal{D}_f(\Pi,x\otimes y).
\]
\end{claim}
\noindent
{\it Proof.}
Recall that 
\begin{align*}
\mathcal{D}_f(P^{\gamma_{n}},x^{n}\otimes y^{n})
=\sum_{(i,j)\in \supp x^n \otimes y^n}  
f(p^{\gamma_{n}}_{ij})-f(x^{n}_i y^{n}_j)-f'(x^{n}_i y^{n}_j)(p^{\gamma_{n}}_{ij}-x^{n}_i  y^{n}_j).
\end{align*}
Since $f\in C([0,1])\cap C^1((0,1])$ and $\lim_{\varepsilon\downarrow 0} \varepsilon f'(\varepsilon)=0$, 
it is enough to show that
\[
\lim_{n\to \infty} 
f'(x_i^{n}y^{n}_j)p^{\gamma_{n}}_{ij}=0
\quad \text{for\ }(i,j)\in S'.
\]
This trivially holds if either $S'=\emptyset$ or $f'(0)\in \mathbb{R}$.

%%%
Assume $S'\neq\emptyset$ and  $f'(0)=-\infty$.
Lemma~\ref{int} yields that  $\supp P^{\gamma_{n}}=\supp x^{n} \otimes y^{n}$.
% %
Fix  $(i_1,j_1)\in S'$ and $(i_2,j_2)\in \supp \Pi$.
Then $f'(x_{i_1}^{n}y^{n}_{j_1})p^{\gamma_{n}}_{i_1j_1} \leq 0$ for $n\in \mathbb{N}$ large enough
and
\[
 \lim_{n\to \infty}  f'(p^{\gamma_n}_{i_2j_2} ) p^{\gamma_n}_{i_1j_1} =
 \lim_{n\to \infty}  f'(x_{i_2}^n y_{j_2}^n) p^{\gamma_n}_{i_1j_1} =0.
\] 
By  $\lim_{\varepsilon\downarrow 0} \varepsilon f'(\varepsilon)=0$,
$\lim_{n\to \infty}  f'(p^{\gamma_n}_{i_1j_1} ) p^{\gamma_n}_{i_1j_1} =0$.
In addition,  it follows from  $p^{\gamma_n}_{i_1j_1} \leq x_{i_1}^n$ that 
\[
\lim_{n\to \infty} \left| f'(x_{i_1}^n y_{j_2}^n)  p^{\gamma_n}_{i_1j_1}\right|
\leq \lim_{n\to \infty} \left| f'(x_{i_1}^n y_{j_2}^n) \right|\cdot x_{i_1}^n y_{j_2}^n \cdot \frac{1}{y_{j_2}^n} =0.
\]
Similarly, $\lim_{n\to \infty}f'(x_{i_2}^n y_{j_1}^n) p^{\gamma_n}_{i_1j_1}  =0$ holds.
For $(i,j)=(i_1, j_2), (i_2, j_1)$, 
if $(i, j)\in \supp \Pi$,  $\lim_{n\to \infty} f'( p^{\gamma_n}_{ij}) p^{\gamma_n}_{i_1j_1} =0$ trivially holds.
In the case of  $(i, j)\notin \supp \Pi$, then $ f'( p^{\gamma_n}_{ij}) p^{\gamma_n}_{i_1j_1} \leq 0$ for $n\in \mathbb{N}$ large enough.
For  $\varepsilon\in \mathbb{R}$  small enough, if we set $\Pi^{\varepsilon}\in M_N(\mathbb{R})$ by 
\[
\pi^\varepsilon_{ij}
:=\begin{cases}
p^{\gamma_n}_{ij}+\mathrm{sgn}(\sigma)\varepsilon   &\text{if\ }(i,j)= (i_{\sigma(m)},j_m) \text{\ with $m=1,2$ and $\sigma\in \mathfrak{S}_2$} ,\\
p^{\gamma_n}_{ij} &\text{otherwise},
\end{cases}
\]
then $\Pi^{\varepsilon }\in \Pi(x^n, y^n)$ and 
\begin{align*}
0=
\frac{d}{d\varepsilon}
\mathcal{F}^\gamma_{x^n,y^n}(\Pi^{\varepsilon})\bigg|_{\varepsilon=0}
%%%%
=\!\sum_{m\in \{1,2\}, \sigma\in \mathfrak{S}_2} 
\mathrm{sgn}(\sigma)\left\{ c_{i_{\sigma(m)},j_m} +\gamma_n \left( f'(p^{\gamma_n}_{i_{\sigma(m)},j_m})-f'(x_{i_{\sigma(m)}}^ny_{j_m}^n)\right) \right\}.
\end{align*}
If $\gamma\neq 0$, then 
\begin{align*}
0&=-\limsup_{n\to \infty}
p^{\gamma_n}_{i_1j_1}
\sum_{m=1,2, \sigma\in \mathfrak{S}_2} 
\mathrm{sgn}(\sigma)\left\{ \frac{c_{i_{\sigma(m)},j_m}}{\gamma_n} + \left( f'(p^{\gamma_n}_{i_{\sigma(m)},j_m})-f'(x_{i_{\sigma(m)}}^ny_{j_m}^n)\right) \right\}\\
&=
\liminf_{n\to \infty} p^{\gamma_n}_{i_1j_1}\left( f'(x_{i_1}^n y_{j_1}^n) +f'(p^{\gamma_n}_{i_1j_2}) +f'(p^{\gamma_n}_{i_2j_1}) \right)\\
&
\leq  \liminf_{n\to \infty} p^{\gamma_n}_{i_1j_1} f'(x_{i_1}^n y_{j_1}^n) 
\leq  \limsup_{n\to \infty} p^{\gamma_n}_{i_1j_1} f'(x_{i_1}^n y_{j_1}^n) 
\leq 0,
\end{align*}
implying $\lim_{n\to \infty}  f'(x_{i_1}^n y_{j_1}^n) p^{\gamma_n}_{i_1j_1}=0$.
This completes the proof of the claim.
$\hfill \diamondsuit$
%%%%%%%%%%%%%%

%%%%
We see that 
\begin{align*}
\frac{1}{\gamma_n}  \langle  C, P^\gamma \rangle+\mathcal{D}_f(P^{\gamma_n},x^n\otimes y^n) 
&=
\frac{1}{\gamma_n}
\mathcal{F}^{\gamma_n}_{x^n,y^n}(P^{\gamma_n}) \\
&\leq  
 \frac{1}{\gamma_n} 
 \mathcal{F}^\gamma_{x^n,y^n}(x^n\otimes y^n)
=\frac{1}{\gamma_n} \langle  C, x^n\otimes y^n\rangle.
\end{align*}
Since $|\langle  C, \Pi\rangle|$ is uniformly bounded in $\Pi \in \mathcal{P}_{N\times N}$ by $\max_{1\leq i, j\leq N } |c_{ij}|$,
in the case of $\gamma=\infty$,
letting $n\to \infty$ and applying Claim \ref{claim} lead to
$\mathcal{D}_f(\Pi, x\otimes y)\leq 0$, that is, $\Pi=x\otimes y$.
%%%%%%%

Assume one of $S'=\emptyset$ or $f'(0)\in \mathbb{R}$.
Then, similarly to the proof  of Claim~\ref{claim},
\[
\lim_{n\to \infty}\mathcal{D}_f (\Pi^n, x^n\otimes y^n)
=\mathcal{D}_f (\lim_{n\to \infty}\Pi^n, x\otimes y)
\]
holds for any convergent sequence of $\Pi^n \in \Pi(x^n,y^n)$.
Let $\delta_n\in(0,1)$ and $\widetilde{x}^n,\widetilde{y}^n \in \mathcal{P}_N$ as in \eqref{delta}.
For $\gamma\in (0,\infty)$, it follows from the definition of $P^\gamma$ that 
\begin{align*} 
\mathcal{F}^\gamma_{x,y}(P^\gamma)
\leq
\mathcal{F}^\gamma_{x,y}(\Pi)
&=
\lim_{n\to\infty}\mathcal{F}^{\gamma_n}_{x^n,y^n}( P^{\gamma_n}) \\
&\leq 
\lim_{n\to\infty} \mathcal{F}^{\gamma_n}_{x^n,y^n}(\delta_n P^\gamma +(1-\delta_n) \widetilde{x}^n \otimes \widetilde{y}^n)
=
\mathcal{F}^\gamma_{x,y}(P^\gamma).
\end{align*}
By the uniqueness  of an $(f,\gamma)$-coupling,  we find that $\Pi=P^\gamma$.
On the other hand, there exists a sequence of optimal couplings $\Pi^n \in \Pi(x^n,y^n)$ such that 
$\lim_{l\to \infty}\Pi^{n(l)}= \Pi_\ast$ for some $(n(l))_{l\in\mathbb{N}}\subset \mathbb{N}$ by Corollary~\ref{convop}. 
In the case of  $\gamma=0$,  it turns out that 
\begin{align*} 
\mathcal{C}(x,y)\leq \langle C, \Pi\rangle
&=
\lim_{n\to\infty}\mathcal{F}^{\gamma_n}_{x^n,y^n}( P^{\gamma_n})
\leq 
\lim_{n\to\infty} \mathcal{F}^{\gamma_n}_{x^n,y^n}(\Pi^n)
=\langle C, \Pi_\ast\rangle=
\mathcal{C}(x,y), \\
%%%%
\mathcal{D}_f(\Pi, x\otimes y)-\Lambda(x,y)
&=\lim_{l\to \infty} \left(
\mathcal{D}_f(P^{\gamma_{n(l)}},x^{n(l)}\otimes y^{n(l)}) -  
\mathcal{D}_f(\Pi^{n(l)}, x^{n(l)}\otimes y^{n(l)})\right)\\
&\leq  
\limsup_{l\to \infty}
\frac{1}{\gamma_{n(l)}}\left(\langle  C, \Pi^{n(l)}\rangle-\langle C, P^{\gamma_{n(l)}}\rangle \right)
\leq0.
\end{align*}
Thus $\Pi \in \Pi(x,y)$ is optimal and  $\mathcal{D}_f(\Pi, x\otimes y)\leq \Lambda(x,y)$, that is, $\Pi=\Pi_\ast$.
%%%%%%%%%

It remains to prove the case $\gamma\in [0,\infty)$ under the conditions $S'\neq \emptyset$ and $f'(0)=-\infty$.
In this case,  it follows from Lemma~\ref{int} that  there exists $(\alpha^n, \beta^n)\in \mathbb{R}^{\supp x^n\otimes y^n}$ such that 
\begin{equation}\label{lagn}
c_{ij}+\gamma_n \left(f'(p^{\gamma_n}_{ij})-f'(x_i^ny_j^n)\right)=\alpha_i^n+\beta_j^n
\qquad \text{for\ }(i,j)\in \supp x^n\otimes y^n.
\end{equation}
If $(i,j)\in \supp \Pi$, then letting $n\to \infty$ leads to
\[
c_{ij}+\gamma \left(f'(\pi_{ij})-f'(x_iy_j)\right)=\lim_{n\to \infty}( \alpha_i^n+\beta_j^n).
\]
%%%
Moreover, if  $(i,j),(i,j')\in \supp \Pi$, then
\begin{align*}
\lim_{n \to \infty} \left(\beta^{n}_j -\beta^{n}_{j'}\right)
&=
c_{ij}+\gamma \left(f'(\pi_{ij})-f'(x_iy_j)\right)
-
\left\{ c_{ij'}+\gamma \left(f'(\pi_{ij'})-f'(x_iy_{j'})\right) \right\}, 
\end{align*}
hence the right-hand side is independent of $i$.
Similarly, if $(i,j),(i',j)\in \supp \Pi$, then
\[
c_{ij}+\gamma \left(f'(\pi_{ij})-f'(x_iy_j)\right)
-
\left\{ c_{i'j}+\gamma \left(f'(\pi_{i'j})-f'(x_{i'}y_{j})\right) \right\}
\]
is independent of $j$.
Thus  there exists $(\alpha, \beta)\in \mathbb{R}^{\supp \Pi}$ solving the linear equations of the form
\begin{equation}\label{lineareq}
\alpha_i+\beta_j =
c_{ij}+\gamma \left(f'(\pi_{ij})-f'(x_iy_j)\right) \quad (i,j)\in \supp \Pi.
\end{equation}

In the case  of $\gamma \in (0,\infty)$,
if  $(i,j),(i',j')\in \supp \Pi$, then $(i,j'),(i',j)\in \supp x\otimes y$  and
\begin{align*}
\lim_{n \to \infty}  
\left( f'( p^{\gamma_n}_{ij'})+f'( p^{\gamma_n}_{i'j}) \right)
=
\frac1{\gamma}\left( \alpha_i +\beta_j+ \alpha_{i'} +\beta_{j'} -c_{ij'}-c_{i'j}\right)
+ f'(x_iy_{j'})+f'(x_{i'}y_{j})
\end{align*}
by \eqref{lagn},  which implies $(i,j'), (i,j)\in \supp \Pi$, that is,  $\supp \Pi=\supp x\otimes y$.
Assume that  $\min\{|\supp x|,|\supp y|\}>1$, otherwise $|\Pi(x,y)|=1$ hence $\Pi=P^\gamma$ holds.
If we set   
\begin{align*}
\begin{split}
I&:=\{ i \ |\  i \in \supp x,\  i \neq  \max_{k\in \supp x}k\}, \quad
J:=\{ j \ |\  j \in \supp y,\  j \neq  \max_{k\in \supp y}k\}, \\
O&:=\left\{ Z  \in (0,1)^{I \times J}   \biggm|   \sum_{k\in J} z_{ik}<x_i, \quad   \sum_{k\in I} z_{kj}<y_j, \quad \text{for\ } (i,j)\in I\times J \right\},
\end{split}
\end{align*}
and define $\Phi:\overline{O}\to M_{N}(\mathbb{R})$ by 
\begin{equation}\label{coord}
\phi_{ij}(Z)
:=\begin{cases}
\displaystyle z_{ij} & \text{if\ } (i,j)\in I \times J,\\
\displaystyle
x_i-\sum_{k\in J} z_{ik} &\displaystyle  \text{if\ } j= \max_{k\in \supp y}k,  i\in I,\\
\displaystyle
y_j -\sum_{k\in I} z_{kj} & \displaystyle \text{if\ } i= \max_{k\in \supp x}k,  j\in J,\\
\displaystyle
1-\sum_{i \in I} x_i -\sum_{j \in J} y_j+ \sum_{(i,j)\in I\times J} z_{ij} &\displaystyle  \text{if\ } i= \max_{k\in \supp x}k,j= \max_{k\in \supp y}k, \\
0&\text{otherwise},
\end{cases}
\end{equation}
then $\Pi(x,y)=\Phi(\overline{O})$ and $P^\gamma,  \Pi \in \Phi(O)$.
Moreover,  $\mathcal{F}^\gamma_{x,y}\circ \Phi$ is strictly convex on~$O$ and  
\[
\nabla  \mathcal{F}^\gamma_{x,y} \circ \Phi |_{\Phi^{-1}(\Pi)}=0
\]
by \eqref{lineareq}.
Thus $\Pi$ is a unique minimizer of $\mathcal{F}^\gamma_{x,y}\circ \Phi $ on $O$, that is, $\Pi=P^\gamma$.

Assume $\gamma=0$.
Then $\supp \Pi$ is $C$-cyclically monotone  by \eqref{lineareq} hence $\Pi$ is optimal by Proposition~\ref{cmono}.
Furthermore,
multiplying \eqref{lagn} by $\pi_{ij}$ (resp.\,$\pi_{\ast}{}_{ij})$ and taking sum over $(i,j)\in \supp x\otimes y$ lead to
\begin{align*}
\sum_{(i,j)\in \supp \Pi }
\left(f'(p^{\gamma_n}_{ij})-f'(x_i^ny_j^n)\right)\pi_{ij}
&
=\frac{1}{\gamma_n}\left( \sum_{i\in \supp x } \alpha_i^n x_i +\sum_{ j\in \supp y } \beta_j^n y_j-\mathcal{C}(x,y)\right)\\
%%%%%
&=
\sum_{(i,j)\in \supp \Pi_\ast }
\left(f'(p^{\gamma_n}_{ij})-f'(x_i^ny_j^n)\right)\pi_\ast{}_{ij}.
\end{align*}
Since the left-hand side converges as $n\to \infty$, 
so does the right-hand side,
which implies $\supp \Pi_\ast \subset \supp \Pi$ and
\[
\sum_{(i,j)\in \supp \Pi }\left(f'(\pi_{ij})-f'(x_iy_j)\right)(\pi_\ast{}_{ij}-\pi_{ij})=0.
\]
%%%%
For $t\in [0,1]$,  setting  $\Pi^t:=(1-t) \Pi +t \Pi_\ast \in \Pi(x,y)$, we find that
\[
\lim_{t \downarrow 0}\frac{d}{dt} \mathcal{D}_f(\Pi^t, x\otimes y)
=\sum_{(i,j)\in \supp \Pi }\left(f'(\pi_{ij})-f'(x_iy_j)\right)(\pi_\ast{}_{ij}-\pi_{ij})=0.
\]
By the strict convexity $\mathcal{D}_f(\cdot, x\otimes y)$ on $\Pi(x,y)$,  if $\Pi\neq \Pi_\ast$, then  $\mathcal{D}_f(\Pi^t, x\otimes y)$ is strictly increasing in $t\in[0,1]$
 hence
\[
\Lambda(x,y)\leq \mathcal{D}_f(\Pi, x\otimes y)=\mathcal{D}_f(\Pi^0, x\otimes y)
<\mathcal{D}_f(\Pi^1, x\otimes y)=\mathcal{D}_f(\Pi_\ast, x\otimes y)=\Lambda(x,y),
\]
which is a contradiction.
Thus  $\Pi=\Pi_\ast$ and the proof of Theorem~\ref{dconti} is complete.
\end{proof}
%%%%%%%%%%%%%%%%%%%%%%%%%%%%%%%%%%%%%%%%%
\noindent
%%%%%%%
We prove the monotonicity of the functions  on $(0,\infty)$ sending $\gamma$ to $\mathcal{C}^\gamma(x,y), \mathcal{D}_f(P^\gamma, x\otimes y)$. 
Note that the nondecreasing property of $\mathcal{C}^\gamma(x,y)$ is discussed in \cite{DPR}*{Property 6}.
%%%%%%%%
\begin{theorem}\label{mono}
Given $x,y \in \mathcal{P}_N$ and $\gamma\in(0,\infty)$,
let $P^\gamma$ be an $(f,\gamma)$-coupling between $x,y$.
Then 
the two functions on $(0,\infty)$ sending $\gamma$ to $\mathcal{C}^\gamma(x,y), -\mathcal{D}_f(P^\gamma, x\otimes y)$
are increasing on $(0,\infty)$.
If  either $\mathcal{C}^{\gamma_0}(x,y)=\mathcal{C}^{\gamma_1}(x,y)$  or $\mathcal{D}_f(P^{\gamma_0}, x\otimes y)=\mathcal{D}_f(P^{\gamma_1}, x\otimes y)$ 
holds for $\gamma_0, \gamma_1\in (0,\infty)$, then $P^{\gamma_0}=P^{\gamma_1}$.

%%%
Moreover,  if $\supp x\otimes y$ is not $C$-cyclically monotone and $f'(0)=-\infty$, then  the both functions are  strictly increasing.
\end{theorem}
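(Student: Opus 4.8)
The plan is to abbreviate $a(\gamma):=\mathcal C^\gamma(x,y)$ and $b(\gamma):=\mathcal D_f(P^\gamma,x\otimes y)$, noting that $a(\gamma)=\langle C,P^\gamma\rangle$ because for $\gamma>0$ the $(f,\gamma)$-coupling is unique. The two monotonicity statements are then pure convex bookkeeping: from the optimality of $P^{\gamma}$ for $\mathcal F^{\gamma}_{x,y}$ and of $P^{\gamma'}$ for $\mathcal F^{\gamma'}_{x,y}$ I would write the two inequalities $a(\gamma)+\gamma b(\gamma)\le a(\gamma')+\gamma b(\gamma')$ and $a(\gamma')+\gamma' b(\gamma')\le a(\gamma)+\gamma' b(\gamma)$; adding them yields $(\gamma-\gamma')(b(\gamma)-b(\gamma'))\le 0$, so $-b$ is nondecreasing, and substituting this back into the first inequality shows $a$ is nondecreasing.

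For the equality clause, suppose $\gamma_0<\gamma_1$ with $a(\gamma_0)=a(\gamma_1)$. Feeding this into the two inequalities above forces $b(\gamma_0)=b(\gamma_1)$ as well, hence $\mathcal F^{\gamma_0}_{x,y}(P^{\gamma_1})=a(\gamma_1)+\gamma_0 b(\gamma_1)=a(\gamma_0)+\gamma_0 b(\gamma_0)=\mathcal F^{\gamma_0}_{x,y}(P^{\gamma_0})$, so $P^{\gamma_1}$ is an $(f,\gamma_0)$-coupling and equals $P^{\gamma_0}$ by uniqueness. The case $b(\gamma_0)=b(\gamma_1)$ is symmetric: the same two inequalities then give $a(\gamma_0)=a(\gamma_1)$, and one reduces to the previous case.

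For the strict monotonicity I would use that, granted the equality clause, it suffices to show $\gamma\mapsto P^\gamma$ is injective on $(0,\infty)$ under the stated hypotheses, since a nondecreasing (resp. nonincreasing) function that repeated a value would yield two parameters with the same $(f,\gamma)$-coupling. So assume $P:=P^{\gamma_0}=P^{\gamma_1}$ with $\gamma_0\ne\gamma_1$. Since $f'(0)=-\infty$, Lemma~\ref{int} applies with both parameters: $\supp P=\supp x\otimes y=:S$, and there are potentials giving $c_{ij}+\gamma_0(f'(p_{ij})-f'(x_iy_j))=\alpha_i+\beta_j$ and $c_{ij}+\gamma_1(f'(p_{ij})-f'(x_iy_j))=\alpha_i'+\beta_j'$ on $S$; subtracting and dividing by $\gamma_1-\gamma_0$ gives $f'(p_{ij})-f'(x_iy_j)=u_i+v_j$ on $S$. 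Because both $P$ and $x\otimes y$ have marginals $x,y$ and are supported exactly on $S=\supp x\times\supp y$, the pairing of $(u_i+v_j)_{(i,j)\in S}$ against $P-x\otimes y$ vanishes; combined with the elementary identity $D_f(r,r_0)+D_f(r_0,r)=(f'(r)-f'(r_0))(r-r_0)$ summed over $S$, this yields $\mathcal D_f(P,x\otimes y)+\mathcal D_f(x\otimes y,P)=0$, hence $P=x\otimes y$. To close the contradiction I would rule this out: when $\supp x\otimes y$ is not $C$-cyclically monotone, the swapping perturbation $\Pi^\varepsilon$ of $x\otimes y$ built exactly as in the proof of Proposition~\ref{cmono} lies in $\Pi(x,y)$ for small $\varepsilon>0$ and satisfies $\langle C,x\otimes y-\Pi^\varepsilon\rangle=\varepsilon\kappa$ for a fixed $\kappa>0$, while $\mathcal D_f(\Pi^\varepsilon,x\otimes y)=o(\varepsilon)$ because each summand is $D_f(r_0\pm\varepsilon,r_0)=o(\varepsilon)$ by differentiability of $f$ at the interior point $r_0=x_iy_j>0$; thus $\mathcal F^\gamma_{x,y}(\Pi^\varepsilon)<\mathcal F^\gamma_{x,y}(x\otimes y)$, so $P^\gamma\ne x\otimes y$ for every $\gamma>0$, contradicting $P=x\otimes y$. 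Hence $\gamma\mapsto P^\gamma$ is injective, and a nondecreasing (resp. nonincreasing) injective function is strictly increasing (resp. strictly decreasing).

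I expect the manipulations of the optimality inequalities and the $o(\varepsilon)$ estimate to be routine. The main obstacle is the strict part: justifying that $P$ lies in the relative interior of $\Pi(x,y)$ so that Lemma~\ref{int} supplies potentials for the two parameters simultaneously, and then extracting from the shared potential the symmetric-divergence identity that collapses $P$ to $x\otimes y$; a minor additional point is that the perturbed entries stay in $[0,1]$, which holds since the entries of $x\otimes y$ indexed by $S$ are strictly positive and, outside the trivial one-atom case, strictly below $1$.
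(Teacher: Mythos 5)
Your proof is correct, but it reaches the conclusion by a genuinely different route in two places. For the weak monotonicity you use the standard cross-optimality inequalities $\mathcal F^{\gamma}_{x,y}(P^{\gamma})\le\mathcal F^{\gamma}_{x,y}(P^{\gamma'})$ and its mirror, add them, and read off that $\mathcal D_f(P^\gamma,x\otimes y)$ is nonincreasing and $\mathcal C^\gamma(x,y)$ nondecreasing; the paper instead first proves the implication $\mathcal D_f(P^{\gamma_0},x\otimes y)=\mathcal D_f(P^{\gamma_1},x\otimes y)\Rightarrow P^{\gamma_0}=P^{\gamma_1}$ by a strict-convexity chain and then invokes the continuity of $\gamma\mapsto\mathcal D_f(P^\gamma,x\otimes y)$ (Theorem~\ref{dconti}) together with $\lim_{\gamma\uparrow\infty}\mathcal D_f(P^\gamma,x\otimes y)=0$ to get monotonicity, so your argument is more elementary and self-contained on this point. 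For the strict part both arguments start from Lemma~\ref{int} applied at two parameters sharing the same coupling $P$, but you eliminate $C$ from the two Lagrange relations to get $f'(p_{ij})-f'(x_iy_j)=u_i+v_j$ on $\supp x\otimes y$, pair this against $P-x\otimes y$ (whose marginals vanish) and use the identity $D_f(r,r_0)+D_f(r_0,r)=(f'(r)-f'(r_0))(r-r_0)$ to force $P=x\otimes y$, then rule that out by a cyclical-swap perturbation of $x\otimes y$ with cost gain of order $\varepsilon$ and Bregman penalty $o(\varepsilon)$ (this last step is exactly the content of Lemma~\ref{full}, implication (1)$\Rightarrow$(3), which you could simply cite instead of reproving); the paper instead eliminates $\psi$ to conclude $c_{ij}+c_{kl}=c_{il}+c_{kj}$ on $\supp x\otimes y$, i.e.\ that $C$ is additively separable there, contradicting the failure of $C$-cyclical monotonicity directly. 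Your version also only needs two parameter values rather than the whole interval $[\gamma_0,\gamma_1]$; what the paper's version buys is that it bypasses Lemma~\ref{full} and the product-coupling characterization entirely. The minor informalities you flag (multiplicities in the swap cycle, entries staying in $[0,1]$) are at the same level of rigor as the paper's own proof of Proposition~\ref{cmono}, so I do not regard them as gaps.
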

\begin{proof}
%%%
If there exist distinct $\gamma_0, \gamma_1\in  (0,\infty)$ such that 
\[
\mathcal{D}_f(P^{\gamma_0}, x\otimes y)=\mathcal{D}_f(P^{\gamma_1}, x\otimes y),
\]
then, for $\gamma:=(1-t)\gamma_0+t \gamma_1$ with $t\in (0,1)$, 
the strict convexity of $\mathcal{D}_f(\cdot, x\otimes y)$ leads to
\begin{align*}
\mathcal{F}^{\gamma}_{x,y}( P^{\gamma})
%%%
&\leq 
\mathcal{F}^{\gamma}_{x,y}( (1-t)P^{\gamma_0}+ tP^{\gamma_1})\\
%%%
&\leq  (1-t)\mathcal{F}^{\gamma}_{x,y}(P^{\gamma_0})+ t\mathcal{F}^{\gamma}_{x,y}(P^{\gamma_1}) 
%%%%
= (1-t)\mathcal{F}^{\gamma_0}_{x,y}(P^{\gamma_0})+ t\mathcal{F}^{\gamma_1}_{x,y}(P^{\gamma_1})\\
%%
%%%
&\leq 
 (1-t)\mathcal{F}^{\gamma_0}_{x,y}(P^{\gamma})+ t\mathcal{F}^{\gamma_1}_{x,y}(P^{\gamma})
%%%
=\mathcal{F}^{\gamma}_{x,y}(P^{\gamma}).
\end{align*}
Hence the inequalities above all become equalities 
and $P^\gamma= P^{\gamma_0}=P^{\gamma_1}$ by the uniqueness  of an $(f,\gamma)$-coupling.
Moreover, since $\mathcal{D}_f(P^\gamma, x\otimes y)$ is nonnegative, continuous in $\gamma>0$
and $\lim_{\gamma \uparrow \infty} \mathcal{D}_f(P^\gamma, x\otimes y)=0$,
we find that  $\mathcal{D}_f(P^{\gamma}, x\otimes y)$ is decreasing in $\gamma >0$.

For  $\gamma_0, \gamma_1 \in(0,\infty)$ with $\gamma_0<\gamma_1$,  the monotonicity of $\mathcal{D}_f(P^{\gamma}, x\otimes y)$ in $\gamma >0$ yields that 
\begin{align*}
\mathcal{C}^{\gamma_0}(x,y)+\gamma_0 \mathcal{D}_f(P^{\gamma_0}, x\otimes y)
=\mathcal{F}^{\gamma_0}_{x,y}(P^{\gamma_0}) 
\leq \mathcal{F}^{\gamma_0}_{x,y}(P^{\gamma_1})
\leq  \mathcal{C}^{\gamma_1}(x,y)+\gamma_1 \mathcal{D}_f(P^{\gamma_0}, x\otimes y).
\end{align*}
Thus the function on $(0,\infty)$ sending $\gamma$ to  $\mathcal{C}^\gamma(x,y)$  is  increasing.
If $\mathcal{C}^{\gamma_0}(x,y)=\mathcal{C}^{\gamma_1}(x,y)$,
then the inequalities above all become equalities, in particular $\mathcal{F}^{\gamma_0}_{x,y}(P^{\gamma_0})=\mathcal{F}^{\gamma_0}_{x,y}(P^{\gamma_1})$.
By the uniqueness of $(f,\gamma)$-couplings, $P^{\gamma_0}=P^{\gamma_1}$.
Suppose that $\supp x\otimes y$ is not $C$-cyclically monotone and $f'(0)=-\infty$.
In addition, we assume that 
 there exist $\gamma_0, \gamma_1\in  (0,\infty)$ with $\gamma_0<\gamma_1$ such that 
either $\mathcal{C}^{\gamma_0}(x,y)=\mathcal{C}^{\gamma_1}(x,y)$ 
or
$\mathcal{D}_f(P^{\gamma_0}, x\otimes y)=\mathcal{D}_f(P^{\gamma_1}, x\otimes y)$ holds.
Then $P^\gamma=P^{\gamma_0}$ for $\gamma\in[\gamma_0, \gamma_1]$.
For $\gamma\in [\gamma_0, \gamma_1]$,  it follows from Lemma~\ref{int} that  
$\psi_{ij}:=f'(p^\gamma_{ij})-f'(x_iy_j)$ is well-defined  for $(i,j)\in \supp x\otimes y$ and is independent of $\gamma$.
Furthermore,  there exists $(\alpha^\gamma, \beta^\gamma) \in \mathbb{R}^{\supp x\otimes y}$ such that 
\begin{equation}\label{gamma}
c_{ij}+\gamma \psi_{ij}=\alpha_i^\gamma+\beta_j^\gamma \quad \text{for\ }(i,j)\in \supp x\otimes y.
\end{equation}
Since $\supp x\otimes y$ is not $C$-cyclically monotone,  we find that  $\min\{|\supp x|, |\supp y|\}>1$.
Set $i_0:=\max_{i\in \supp x} i$ and $j_0:=\max_{j \in \supp y} j$.
Then it turns out that 
\begin{align*}
\alpha_i^\gamma+\beta_j^\gamma
=c_{ij_0}+c_{i_0 j}-c_{i_0j_0}
+\gamma \left(\psi_{ij_0} + \psi_{i_0j}-\psi_{i_0j_0} \right) 
\quad \text{for\ }(i,j)\in \supp x\otimes y.
\end{align*}
This with~\eqref{gamma} yields that
\[
c_{ij}+c_{i_0j_0}-\left(c_{ij_0}+c_{i_0 j}\right)
=-\gamma\left\{ \psi_{ij}+\psi_{i_0j_0}-\left(\psi_{ij_0}+\psi_{i_0 j}\right) \right\}
\qquad \text{for\ }(i,j)\in \supp x\otimes y.
\]
Since $\gamma\in [\gamma_0, \gamma_1]$ is arbitrary, 
$c_{ij}+c_{i_0j_0}=c_{ij_0}+c_{i_0 j}$ holds for  $(i,j)\in \supp x\otimes y$.
Moreover, it is possible to replace $(i_0, j_0)$ with any element $(i,j)\in \supp x\otimes y$,
consequently 
\[
c_{ij}+c_{kl}=c_{il}+c_{k j} \quad \text{for\ } (i,j), (k,l)\in \supp x\otimes y.
\]
This leads to the $C$-cyclical monotonicity of   $\supp x\otimes y$, which is a contradiction.
Thus the two functions  on $(0,\infty)$ sending $\gamma$ to $\mathcal{C}^\gamma(x,y), \mathcal{D}_f(P^\gamma, x\otimes y)$
are strictly  increasing.
\end{proof}

%%%%%%%%%%%
We prepare a lemma to provide an equality condition in  $0 \leq \mathcal{D}_f(P^\gamma,  x\otimes y )\leq \Lambda(x,y)$.
%%%%%%%%%%%%%%%%%%%%%%%%%%%%%%%%%
\begin{lemma}\label{full}
For $x,y \in \mathcal{P}_N$, the  following three conditions are equivalent to each other.
%%%
\begin{enumerate} 
\item
$x\otimes y$ is an $(f,\gamma)$-coupling  for some $\gamma>0$.
\item
$x\otimes y$ is an $(f,\gamma)$-coupling  for any $\gamma>0$.
\item
$\supp x \otimes y$ is $C$-cyclically monotone.
\end{enumerate}
\end{lemma}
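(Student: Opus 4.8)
The plan is to establish the cycle of implications $(2)\Rightarrow(1)\Rightarrow(3)\Rightarrow(2)$. The implication $(2)\Rightarrow(1)$ is immediate. For $(3)\Rightarrow(2)$ I will argue as follows: if $\supp x\otimes y$ is $C$-cyclically monotone, then Proposition~\ref{cmono} shows that $x\otimes y$ is an optimal coupling, so $\langle C,x\otimes y\rangle\leq\langle C,\Pi\rangle$ for every $\Pi\in\Pi(x,y)$; combining this with the nonnegativity of $\mathcal{D}_f$ gives
\[
\mathcal{F}^\gamma_{x,y}(x\otimes y)=\langle C,x\otimes y\rangle\leq\langle C,\Pi\rangle+\gamma\,\mathcal{D}_f(\Pi,x\otimes y)=\mathcal{F}^\gamma_{x,y}(\Pi)
\]
for every $\gamma>0$ and every $\Pi\in\Pi(x,y)$, hence $x\otimes y$ is an $(f,\gamma)$-coupling for every $\gamma>0$.

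The main work is in $(1)\Rightarrow(3)$, which I will prove via the first-order optimality condition on the convex set $\Pi(x,y)$. Assume $x\otimes y$ is an $(f,\gamma_0)$-coupling for some $\gamma_0>0$, fix an arbitrary $\Pi\in\Pi(x,y)$, and set $\Pi^t:=(1-t)(x\otimes y)+t\Pi\in\Pi(x,y)$ for $t\in[0,1]$. By Lemma~\ref{supp}, $\supp\Pi\subset\supp x\otimes y$, so $\pi_{ij}=0$ whenever $x_iy_j=0$ and therefore
\[
\mathcal{D}_f(\Pi^t,x\otimes y)=\sum_{(i,j)\in\supp x\otimes y}D_f\big((1-t)x_iy_j+t\pi_{ij},\,x_iy_j\big).
\]
Each summand is differentiable at $t=0$ (its argument stays near $x_iy_j>0$, where $f\in C^1$), with derivative $f'(x_iy_j)(\pi_{ij}-x_iy_j)-f'(x_iy_j)(\pi_{ij}-x_iy_j)=0$; since the sum is finite, $\tfrac{d}{dt}\big|_{t=0^+}\mathcal{D}_f(\Pi^t,x\otimes y)=0$. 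As $t\mapsto\langle C,\Pi^t\rangle$ is affine with derivative $\langle C,\Pi-x\otimes y\rangle$, minimality of $x\otimes y$ for $\mathcal{F}^{\gamma_0}_{x,y}$ forces $0\leq\tfrac{d}{dt}\big|_{t=0^+}\mathcal{F}^{\gamma_0}_{x,y}(\Pi^t)=\langle C,\Pi-x\otimes y\rangle$. Since $\Pi\in\Pi(x,y)$ was arbitrary, $x\otimes y$ is optimal, and Proposition~\ref{cmono} then gives that $\supp x\otimes y$ is $C$-cyclically monotone.

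The one step to handle with care is the vanishing of the right derivative of $t\mapsto\mathcal{D}_f(\Pi^t,x\otimes y)$ at $t=0$: this is precisely where the reduction of the sum to $\supp x\otimes y$ (Lemma~\ref{supp}) matters, since it ensures that no term involving $f'(0)$ --- possibly $-\infty$ --- ever appears, so the lemma needs no hypothesis on $f'(0)$. A fully equivalent alternative for $(1)\Rightarrow(3)$ is to test the minimality of $x\otimes y$ against the perturbations $\Pi^\varepsilon$ built from a finite family $\{(i_m,j_m)\}_{m=1}^M\subset\supp x\otimes y$ and $\sigma\in\mathfrak{S}_M$ exactly as in the proof of Proposition~\ref{cmono}, using $\mathcal{D}_f(\Pi^\varepsilon,x\otimes y)=o(\varepsilon)$ as $\varepsilon\downarrow0$ to deduce $\sum_m c_{i_mj_m}\leq\sum_m c_{i_{\sigma(m)}j_m}$ directly.
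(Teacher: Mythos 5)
Your proof is correct, and for the key implication it takes a genuinely different route from the paper. The steps $(2)\Rightarrow(1)$ and $(3)\Rightarrow(2)$ coincide with the paper's (optimality of $x\otimes y$ via Proposition~\ref{cmono} plus nonnegativity of $\mathcal{D}_f$). For $(1)\Rightarrow(3)$, however, the paper invokes Lemma~\ref{int}: the Lagrange multiplier identity \eqref{lag} applied to $P^\gamma=x\otimes y$ gives $c_{ij}=\alpha_i+\beta_j$ on $\supp x\otimes y$, from which $C$-cyclical monotonicity is read off directly (note that the statement of Lemma~\ref{int} carries the hypothesis $f'(0)=-\infty$, which Lemma~\ref{full} does not assume; only the multiplier part of its proof, valid because $\supp(x\otimes y)=\supp x\otimes y$ automatically, is actually needed). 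You instead show that $x\otimes y$ must be an optimal coupling by differentiating $t\mapsto\mathcal{F}^{\gamma_0}_{x,y}\big((1-t)x\otimes y+t\Pi\big)$ at $t=0^+$: the Bregman term has vanishing one-sided derivative at its base point because, after the reduction of the sum to $\supp x\otimes y$ via Lemma~\ref{supp}, every argument stays in a neighborhood of $x_iy_j>0$ where $f\in C^1$, so minimality forces $\langle C,\Pi-x\otimes y\rangle\geq 0$ for all $\Pi\in\Pi(x,y)$, and Proposition~\ref{cmono} concludes. Your variational argument is more elementary and self-contained, and it makes transparent that no condition on $f'(0)$ is required; the paper's multiplier route, in exchange, produces the explicit dual identity $c_{ij}=\alpha_i+\beta_j$ on $\supp x\otimes y$, which ties in with the multiplier structure exploited elsewhere (e.g.\ in Theorem~\ref{mono}). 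Your closing alternative via the cyclic perturbations of Proposition~\ref{cmono} with $\mathcal{D}_f(\Pi^\varepsilon,x\otimes y)=o(\varepsilon)$ is likewise sound, resting on the same differentiability-at-the-base-point observation.
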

%%%%
\begin{proof}
The implication from (2) to (1) is trivial.
Assume (1).
By Lemma~\ref{int},  there exists $(\alpha, \beta)\in \mathbb{R}^{\supp x\otimes y}$ satisfying~\eqref{lag} 
for $P^\gamma=x\otimes y$, that is, $c_{ij}=\alpha_i+\beta_j$ for $(i,j)\in \supp x\otimes y$.
This leads to the $C$-cyclical monotonicity of $\supp x\otimes y$, hence (3) holds.

Assume (3).  By Proposition~\ref{cmono},  $x\otimes y$ is optimal.
For $\Pi \in \Pi(x,y)$ and $\gamma>0$,
we have
\begin{align*}
\mathcal{F}^\gamma_{x,y}(x\otimes y)
=\langle C, x\otimes y \rangle+ \gamma \mathcal{D}_f(x\otimes y,x\otimes y)
=\mathcal{C}(x,y) 
\leq \langle C, \Pi \rangle \leq  \mathcal{F}^\gamma_{x,y}(\Pi).
\end{align*}
Thus $x\otimes y$ is an $(f,\gamma)$-coupling for any $\gamma>0$, that is, (2) holds.
\end{proof}
%%%%%%%%%%%%%%%%%%%%%%%

%%%
\begin{proposition}\label{second}
For $x,y \in \mathcal{P}_N$ and $\gamma>0$,
the  following two conditions are equivalent to each other.
\begin{enumerate} 
\item
$\supp x\otimes y$ is $C$-cyclically monotone.
\item
 $\mathcal{D}_f(P^\gamma,  x\otimes y )=0$.
\end{enumerate}
Moreover, these conditions lead to the following condition.
\begin{enumerate} 
\item[(3)]
 $\mathcal{D}_f(P^\gamma,  x\otimes y )=\Lambda(x,y)$.
\end{enumerate}
If  $f'(0)=-\infty$,  then the condition $(3)$ is equivalent to the other conditions $(1)$ and~$(2)$.
\end{proposition}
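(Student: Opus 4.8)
The plan is to establish $(1)\Leftrightarrow(2)$ directly from Lemma~\ref{full}, the uniqueness of $(f,\gamma)$-couplings, and the fact that $\mathcal{D}_f(\Pi,x\otimes y)=0$ holds precisely when $\Pi=x\otimes y$; then to read $(3)$ off from $(1)$; and finally, under the assumption $f'(0)=-\infty$, to recover $(1)$ from $(3)$ by means of the error identity established earlier.

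First I would prove $(2)\Rightarrow(1)$: condition $(2)$ says $\mathcal{D}_f(P^\gamma,x\otimes y)=0$, hence $P^\gamma=x\otimes y$, so $x\otimes y$ is an $(f,\gamma)$-coupling; by Lemma~\ref{full} this is equivalent to the $C$-cyclical monotonicity of $\supp x\otimes y$, which is $(1)$. For $(1)\Rightarrow(2)$: by Lemma~\ref{full}, if $\supp x\otimes y$ is $C$-cyclically monotone then $x\otimes y$ is an $(f,\gamma)$-coupling for every $\gamma>0$, and since the $(f,\gamma)$-coupling is unique for $\gamma>0$ we get $P^\gamma=x\otimes y$, whence $\mathcal{D}_f(P^\gamma,x\otimes y)=\mathcal{D}_f(x\otimes y,x\otimes y)=0$, which is $(2)$.

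Next, assuming $(1)$ (equivalently $(2)$), Proposition~\ref{cmono} gives that $x\otimes y$ is an optimal coupling; since $\mathcal{D}_f(x\otimes y,x\otimes y)=0$ and $\mathcal{D}_f\ge 0$, the definition of $\Lambda$ forces $\Lambda(x,y)=0$, so together with $(2)$ we obtain $\mathcal{D}_f(P^\gamma,x\otimes y)=0=\Lambda(x,y)$, i.e.\ $(3)$. This already yields the ``moreover'' part, without any hypothesis on $f'(0)$.

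For the converse $(3)\Rightarrow(1)$ under $f'(0)=-\infty$, I would start from the identity proved earlier,
\[
\frac{1}{\gamma}\bigl(\mathcal{C}^\gamma(x,y)-\mathcal{C}(x,y)\bigr)
=-\mathcal{D}_f(\Pi_\ast, P^\gamma)+\Lambda(x,y)-\mathcal{D}_f(P^\gamma, x\otimes y),
\]
where $\Pi_\ast\in\Pi(x,y)$ is the optimal coupling with $\mathcal{D}_f(\Pi_\ast,x\otimes y)=\Lambda(x,y)$. Under $(3)$ the last two terms cancel, so the left-hand side equals $-\mathcal{D}_f(\Pi_\ast,P^\gamma)$; but the left-hand side is nonnegative, since $\mathcal{C}^\gamma(x,y)=\langle C,P^\gamma\rangle\ge\mathcal{C}(x,y)$, while $-\mathcal{D}_f(\Pi_\ast,P^\gamma)\le 0$, so both sides vanish and $\mathcal{D}_f(\Pi_\ast,P^\gamma)=0$. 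The main obstacle, and the only step that needs care, is to deduce $\Pi_\ast=P^\gamma$ from this vanishing: here one invokes Lemma~\ref{int} (legitimate as $f'(0)=-\infty$) to get $\supp P^\gamma=\supp x\otimes y$, so that on $\supp x\otimes y$ the base points $p^\gamma_{ij}$ are strictly positive and the strict convexity of $D_f(\cdot,p^\gamma_{ij})$ forces $\pi_{\ast ij}=p^\gamma_{ij}$, while off $\supp x\otimes y$ all relevant entries are $0$ by Lemma~\ref{supp}; hence $\Pi_\ast=P^\gamma$. Consequently $P^\gamma$ is an optimal coupling with $\supp P^\gamma=\supp x\otimes y$, and Proposition~\ref{cmono} yields that $\supp x\otimes y$ is $C$-cyclically monotone, which is $(1)$.
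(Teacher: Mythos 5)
Your proof is correct. For the equivalence $(1)\Leftrightarrow(2)$ and the implication to $(3)$ you argue exactly as the paper does, via Lemma~\ref{full}, the uniqueness of the $(f,\gamma)$-coupling, Proposition~\ref{cmono}, and the fact that $\mathcal{D}_f(\Pi,x\otimes y)=0$ iff $\Pi=x\otimes y$. Where you diverge is the implication $(3)\Rightarrow(1)$ under $f'(0)=-\infty$: you argue directly, invoking the identity
\[
\frac{1}{\gamma}\bigl(\mathcal{C}^\gamma(x,y)-\mathcal{C}(x,y)\bigr)
=-\mathcal{D}_f(\Pi_\ast,P^\gamma)+\Lambda(x,y)-\mathcal{D}_f(P^\gamma,x\otimes y)
\]
from the proposition containing \eqref{errors} (itself a consequence of Lemma~\ref{int}), and under $(3)$ a sign comparison forces $\mathcal{D}_f(\Pi_\ast,P^\gamma)=0$, hence $\Pi_\ast=P^\gamma$; then $\supp P^\gamma=\supp x\otimes y$ from Lemma~\ref{int} and Proposition~\ref{cmono} give $(1)$. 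The paper instead proves the contrapositive: if $\supp x\otimes y$ is not $C$-cyclically monotone, then Lemma~\ref{int} and Proposition~\ref{cmono} show $P^\gamma$ is not optimal, so $\mathcal{C}^\gamma(x,y)>\mathcal{C}(x,y)$, and the strict inequality $\mathcal{F}^\gamma_{x,y}(P^\gamma)<\mathcal{F}^\gamma_{x,y}(\Pi_\ast)$ yields $\mathcal{D}_f(P^\gamma,x\otimes y)<\Lambda(x,y)$, contradicting $(3)$. The paper's route is slightly more self-contained (it needs only the definition of $\mathcal{F}^\gamma_{x,y}$ and its unique minimizer), while yours leans on the previously established error identity but gives more: it shows that under $(3)$ one actually has $P^\gamma=\Pi_\ast$ and $\mathcal{C}^\gamma(x,y)=\mathcal{C}(x,y)$. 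Your care in checking that $\mathcal{D}_f(\Pi_\ast,P^\gamma)$ is finite and that its vanishing forces entrywise equality (using $\supp P^\gamma=\supp x\otimes y$ and Lemma~\ref{supp}) is exactly the point that needed attention, and it is handled correctly.
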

\begin{proof}
If $\supp x\otimes y$ is $C$-cyclically monotone, then
\[
 \mathcal{D}_f(P^\gamma,  x\otimes y )=\Lambda(x,y)=0
 \quad \text{for\ }\gamma>0
\]
by Proposition~\ref{cmono} and Lemma~\ref{full}.
Thus the implication from (1) to (2) and (3) holds.

Conversely, for $\gamma>0$, if $\mathcal{D}_f(P^\gamma,  x\otimes y )=0$, 
then $P^\gamma=x\otimes y$ hence $\supp x\otimes y$ is $C$-cyclically monotone by  Lemma~\ref{full}.
Thus the implication from (2) to (1) holds.

Assume  $f'(0)=-\infty$ and fix $\gamma>0$.
If  $\supp x\otimes y$ is not $C$-cyclically monotone,  then $P^\gamma$ is not optimal by Proposition~\ref{cmono} and Lemma~\ref{int}
hence  $\mathcal{C}^\gamma(x,y) > \mathcal{C}(x,y)$.
Let $\Pi_\ast \in \Pi(x,y)$ an optimal coupling such that 
$\mathcal{D}_f(\Pi_\ast , x\otimes y)=\Lambda(x,y)$.
It turns out that
\begin{align*}
\mathcal{C}^\gamma(x,y) +\gamma \mathcal{D}_f(P^\gamma, x\otimes y)
&=\mathcal{F}^\gamma_{x,y}(P^\gamma)
<\mathcal{F}^\gamma_{x,y}(\Pi_\ast)
=\mathcal{C}(x,y)+\gamma \Lambda(x,y)
\end{align*}
by definition.
Thus  $\mathcal{D}_f(P^\gamma,  x\otimes y )<\Lambda(x,y)$, that is, (3) implies (1).
This concludes the proof of the proposition.
\end{proof}
%%%%
\begin{remark}
For the strict monotonicity of $\mathcal{C}^\gamma(x,y) ,\mathcal{D}_f(P^\gamma,x\otimes y)$ with respect to $\gamma>0$
in Theorem \ref{mono}
and the implication from (3) to (1) in Proposition~\ref{second}, the assumption  $f'(0)=-\infty$ is necessary.
Indeed, in the setting as in Remark~\ref{38}, for $\gamma \in (0, 2/M_a)$ with $a\neq 0$,
$P^\gamma=P^{a,a}\in \Pi(x,y^a)$ is an unique optimal coupling, hence $\mathcal{D}_f(P^\gamma, x\otimes y)=\Lambda(x,y)$  holds.
However $\supp x\otimes y^a$ is not $C$-cyclically monotone.
\end{remark}
%%%%%%%%%%%%%%%%%%%%%%%%%%%%%%%%
\begin{remark}
%%%
It is easy to  see that $\mathcal{C}$ is convex on $(\mathcal{P}_N \times \mathcal{P}_N, \|\cdot\|_2)$.
However, $\mathcal{C}^\gamma$ is not convex on $(\mathcal{P}_N \times \mathcal{P}_N, \|\cdot\|_2)$ in general.
For example,  we choose  $y^b \in \mathcal{P}_2$ and $Q^{b,s}\in \mathcal{P}_{2\times 2}$~as 
\begin{align*}
&y_1^b=b, \quad
y_2^b=1-b,\quad
\text{where}\ b\in [1/2,1],\\
%%%%
&q_{11}^{b,s}=s,\quad q_{12}^{b,s}=q_{21}^{b,s}=b-s, \quad q_{22}^{b,s}=1-2b+s,
\quad
\text{where}\ s\in [2b-1,b],
\end{align*}
respectively.
Then  $\Pi(y^b, y^b)=\{Q^{b,s}\}_{s\in [2b-1,b]}$, especially  $\Pi(y^1,y^1)=\{Q^{1,1}\}$.
We take $C=(1-\delta_{ij})_{1\leq i,j \leq 2}\in M_2(\mathbb{R})$ and $f(r)=r\log r$.
For $\gamma>0$ and  $s\in  (2b-1,b)$ with $b\neq 1$,  we find that
\begin{align*}
\frac{\partial}{\partial s}
\mathcal{F}^{\gamma}_{x,y}(Q^{b,s})
=-2+\gamma  \log\frac{s(1-2b+s)}{(b-s)^2}.
%%%
\end{align*}
This yields that an unique $(f,\gamma)$-coupling between $y^b, y^b$ is $Q^{b, s(b)}$,
where 
\[
s(b):=\frac{1}{ 2(e^{2/\gamma}-1) }
\left(2e^{2/\gamma}b-2b+1-\sqrt{ 4(e^{2/\gamma}-1)b(1-b)+1 } \right).
\]
This means that $\mathcal{C}^\gamma(y^b,y^b)=2(b-s(b))$ for $b\in [1/2,1)$.
It is natural to set $s(1):=1$.

If $\mathcal{C}^\gamma$ is convex, then, for $b_t:=(1-t)b_0+ tb_1$ with $b_0, b_1 \in[1/2,1]$ and $t\in(0,1)$, 
\begin{align*}
2\left(b_t -s(b_t )\right)
=\mathcal{C}^\gamma(y^{b_t} ,y^{b_t})
&=
\mathcal{C}^\gamma((1-t)(y^{b_0},y^{b_0})+t(y^{b_1},y^{b_1}))\\
&\leq
(1-t)\mathcal{C}^\gamma(y^{b_0},y^{b_0})
+
t\mathcal{C}^\gamma(y^{b_1},y^{b_1})\\
&=2(1-t)\left( b_0 -s(b_0) \right)
+2t\left( b_1 -s(b_1) \right)
\end{align*}
holds, which is equivalent to the concavity of $s(b)$ on $b\in [1/2,1]$.
However, we find that 
$\lim_{b\uparrow 1}s''(b)=2e^{2/\gamma}>0$.
Thus $\mathcal{C}^\gamma$ is not convex.
\end{remark} 
%%%%%%%%%%%

%%%%%%%%%%%%%%%%%%%%%%%%%%%%%%%%
\section{Gradient descent on a space of couplings}
We construct an iterative process to find the relaxed minimizer via the Bregman divergences associated to 
a strictly convex, continuous function $f$ on $[0,1]$  such that $f\in C^2((0,1])$ with $f'(0)=-\infty$,
where the output $\Pi\in \mathcal{P}_{N\times N}$ is always a coupling between two given points in $\mathcal{P}_N$ even if we stop the iteration at the finite step.
Note that $f(r)=r \log r $ satisfies this condition.

For this purpose,  we regard $\Pi(x,y)$ as a submanifold of $M_{N}(\mathbb{R})$ and consider the gradient of 
$\mathcal{F}^\gamma_{x,y}$ with respect to the induced Riemannian metric.
Note that an algorithm for  general convex functions is already introduced (for instance, see~
\cite{DPR}*{Section~4} and \cite{PC}*{Remark~4.10, Section~4.6} and the references therein)
and  a gradient descent of a convex function on $\mathcal{P}_N$ is mentioned  in \cite{PC}*{Section~9.3}.
However these are different from our method.
%%%

%
In what follows,  we fix  $x,y\in \mathcal{P}_N$.
For simplicity, we assume $|\supp x|=|\supp y|=N$.
Let $\Phi:O\to M_N(\mathbb{R})$ be as in \eqref{coord}.
Then $M:=\Phi(O)\subset \Pi(x,y)$ becomes a submanifold of $M_N(\mathbb{R})$ of dimension $(N-1)^2$, 
where $(M, \Phi^{-1}|_M)$ determines a global coordinate system of $M$.
Let $(\partial_{ij})_{1\leq i,j \leq N}$ be coordinate vector fields of this  global coordinate system.
We denote by $g$  the induced Riemannian metric on $M$.
We see that 
\[
g_{(i,j)(k,l)}:=g\left({\partial_{ij}},{\partial_{kl}}\right)=(1+\delta_{ik})(1+\delta_{jl})
\quad \text{for}\ 1\leq i,j,k,l\leq N-1,
\]
and its inverse matrix, denoted by $(g^{(i,j)(k,l)})_{1\leq i,j,k,l\leq N-1}$,  is 
\[
g^{(i,j)(k,l)}:=\left(\frac1N-\delta_{ik}\right)\left(\frac1N-\delta_{jl}\right)
\quad \text{for}\ 1\leq i,j,k,l\leq N-1.
\]
This ensures that  $M$ is a totally geodesic submanifold of $M_N(\mathbb{R})$,
that is, $(\Pi^t)_{t\in [0,1]}$ is a geodesic in $M$ if and only if  there exist $Z_0, Z_1\in O$ such that 
\[
\Pi^t=\Phi \left( (1-t) Z_0+ t Z_1 \right)\quad \text{for}\ t\in [0,1]. 
\]
Fix $\gamma>0$  and  a strictly convex, continuous function $f$ on $[0,1]$  such that $f\in C^2((0,1])$.
Then  $\mathcal{F}^{\gamma}_{x,y}\in C^2(M)$
and the composition of $\mathcal{F}^{\gamma}_{x,y}$ and a nonconstant geodesic on  $(M,g)$ is  always strictly convex.
%%%
Hence the function $\mathcal{F}:O\to \mathbb{R}$ defined by 
\[
\mathcal{F}(Z):=\mathcal{F}^\gamma_{x,y}(\Phi(Z))
\]
is $C^2$, strictly convex on $O$.
Moreover, $\mathcal{F}$ is continuously extended to the closure $\overline{O}$ of $O$.
For  $1\leq i,j \leq N-1$,  we write 
\begin{align*}
\nabla_{ij}\mathcal{F}:&=\left(\frac{\partial \mathcal{F}}{\partial z_{ij}}\right)
=\sum_{(k,l)=(i,j),(N,N),(i,N),(N,j)}\mathrm{sgn}(k,l)\left\{ c_{kl}+\gamma\big( f'( \phi_{kl})-f'(x_ky_l) \big)\right\}
\end{align*}
and $\nabla \mathcal{F}:=(\nabla_{ij}\mathcal{F})_{1\leq i,j \leq N-1}$, 
where $\mathrm{sgn}(k,l):=1$ if $(k,l)=(i,j),(N,N)$, otherwise $\mathrm{sgn}(k,l):=-1$.
Then $\nabla \mathcal{F}(Z)=0$ if and only if $\Phi(Z)$ is an $(f,\gamma)$-coupling between~$x,y$.
To define a sequence $(Z^n)_{n\in \mathbb{N}}$ in $O$,  we prepare a lemma.
%%%%%%%%%%%%%
\begin{lemma}\label{epsilon}
Let $f$ be a strictly convex, continuous function on $[0,1]$ so that $f\in C^2((0,1])$ with $f'(0)=-\infty$.
Given $\gamma>0$ and  $x,y\in \mathcal{P}_N$ with  $|\supp x|=|\supp y|=N$, 
define  functions $\varepsilon_{ij},  \varepsilon^i, \varepsilon_j, \varepsilon: O\to \mathbb{R}$  for $1\leq i,j \leq N-1$ by
%%%%
\begin{align*}
\varepsilon_{ij}(Z)
:&=\begin{cases}
\dfrac{z_{ij}}{2\nabla_{ij}\mathcal{F}(Z) }& \text{if}\ \nabla_{ij}\mathcal{F}(Z)>0,\\
1&\text{otherwise},
\end{cases}\\[-2pt]
%%%%
\varepsilon^{i}(Z)
:&=\begin{cases}
 \dfrac{x_i-\sum_{k=1}^{N-1}z_{ik}}{-2 \sum_{k=1}^{N-1} \nabla_{ik}\mathcal{F}(Z)}
 %%%
 &\displaystyle  \text{if}\ \sum_{k=1}^{N-1}
 \nabla_{ik}\mathcal{F}(Z)<0,\\
1&\text{otherwise},
\end{cases}\\[-2pt]
%%%
\varepsilon_j(Z)
:&=\begin{cases}
\displaystyle \dfrac{y_j-\sum_{k=1}^{N-1}z_{kj}}{- 2\sum_{k=1}^{N-1} \nabla_{kj}\mathcal{F}(Z)}
&\displaystyle  \text{if}\ \sum_{k=1}^{N-1} \nabla_{kj}\mathcal{F}(Z)<0,\\
1&\text{otherwise},
\end{cases}\\[-2pt]
%%%
%%
%
\varepsilon(Z):&=\min_{1\leq i,j\leq N}\{\varepsilon^i(Z), \varepsilon_j(Z),\varepsilon_{ij}(Z) \},
\end{align*}
%%%%
respectively.
Then  $Z-t \nabla\mathcal{F}(Z) \in O$ for $t\in[0,\varepsilon(Z)]$
and
\[
H(Z)
:=\max_{t\in [0, \varepsilon(Z)]} \frac{d^2}{dt^2}\mathcal{F}(Z-t\nabla \mathcal{F}(Z)) \geq0
\]
with equality if and only if $\nabla \mathcal{F}(Z)=0$.
\end{lemma}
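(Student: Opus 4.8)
The plan is to establish the two assertions in turn: first that $Z-t\nabla\mathcal{F}(Z)\in O$ for $t\in[0,\varepsilon(Z)]$, and then the statement on $H(Z)$. For the containment I would write $W(t):=Z-t\nabla\mathcal{F}(Z)$, which is affine in $t$, and observe that $W(t)\in O$ is the conjunction of the strict inequalities $w_{ij}(t)>0$, $\sum_{k=1}^{N-1}w_{ik}(t)<x_i$ and $\sum_{k=1}^{N-1}w_{kj}(t)<y_j$ for $1\leq i,j\leq N-1$ (the remaining requirement $w_{ij}(t)<1$ follows, once all entries are nonnegative, from $w_{ij}(t)\leq\sum_{k}w_{ik}(t)<x_i\leq1$). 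Each such quantity is affine in $t$, with slope $-\nabla_{ij}\mathcal{F}(Z)$, $-\sum_{k}\nabla_{ik}\mathcal{F}(Z)$ or $-\sum_{k}\nabla_{kj}\mathcal{F}(Z)$ respectively. When the slope points away from the corresponding facet — e.g. $\nabla_{ij}\mathcal{F}(Z)\leq0$ for the first family — the inequality persists for every $t\geq0$, and this is precisely the case in which the associated $\varepsilon_{ij}$, $\varepsilon^i$ or $\varepsilon_j$ was set equal to $1$; otherwise the associated $\varepsilon$-value is exactly one half of the time needed to reach that facet, so that e.g. $w_{ij}(t)\geq z_{ij}/2>0$ on $[0,\varepsilon_{ij}(Z)]$, and $\sum_{k}w_{ik}(t)\leq(x_i+\sum_{k}z_{ik})/2<x_i$ on $[0,\varepsilon^i(Z)]$, and similarly for the column sums on $[0,\varepsilon_j(Z)]$. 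Since $Z\in O$ makes the numerators $z_{ij}$, $x_i-\sum_{k}z_{ik}$, $y_j-\sum_{k}z_{kj}$ positive and the case hypotheses make the denominators positive, each of $\varepsilon_{ij},\varepsilon^i,\varepsilon_j$ is strictly positive; hence $\varepsilon(Z)>0$, and since $\varepsilon(Z)$ is a lower bound for all of them, all the inequalities hold simultaneously for every $t\in[0,\varepsilon(Z)]$ (endpoint included, thanks to the factor $1/2$), i.e. $W(t)\in O$.

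For the second assertion I would set $g(t):=\mathcal{F}(W(t))$. By the first part $W$ maps $[0,\varepsilon(Z)]$ into $O$, where $\mathcal{F}$ is $C^2$, so $g\in C^2([0,\varepsilon(Z)])$ and a direct computation gives $g''(t)=\langle\mathrm{Hess}\,\mathcal{F}(W(t))\,\nabla\mathcal{F}(Z),\nabla\mathcal{F}(Z)\rangle\geq0$, the sign by convexity of $\mathcal{F}$; thus $H(Z)=\max_{[0,\varepsilon(Z)]}g''\geq0$. If $\nabla\mathcal{F}(Z)=0$ then $W\equiv Z$, so $g$ is constant, $g''\equiv0$, and $H(Z)=0$. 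If $\nabla\mathcal{F}(Z)\neq0$ then $\varepsilon(Z)>0$ and $W([0,\varepsilon(Z)])$ is a nondegenerate segment contained in $O$, so $g$ is the restriction of the strictly convex $C^2$ function $\mathcal{F}$ to a nondegenerate segment and is therefore itself strictly convex; a $C^2$ strictly convex function of one real variable is affine on no subinterval, so its second derivative — continuous and nonnegative — is not identically zero on $[0,\varepsilon(Z)]$ and is hence strictly positive somewhere, giving $H(Z)>0$. Together these yield $H(Z)\geq0$ with equality precisely when $\nabla\mathcal{F}(Z)=0$.

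The conceptual part (the second assertion) is short; the bulk of the work — and the place to be careful — is the first part, where each of $\varepsilon_{ij},\varepsilon^i,\varepsilon_j$ must be matched to exactly one family of facets of $O$ and the ``stay halfway'' estimate verified with the sign conventions in the definition of $\nabla_{ij}\mathcal{F}$ handled consistently. The one genuine subtlety in the second part is that one should \emph{not} invoke positive-definiteness of the Hessian of $\mathcal{F}$ (strict convexity of $f$ together with $f\in C^2$ gives only $f''\geq0$), but instead use that a strictly convex $C^2$ function admits no affine piece.
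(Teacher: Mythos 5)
Your proof is correct and follows essentially the same route as the paper: the same ``half the time to the facet'' estimates (entries bounded below by $z_{ij}/2$, row and column sums bounded by $\tfrac12(x_i+\sum_k z_{ik})$ and $\tfrac12(y_j+\sum_k z_{kj})$) give $Z-t\nabla\mathcal{F}(Z)\in O$ on $[0,\varepsilon(Z)]$, and the statement about $H(Z)$ is deduced from the strict convexity of $\mathcal{F}$ on $O$. Your second part merely spells out what the paper asserts in one line, and your observation that one should argue via ``a strictly convex $C^2$ function is affine on no segment'' rather than via positive-definiteness of the Hessian (which $f''\geq 0$ alone does not give) is a correct and careful way to justify the equality case.
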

%%%%%%%%%%%
\begin{proof}
Set  $Z^t:=Z-t \nabla \mathcal{F}(Z)$.
For $1\leq i, j\leq N-1$ and  $t\in[0,\varepsilon(Z)]$, 
we see that 
\begin{align*}
%%%%
z^t_{ij}
&\geq \begin{cases}
\dfrac12z_{ij}& \text{if}\ \nabla_{ij}\mathcal{F}(Z)>0,\\
z_{ij} &\text{otherwise},
\end{cases}\\[-2pt]
%%%%%%%
\sum_{k=1}^{N-1} z^t_{ik}&
=\sum_{k=1}^{N-1} z_{ik}-t \sum_{k=1}^{N-1} \nabla_{ik}\mathcal{F}(Z)
\leq \begin{cases}
\displaystyle\dfrac12\left(x_i + \sum_{k=1}^{N-1} z_{ik} \right) & \text{if}\ \displaystyle  \sum_{k=1}^{N-1} \nabla_{ik}\mathcal{F}(Z)<0, \\
\displaystyle \sum_{k=1}^{N-1} z_{ik} &\text{otherwise},
\end{cases}\\[-2pt]
%%%%%%%%%%
\sum_{k=1}^{N-1} z^t_{kj}&
=\sum_{k=1}^{N-1} z_{kj}-t \sum_{k=1}^{N-1} \nabla_{kj}\mathcal{F}(Z)
\leq \begin{cases}
\displaystyle\dfrac12\left(y_j + \sum_{k=1}^{N-1} z_{kj} \right) & \text{if}\ \displaystyle  \sum_{k=1}^{N-1} \nabla_{kj}\mathcal{F}(Z)<0, \\
\displaystyle \sum_{k=1}^{N-1} z_{kj} &\text{otherwise},
\end{cases}
%%%
\end{align*}
implying $Z^t \in O$.
Since $\mathcal{F}$ is strictly convex on $O$, 
$H(Z) \geq 0$ with equality if and only if  $\nabla \mathcal{F}(Z)=0$.
\end{proof}
%%%%%%%%%%%%%
We compute that
\begin{align*}
\frac{d^2}{dt^2}\mathcal{F}(Z-t\nabla \mathcal{F}(Z))
&= \frac{\partial^2\mathcal{F}}{\partial z_{ij}\partial z_{kl}}(Z-t\nabla \mathcal{F}(Z))
    \cdot \nabla_{ij}\mathcal{F}(Z) \cdot  \nabla_{kl}\mathcal{F}(Z),\\
 %%%%
\frac{\partial^2\mathcal{F}}{\partial z^{ij}\partial z^{kl}}
&=  
\gamma\big( f''(\phi_{ij} )\delta_{ik}\delta_{jl}   +f'(\phi_{NN} )+f''(\phi_{iN} ) \delta_{ik}+ f''(\phi_{Nj} )\delta_{jl}\big).
\end{align*}
%%%%%%
\begin{theorem}\label{descent}
%%%%%%%%%%%%%%%%%%%%
With the same assumptions and notation as in Lemma~\ref{epsilon}, 
define a sequence $(Z^n)_{n\in \mathbb{N}}\subset O$ inductively as follows.:
Let $Z^1:=\Phi^{-1}(x\otimes y)=( x_i y_j)_{1\leq i,j \leq N-1}$.
If $Z^n\in O$ has been defined, let  $Z^{n+1}:=Z^n$ if $\nabla \mathcal{F}(Z^n)=0$, 
and otherwise let 
\[
Z^{n+1}:=Z^n-t_n \nabla \mathcal{F}(Z^n), \quad \text{where}\ t_n:=\min\left\{\frac{\| \nabla \mathcal{F}(Z^n)\|_2^2}{H(Z^n)}, \varepsilon(Z^n)\right\}.
\]
Then  $Z^\infty:=\lim_{n\to \infty} Z^n$ exists and $\Phi(Z^\infty)$  is  an $(f,\gamma)$-optimal coupling between $x,y$.
\end{theorem}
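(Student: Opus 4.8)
I would run the standard \emph{sufficient‑decrease plus Fej\'er‑monotonicity} analysis of gradient descent, treating $\mathcal{F}$ as a strictly convex $C^2$ function on the open convex set $O\subset\mathbb{R}^{(N-1)^2}$ and $\nabla\mathcal{F}$ as its ordinary coordinate gradient; the extra work is caused by the fact that $O$ is open and $\nabla\mathcal{F}$ blows up along $\partial O$ (because $f'(0)=-\infty$), while the minimizer lies in $O$.

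\emph{Step 1 (sufficient decrease).} First I would prove
\[
\mathcal{F}(Z^{n+1})\le \mathcal{F}(Z^n)-\frac{t_n}{2}\,\|\nabla\mathcal{F}(Z^n)\|_2^2\qquad(n\in\mathbb{N}).
\]
If $\nabla\mathcal{F}(Z^n)\neq0$ this follows from Taylor's formula applied to $t\mapsto\mathcal{F}(Z^n-t\nabla\mathcal{F}(Z^n))$ on $[0,\varepsilon(Z^n)]$ (the segment stays in $O$ by Lemma~\ref{epsilon}), bounding the second derivative by $H(Z^n)$ and using $t_n\le\|\nabla\mathcal{F}(Z^n)\|_2^2/H(Z^n)$; for $\nabla\mathcal{F}(Z^n)=0$ it is trivial since then $Z^{n+1}=Z^n$. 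Because $\mathcal{F}$ is continuous on the compact set $\overline O$, it is bounded below, so $(\mathcal{F}(Z^n))_n$ is nonincreasing and convergent, $\sum_n t_n\|\nabla\mathcal{F}(Z^n)\|_2^2<\infty$, and, since $t_n\le\varepsilon(Z^n)\le1$, $\|Z^{n+1}-Z^n\|_2^2\le t_n\|\nabla\mathcal{F}(Z^n)\|_2^2\to0$.

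\emph{Step 2 (target point and Fej\'er monotonicity).} Let $P^\gamma$ be the $(f,\gamma)$-coupling between $x,y$. By Lemma~\ref{int} (here $f'(0)=-\infty$ and $|\supp x|=|\supp y|=N$) all entries of $P^\gamma$ are positive, so $Z_*:=\Phi^{-1}(P^\gamma)\in O$, and $Z_*$ is the unique minimizer of $\mathcal{F}$ on $\overline O$ with $\nabla\mathcal{F}(Z_*)=0$. Convexity of $\mathcal{F}$ gives $\langle\nabla\mathcal{F}(Z^n),Z^n-Z_*\rangle\ge\mathcal{F}(Z^n)-\mathcal{F}(Z_*)\ge\mathcal{F}(Z^n)-\mathcal{F}(Z^{n+1})\ge\frac{t_n}{2}\|\nabla\mathcal{F}(Z^n)\|_2^2$, whence $\|Z^{n+1}-Z_*\|_2\le\|Z^n-Z_*\|_2$. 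Thus $(Z^n)_n$ stays in a compact subset of $\overline O$, $\|Z^n-Z_*\|_2$ decreases to some $\rho\ge0$, and the set $L$ of limit points of $(Z^n)_n$ is nonempty, compact and connected (connectedness from $\|Z^{n+1}-Z^n\|_2\to0$). For $\bar Z\in L\cap O$ I would argue: on a closed ball around $\bar Z$ inside $O$ the explicit formulae for $H$ and $\varepsilon$ together with $f\in C^2((0,1])$ show that $H$ is bounded above and $\varepsilon$ bounded below there, so if $\nabla\mathcal{F}(\bar Z)\neq0$ then $t_{n(k)}$ is bounded below along a subsequence $Z^{n(k)}\to\bar Z$, contradicting $t_n\|\nabla\mathcal{F}(Z^n)\|_2^2\to0$; hence $\nabla\mathcal{F}(\bar Z)=0$, and by strict convexity of $\mathcal{F}$ on $O$, $\bar Z=Z_*$. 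In particular, once $L\cap\partial O=\emptyset$ is known, $L=\{Z_*\}$ and $Z^\infty=Z_*$, so $\Phi(Z^\infty)=P^\gamma$, finishing the proof.

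\emph{Step 3 (ruling out boundary limit points --- the main obstacle).} Suppose $\bar Z\in L\cap\partial O$ and $Z^{n(k)}\to\bar Z$. Using that $\Phi$ is affine one computes
\[
\langle\nabla\mathcal{F}(Z),Z_*-Z\rangle=\langle C,P^\gamma-\Phi(Z)\rangle+\gamma\sum_{k,l=1}^{N}\big(f'(\phi_{kl}(Z))-f'(x_ky_l)\big)\big(\phi_{kl}(Z_*)-\phi_{kl}(Z)\big),
\]
and as $Z\to\bar Z$ every summand with $\phi_{kl}(\bar Z)=0$ tends to $-\infty$, because $f'(\phi_{kl}(Z))\to f'(0)=-\infty$ while $\phi_{kl}(Z_*)=(P^\gamma)_{kl}>0$ (Lemma~\ref{int}); hence $\langle\nabla\mathcal{F}(Z^{n(k)}),Z_*-Z^{n(k)}\rangle\to-\infty$ and, by Cauchy--Schwarz, $\|\nabla\mathcal{F}(Z^{n(k)})\|_2\to\infty$. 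The crux is to turn this ``infinite inward pull'' into a contradiction with $\|Z^{n+1}-Z^n\|_2\to0$, i.e.\ to show that the actual step performed at $Z^{n(k)}$ has length bounded below, $t_{n(k)}\|\nabla\mathcal{F}(Z^{n(k)})\|_2\not\to0$. I expect this to be the hard part: the components of $\nabla\mathcal{F}(Z^{n(k)})$ that explode are exactly those carrying the barrier terms $f'(\phi_{kl})$, and these are precisely the ones that force $\varepsilon(Z^{n(k)})$ to be small; combining the explicit formulae for $\varepsilon_{ij},\varepsilon^i,\varepsilon_j$ with $\|\nabla\mathcal{F}(Z)\|_2\ge|\nabla_{ij}\mathcal{F}(Z)|$ (and the analogous bound for $\sum_k\nabla_{ik}\mathcal{F}(Z)$), a case analysis on which face of $\partial O$ is being approached should give $t_{n(k)}\|\nabla\mathcal{F}(Z^{n(k)})\|_2\ge c>0$ in terms of the entries/margins of $\Phi(Z^{n(k)})$ that stay away from their limiting zero; one also has to check this survives when the active branch of the minimum defining $t_{n(k)}$ is $\|\nabla\mathcal{F}(Z^{n(k)})\|_2^2/H(Z^{n(k)})$. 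Granting this, $\|Z^{n(k)+1}-Z^{n(k)}\|_2=t_{n(k)}\|\nabla\mathcal{F}(Z^{n(k)})\|_2$ is bounded below, contradicting Step 1; therefore $L\cap\partial O=\emptyset$, and by Step 2 the sequence $(Z^n)_n$ converges to $Z_*$ with $\Phi(Z^\infty)=P^\gamma$.
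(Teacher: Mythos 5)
Your Steps 1 and 2 are correct (the sufficient-decrease estimate is exactly the paper's starting point, and the Fej\'er-monotonicity/limit-point machinery you add on top is valid, if not strictly needed). The problem is Step 3, which you yourself identify as the crux and only sketch: the inequality you propose to establish there, namely $t_{n(k)}\|\nabla\mathcal{F}(Z^{n(k)})\|_2\ge c>0$ along a subsequence approaching $\partial O$, is not only unproven but appears to be false in general. Take $f(r)=r\log r$ and a subsequence along which a single entry $\phi_{ij}$ of the coupling tends to $0$ while the others stay bounded away from $0$. Then $\|\nabla\mathcal{F}(Z)\|_2\sim\gamma|f'(\phi_{ij})|=\gamma|\log\phi_{ij}+1|$, while already the value at $t=0$ of the second derivative gives $H(Z)\ \ge\ \gamma f''(\phi_{ij})\big(\nabla_{ij}\mathcal{F}(Z)\big)^2\ \sim\ \gamma^{3}\,\phi_{ij}^{-1}(\log\phi_{ij})^{2}$, so when the branch $\|\nabla\mathcal{F}\|_2^2/H$ of $t_n$ is active the step length satisfies $t_n\|\nabla\mathcal{F}(Z^n)\|_2\lesssim\|\nabla\mathcal{F}(Z^n)\|_2^{3}/H(Z^n)\sim\phi_{ij}|\log\phi_{ij}|\to 0$. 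Hence the ``infinite inward pull'' does not force steps of length bounded below, and no contradiction with $\|Z^{n+1}-Z^{n}\|_2\to0$ can be extracted along the route you describe; the boundary case is left genuinely open.

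The paper resolves the boundary issue by a different mechanism. It splits on whether $\liminf_n\|\nabla\mathcal{F}(Z^n)\|_2$ is zero or positive. If it is zero, a subsequence converges to the minimizer and the monotone decrease of $\mathcal{F}$ together with strict convexity upgrades this to convergence of the whole sequence. If it is positive, the summability of $t_n\|\nabla\mathcal{F}(Z^n)\|_2^2$ forces $\sum_n t_n$ to be finite, and Cauchy--Schwarz shows $(Z^n)$ is Cauchy, so the \emph{entire} sequence converges to some $Z^\infty$; interior limits are excluded by the lower bound on step sizes there (as in your Step 2), so $Z^\infty\in\partial O$, and the contradiction is then obtained not from a step-length bound but from the \emph{sign} of the exploding gradient component: if $\phi_{ij}(Z^\infty)=0$ with the relevant margins positive, then $\nabla_{ij}\mathcal{F}(Z^n)\to-\infty$, so eventually $z^{n+1}_{ij}=z^{n}_{ij}-t_n\nabla_{ij}\mathcal{F}(Z^n)>z^{n}_{ij}$, i.e.\ that coordinate is eventually increasing, contradicting $z^n_{ij}\to0$. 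If you wish to keep your limit-point framework, you would need this kind of sign/monotonicity argument (and something playing the role of full-sequence convergence so that the coordinate actually tends to $0$), rather than a lower bound on $t_n\|\nabla\mathcal{F}(Z^n)\|_2$.
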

%%%
\begin{proof}
It is enough to show the case that $\nabla \mathcal{F}(Z^n) \neq 0$ for any $n\in \mathbb{N}$.
The Taylor expansion of $\mathcal{F}$ implies 
\begin{align*}
\mathcal{F}(Z^{n+1})-\mathcal{F}(Z^n) 
&\leq
-t_n  \langle \nabla \mathcal{F}(Z^n), \nabla \mathcal{F}(Z^n)  \rangle 
+
\frac{t_n^2}{2} \max_{t\in [0, t_n]} \frac{d^2}{dt^2}\mathcal{F}(Z^n-t \nabla \mathcal{F}(Z^n))\\
&\leq -t_n \|\nabla \mathcal{F}(Z^n)\|_2^2 +\frac{t_n^2}{2} H(Z^n)
\leq -\frac{t_n}2\|\nabla \mathcal{F}(Z^n)\|_2^2<0.
\end{align*}
Thus $(\mathcal{F} (Z^n))_{n\in \mathbb{N}}$ is a strictly decreasing sequence.
By
\[
\inf_{n\in \mathbb{N}} \mathcal{F} (Z^n)
\geq \mathcal{F}(\Phi^{-1}(P^\gamma)),
\]
where $P^\gamma$  is  an $(f,\gamma)$-optimal coupling between $x,y$,
the limit $\mathcal{F}^\infty:=\lim_{n\to \infty} \mathcal{F}(Z^n)$ exists. 
%%%%%
Then
\begin{equation}\label{cauchy}
0=\lim_{n,L\to \infty}\left|\mathcal{F}(Z^{n+L})-\mathcal{F}(Z^n)\right|
\geq 
\lim_{n,L\to \infty} \sum_{l=0}^{L-1}\frac{t_{n+l}}2\|\nabla \mathcal{F}(Z^{n+l})\|_2^2.
\end{equation}
\if0
In particular, 
\[
\lim_{n\to \infty}\frac{t_{n}}2\|\nabla \mathcal{F}(Z^{n})\|_2^2=0.
\]
\fi
If $\liminf_{n\to \infty}\|\nabla \mathcal{F}(Z^{n})\|_2=0$, 
then there exists a subsequence of $(Z^{n})$ converging to $P^\gamma$.
This implies that $\mathcal{F}^\infty=0$, hence $\lim_{n\to \infty}Z^{n}=P^\gamma$.

Assume $\liminf_{n\to \infty}\|\nabla \mathcal{F}(Z^{n})\|_2\in (0,\infty]$. 
Then  it follows from \eqref{cauchy} that 
%%%%%%
\begin{align*}
\lim_{n,L\to \infty}\sum_{l=0}^{L-1}t_{n+l}&=0,\\
%%%
\|Z^{n+L}-Z^{n}\|_2
&=
\left\| \sum_{l=0}^{L-1} t_{n+l} \nabla \mathcal{F}(Z^{n+l})  \right\|_2
\leq  \sum_{l=1}^{L} t_{n+l}\left\| \nabla \mathcal{F}(Z^{n+l})\right\|_2\\
&\leq  \left(\sum_{l=0}^{L-1} t_{n+l}\right)^{\frac12}\left(  \sum_{l=0}^{L-1} t_{n+l}\left\| \nabla \mathcal{F}(Z^{n+l})\right\|_2^2\right)^{\frac12}
\xrightarrow{n,L\to \infty}0,
\end{align*}
hence $Z^{\infty}:=\lim_{n\to \infty}Z^{n}\in \overline{O}$ exists.
By the assumption that $\liminf_{n\to \infty}\|\nabla \mathcal{F}(Z^{n})\|_2^2\neq 0$,
$\Phi(Z^\infty)\neq P^\gamma$ holds.
Since all of $\varepsilon, H, \nabla \mathcal{F}:O \to \mathbb{R}$ are continuous, 
if $Z^\infty\in O$,  then
\[
\inf_{n\in \mathbb{N}} t_n:=\inf_{n\in \mathbb{N}} \min\left\{\frac{\|\nabla \mathcal{F}(Z^n)\|_2^2}{H(Z^n)}, \varepsilon(Z^n)\right\}
>0,
\]
which is a contradiction to  $\lim_{n\to \infty} t_n=0$.
Thus $Z^\infty\in \partial O$ holds, that is, there exists $1\leq i,j\leq N$ such that $\phi_{ij}(Z^\infty)=0$.
If $1\leq i,j\leq N-1$ and $\phi_{iN}(Z^\infty),\phi_{Nj}(Z^\infty)>0$,
then 
\[
\lim_{n\to \infty} \nabla_{ij}\mathcal{F}(Z^n)=-\infty,
\]
consequently $(z_{ij}^n=\phi_{ij}(Z^n))_{n\in \mathbb{N}}$ is an increasing sequence,
which is a contradiction to $\phi_{ij}(Z^\infty)=0$.
The other cases are similar.
Thus $\lim_{n\to \infty}\|\nabla \mathcal{F}(Z^{n})\|_2^2 =0$ and $Z^\infty =P^\gamma$ follow.
This completes the proof of the theorem.
\end{proof}
%
%%%%%%%%%
In Theorem~\ref{descent}, we consider a gradient descent of $\mathcal{F}$ in $O$.
One can give a similar discussion for a gradient descent of $\mathcal{F}^\gamma_{x,y}$ in $M$.
To do this,  we identify the tangent space $T_{\Pi} M$ at $\Pi \in M$ with  $M_{N-1}(\mathbb{R})$ by a natural isomorphism 
\[
\sum_{i,j=1}^{N-1}  \zeta^{ij}{\partial_{ij}}\bigg|_{\Pi} \in T_{\Pi} M
\longleftrightarrow
(\zeta^{ij})_{1\leq i,j\leq N-1} \in M_{N-1}(\mathbb{R}).
\]
Then  the gradient of $\mathcal{F}^{\gamma}_{x,y}$, denoted by  $\nabla_g \mathcal{F}^{\gamma}_{x,y}$,  
at $\Phi(Z)\in M$ is identified with 
\begin{align*}
D(Z)=(d^{ij}(Z))_{1\leq i,j\leq N-1}
:&=\left(\sum_{k,l=1}^{N-1}g^{(i,j) (k,l) } \nabla_{ij}\mathcal{F} (Z) \right)_{1\leq i,j\leq N-1},
\end{align*}
where $D(Z)=0$ if and only if $\Phi(Z)$ is an $(f,\gamma)$-coupling between~$x,y$.
Note that 
\begin{align*}
G(Z):=g(\nabla_g \mathcal{F}^{\gamma}_{x,y},\nabla_g \mathcal{F}^{\gamma}_{x,y}) (\Phi(Z))
&=\sum_{i,j,k,l=1}^{N-1} 
g_{(i,j)(k,l)} d^{ij}(Z) d^{kl}(Z) \\
&=\sum_{i,j,k,l=1}^{N-1} 
g^{(i,j)(k,l)}\nabla_{ij} \mathcal{F} (Z) \nabla_{kl} \mathcal{F} (Z).
%%%
\end{align*}
%%%
For a geodesic $\Pi^t$ in $M$ defined by
\[
\Pi^t:=\exp_{\Pi}\left(-t\nabla \mathcal{F}^{\gamma}_{x,y}(\Pi)\right)  =\Phi\left(Z -t D(Z)\right ),
\quad\text{where\ }Z:=\Phi^{-1}(\Pi),
\]
we observe that
\begin{align*}
 \frac{d^2}{dt^2}\mathcal{F}^{\gamma}_{x,y}(\Pi^t)
=\sum_{i,j,k,l=1}^{N-1}
\frac{\partial^2 \mathcal{F}}{\partial z_{ij} \partial z_{kl}}(Z -t D(Z))  
\cdot d^{ij}(Z)
\cdot d^{kl}(Z).
\end{align*}
%%%%%%%%%%
The following corollary is proved in analogy with Theorem~\ref{descent}.
\begin{corollary}\label{cordescent}
With the same assumptions  as in Lemma~\ref{epsilon}, 
for $1\leq i,j \leq N-1$, 
define  functions $\epsilon_{ij},  \epsilon^i, \epsilon_j, \epsilon: O\to \mathbb{R}$ by
%%%%
%%%%
\begin{align*}
\epsilon_{ij}(Z)
:&=\begin{cases}
\dfrac{z_{ij}}{2d^{ij}(Z)}& \text{if}\ d^{ij}(Z)>0,\\
1&\text{otherwise},
\end{cases}\\
%%%%
\epsilon^{i}(Z)
:&=\begin{cases}
 \dfrac{x_i-\sum_{k=1}^{N-1}z_{ik}}{-2 \sum_{k=1}^{N-1} d^{ik}(Z)}&\displaystyle  \text{if}\ \sum_{k=1}^{N-1} d^{ik}(Z)<0,\\
1&\text{otherwise},
\end{cases}\\
%%%
\epsilon_j(Z)
:&=\begin{cases}
\displaystyle \dfrac{y_j-\sum_{k=1}^{N-1}z_{kj}}{- 2\sum_{k=1}^{N-1} d^{kj}(Z)}&\displaystyle  \text{if}\ \sum_{k=1}^{N-1} d^{kj}(Z)<0,\\
1&\text{otherwise},
\end{cases}\\
%%%%
\epsilon(Z):&=\min_{1\leq i,j\leq N}\{\epsilon^i(Z), \epsilon_j(Z),\epsilon_{ij}(Z) \},
\end{align*}
%%%%
respectively.
Then  $Z-t D(Z)\in O$ for $t\in[0,\epsilon(Z)]$
and
\[
D_2(Z)
:=\max_{t\in [0, \epsilon(Z)]} 
 \frac{d^2}{dt^2}\mathcal{F}^{\gamma}_{x,y}(\Phi(Z-t D(Z))   ) \geq 0
\]
with equality if and only if $D(Z)=0$.

Furthermore,  define $(Z^n)_{n\in \mathbb{N}}\subset O$ inductively as follows.:
Let $Z^1:=\Phi^{-1}(x\otimes y)$.
If $Z^n\in O$ has been defined, let  $Z^{n+1}:=Z^n$ if $D(Z^n)=0$, 
and otherwise let 
\[
Z^{n+1}:=Z^n-t_n D(Z^n), \quad \text{where}\ t_n:=\min\left\{\frac{G(Z^n)}{D_2(Z^n)}, \epsilon(Z^n)\right\}.
\]
Then  $Z^\infty:=\lim_{n\to \infty} Z^n$ exists and $\Phi(Z^\infty)$  is  an $(f,\gamma)$-optimal coupling between $x,y$.
\end{corollary}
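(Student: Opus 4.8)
The plan is to transcribe the proof of Theorem~\ref{descent} almost verbatim, replacing the Euclidean gradient $\nabla\mathcal{F}$ of $\mathcal{F}$ on $O$ by the field $D=(d^{ij})$, the squared gradient norm $\|\nabla\mathcal{F}\|_2^2$ by $G$, the Hessian bound $H$ by $D_2$, and the admissible step $\varepsilon$ by $\epsilon$; the single genuinely new point is that $G$ and $\|D\|_2^2$ remain comparable. First, the assertions preceding the construction of $(Z^n)_{n\in\mathbb{N}}$ — that $Z-tD(Z)\in O$ for $t\in[0,\epsilon(Z)]$, and $D_2(Z)\ge 0$ with equality iff $D(Z)=0$ — are proved exactly as in Lemma~\ref{epsilon}: the componentwise estimates for $z_{ij}^t$, $\sum_k z_{ik}^t$ and $\sum_k z_{kj}^t$ there used only the form of $\epsilon_{ij},\epsilon^i,\epsilon_j$, which is unchanged with $d^{ij}$ in place of $\nabla_{ij}\mathcal{F}$; and $D_2(Z)\ge 0$ with the stated equality case holds because $t\mapsto\Phi(Z-tD(Z))$ is a geodesic in $(M,g)$ (as $M$ is totally geodesic and $t\mapsto Z-tD(Z)$ is affine) and $\mathcal{F}^{\gamma}_{x,y}$ is strictly convex along any nonconstant geodesic, so the second derivative is positive unless the geodesic is constant, i.e.\ unless $D(Z)=0$.

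Next, since $d^{ij}(Z)=\sum_{k,l}g^{(i,j)(k,l)}\nabla_{kl}\mathcal{F}(Z)$ and $G(Z)=\sum_{i,j,k,l}g^{(i,j)(k,l)}\nabla_{ij}\mathcal{F}(Z)\nabla_{kl}\mathcal{F}(Z)$, one has $\frac{d}{dt}\big|_{t=0}\mathcal{F}^{\gamma}_{x,y}(\Phi(Z-tD(Z)))=-\sum_{i,j}\nabla_{ij}\mathcal{F}(Z)\,d^{ij}(Z)=-G(Z)$, so a second-order Taylor expansion of $\mathcal{F}=\mathcal{F}^{\gamma}_{x,y}\circ\Phi$ along the geodesic, combined with $t_n\le G(Z^n)/D_2(Z^n)$, gives $\mathcal{F}(Z^{n+1})-\mathcal{F}(Z^n)\le -t_nG(Z^n)+\tfrac{t_n^2}{2}D_2(Z^n)\le-\tfrac{t_n}{2}G(Z^n)<0$. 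Hence $(\mathcal{F}(Z^n))_n$ strictly decreases, is bounded below by $\mathcal{F}(\Phi^{-1}(P^\gamma))$ for an $(f,\gamma)$-coupling $P^\gamma$ between $x,y$, and so converges; telescoping yields $\lim_{n,L\to\infty}\sum_{l=0}^{L-1}t_{n+l}G(Z^{n+l})=0$, in particular $t_n\to0$. Since in the chart $\Phi^{-1}|_M$ the metric $g$ has constant coefficients, there is $\kappa=\kappa(N)>0$ with $\kappa^{-1}\|D(Z)\|_2^2\le G(Z)\le\kappa\|D(Z)\|_2^2$ on $O$; in particular $G$ and $\|D\|_2$ vanish simultaneously, namely only at $\Phi^{-1}(P^\gamma)$.

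Finally the dichotomy, mirroring Theorem~\ref{descent}. If $\liminf_n G(Z^n)=0$, a subsequence of $(Z^n)$ converges — by the boundary analysis below, to a point of $O$, hence to $\Phi^{-1}(P^\gamma)$ — so $\mathcal{F}(Z^n)\downarrow\mathcal{F}(\Phi^{-1}(P^\gamma))$, and $\mathcal{F}$ being continuous on the compact $\overline{O}$ and strictly convex on $O$ with unique minimizer $\Phi^{-1}(P^\gamma)$, the whole sequence converges to $\Phi^{-1}(P^\gamma)$. If $\liminf_n G(Z^n)>0$, then $\sum_l t_{n+l}\to0$ and, by Cauchy--Schwarz and the comparison above, $\|Z^{n+L}-Z^n\|_2\le(\sum_l t_{n+l})^{1/2}(\kappa\sum_l t_{n+l}G(Z^{n+l}))^{1/2}\to0$, so $Z^\infty:=\lim_n Z^n\in\overline{O}$ exists with $\Phi(Z^\infty)\ne P^\gamma$; if $Z^\infty\in O$ then $\epsilon,D_2,G$ are continuous and positive near $Z^\infty$, forcing $\inf_n t_n>0$, contradicting $t_n\to0$, so $Z^\infty\in\partial O$. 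As in the proof of Theorem~\ref{descent} this is impossible: if $\phi_{ij}(Z^\infty)=0$ with $1\le i,j\le N-1$ while $\phi_{iN}(Z^\infty),\phi_{Nj}(Z^\infty)>0$, then among the $\nabla_{kl}\mathcal{F}(Z^n)$ only $\nabla_{ij}\mathcal{F}(Z^n)\to-\infty$, so the double-centering formula gives $d^{ij}(Z^n)=(1-\tfrac1N)^2\nabla_{ij}\mathcal{F}(Z^n)+O(1)\to-\infty$, whence $(\phi_{ij}(Z^n))_n$ is eventually nondecreasing, contradicting $\phi_{ij}(Z^\infty)=0$; the remaining boundary configurations are handled as in Theorem~\ref{descent}. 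Thus $G(Z^n)\to0$ and $\Phi(Z^\infty)$ is the $(f,\gamma)$-coupling $P^\gamma$, as claimed.

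The step I expect to need the most care is the exclusion of $Z^\infty\in\partial O$. In Theorem~\ref{descent} the $(i,j)$-component of the update direction is literally $\nabla_{ij}\mathcal{F}$, which tends to $-\infty$ as $\phi_{ij}\to0$; here it is the double-centered quantity $d^{ij}=\sum_{k,l}g^{(i,j)(k,l)}\nabla_{kl}\mathcal{F}$, so one must check that the $-\infty$ contributions of the near-zero entries survive the double-centering at a suitable coordinate, and organize the case split (a boundary contact in the $I\times J$ block, in the last row or column of $\Phi$, or at a tightened marginal slack, possibly several at once) so that in each case some coordinate of $Z^n$ is eventually monotone. All the other ingredients are a faithful transcription of Lemma~\ref{epsilon} and Theorem~\ref{descent}.
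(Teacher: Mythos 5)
Your proposal is correct and takes essentially the same route as the paper, which offers no separate argument and simply declares that the corollary ``is proved in analogy with Theorem~\ref{descent}'': your transcription with $D$, $G$, $D_2$, $\epsilon$ in place of $\nabla\mathcal{F}$, $\|\nabla\mathcal{F}\|_2^2$, $H$, $\varepsilon$, together with the first-variation identity $\tfrac{d}{dt}\big|_{t=0}\mathcal{F}(Z-tD(Z))=-G(Z)$ and the two-sided comparison of $G(Z)$ with $\|D(Z)\|_2^2$ coming from the constant metric coefficients, is exactly that analogy. The delicacy you flag in the boundary-exclusion step (that the $-\infty$ divergence must survive the double-centering when several entries of $\Phi(Z^n)$ degenerate at once) is a fair observation, but your treatment is already at least as detailed as the paper's, whose proof of Theorem~\ref{descent} itself dismisses the analogous configurations with ``the other cases are similar.''
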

%%%%%%

\qquad

\begin{ack}
The author would like to express her gratitude to Jun Kitagawa for his suggestions and discussions.
She also thanks Han Bao and Sho Sonoda for their useful comments.

The author was supported in part by KAKENHI 19K03494, 19H01786.
\end{ack}

\qquad
%%%%%%%%%%%%%%%%%%%%%%%%%%%%%%%%%%%%%
\begin{bibdiv}
 \begin{biblist}
\bib{CDPS}{article}{
   author={Carlier, Guillaume},
   author={Duval, Vincent},
   author={Peyr\'{e}, Gabriel},
   author={Schmitzer, Bernhard},
   title={Convergence of entropic schemes for optimal transport and gradient
   flows},
   journal={SIAM J. Math. Anal.},
   volume={49},
   date={2017},
   number={2},
   pages={1385--1418},
  }
\bib{Cuturi}{article}{
   author={Cuturi, Marco},
title={Sinkhorn distances: lightspeed computation of optimal transport},
journal={In Advances in Neural Information Processing Systems},
volume={26},
date={2013},
pages={2292--2300},
}
\bib{DPR}{article}{
   author={Dessein, Arnaud},
   author={Papadakis, Nicolas},
   author={Rouas, Jean-Luc},
   title={Regularized optimal transport and the rot mover's distance},
   journal={J. Mach. Learn. Res.},
   volume={19},
   date={2018},
   pages={Paper No. 15, 53},
}
\bib{PC}{article}{
   author={Peyr\'{e}, Gabriel},
   author={Cuturi, Marco},
title={Computational Optimal Transport},
journal={Foundations and Trends in Machine Learning},
volume={11},
date={2019},
number={5-6},
pages={355--607},
}
%%%%
\bib{sinkhorn64}{article}{
   author={Sinkhorn, Richard},
   title={A relationship between arbitrary positive matrices and doubly
   stochastic matrices},
   journal={Ann. Math. Statist.},
   volume={35},
   date={1964},
   pages={876--879},
} 
%%%%%  
\bib{sinkhorn}{article}{
   author={Sinkhorn, Richard},
   title={Diagonal equivalence to matrices with prescribed row and column
   sums},
   journal={Amer. Math. Monthly},
   volume={74},
   date={1967},
   pages={402--405},
}
\bib{Vi}{book}{
   author={Villani, C\'{e}dric},
   title={Optimal transport},
   series={Grundlehren der Mathematischen Wissenschaften [Fundamental
   Principles of Mathematical Sciences]},
   volume={338},
   note={Old and new},
   publisher={Springer-Verlag, Berlin},
   date={2009},
   pages={xxii+973},
}
 \end{biblist}
\end{bibdiv}
\end{document}